\documentclass[11pt]{amsart}
\usepackage{amsmath,amssymb,amsthm,mathrsfs}
\usepackage[margin=1.15in]{geometry}
\usepackage[hidelinks]{hyperref}
\hypersetup{
  pdftitle={Subgroup measures and profinite rigidity in free groups},
  pdfauthor={Shrinit Singh},
  pdfkeywords={word measures, subgroup measures, profinite rigidity, free groups, free products}
}

\theoremstyle{plain}
\newtheorem{theorem}{Theorem}[section]
\newtheorem{proposition}[theorem]{Proposition}
\newtheorem{lemma}[theorem]{Lemma}
\newtheorem{corollary}[theorem]{Corollary}
\theoremstyle{definition}
\newtheorem{definition}[theorem]{Definition}
\newtheorem{remark}[theorem]{Remark}
\newtheorem{example}[theorem]{Example}
\newtheorem{question}[theorem]{Question}

\newcommand{\acl}{\operatorname{acl}}
\newcommand{\rk}{\operatorname{rk}}
\newcommand{\Hom}{\operatorname{Hom}}
\newcommand{\Epi}{\operatorname{Epi}}
\newcommand{\Aut}{\operatorname{Aut}}
\newcommand{\Stab}{\operatorname{Stab}}
\newcommand{\ImEpi}{\operatorname{ImEpi}}
\newcommand{\MP}{\operatorname{MP}}
\newcommand{\Zhat}{\widehat{\mathbb{Z}}}
\newcommand{\Fhat}{\widehat{F}}
\newcommand{\fii}{\trianglelefteq_{\mathrm{f.i.}}}

\title[Subgroup measures and profinite rigidity]
{Subgroup measures and profinite rigidity in free groups}
\author{Shrinit Singh}
\address{International Centre for Theoretical Sciences, Tata Institute of
Fundamental Research, Bengaluru 560089, India}
\email{shrinit.singh@icts.res.in}
\subjclass[2020]{20E05, 20E18, 20E26, 20F28}
\keywords{word measures, subgroup measures, profinite rigidity, free groups,
free products}

\begin{document}

\begin{abstract}
Let $H$ be a finitely generated subgroup of a finitely generated group $F$.
Restriction of a uniformly random homomorphism $F\to G$, with $G$ finite,
defines a probability measure on $\Hom(H,G)$; for cyclic $H$ this is the usual
word measure.  We introduce marked and setwise notions of profinite rigidity
for these subgroup measures.  In a finitely generated LERF group, an
isomorphism between finitely generated subgroups preserves all induced
measures if and only if its profinite completion extends to an automorphism of
the ambient profinite completion.  This gives a discrete--profinite orbit
criterion and identifies the precise obstruction separating marked from
setwise rigidity.  For free groups, both notions are invariant under adjoining
a free factor.  We classify the measure classes of subgroups of $F_n$
containing $[F_n,F_n]$: they are the $\Aut(F_n)$-orbits determined by Smith
invariants, and the subgroups determined uniquely by their measures are
exactly $F_n^{\,m}[F_n,F_n]$.  Finally, marked rigidity is preserved and
reflected by free products compatible with an ambient free-product
decomposition.  Consequently, for every $n\ge2$ and $1\le r\le n$, there is
an infinite-index profinitely rigid subgroup of $F_n$ of rank $r$ whose
algebraic closure is $F_n$.
\end{abstract}

\maketitle

\section{Introduction}\label{sec:intro}

Word maps form a classical interface between combinatorial group theory, finite
group theory, algebraic groups, and profinite methods. Traditionally one asks
which elements occur as values of a given word. A finer invariant is obtained
by retaining the full distribution of those values: for a finite group, one
evaluates the word on a uniformly random tuple and records the resulting
probability measure. These \emph{word measures} can detect algebraic features
that are invisible from the image of the word map alone, and they are naturally
suited to questions governed by finite quotients.

\subsection{Word maps and their induced measures}\label{sec:words}

Let $F_n=\langle x_1,\dots,x_n\rangle$ be a free group on $x_1,\dots,x_n$. For a word
\[
  w=\prod_{j=1}^{m}x_{i_j}^{\varepsilon_j}\in F_n,
  \qquad i_j\in\{1,\dots,n\},\ \varepsilon_j\in\mathbb Z\setminus\{0\},
\]
the induced word map on a group $G$ is
\[
  w_G\colon G^{n}\longrightarrow G,\qquad
  w_G(g_1,\dots,g_n)=\prod_{j=1}^{m}g_{i_j}^{\varepsilon_j}.
\]
When $G$ is finite, the uniform measure on $G^{n}$ pushes forward along $w_G$
to a probability measure $\mu_w$ on $G$: for $S\subseteq G$,
\[
  \mu_w(S)=\Pr\nolimits_w(S)=\frac{\bigl|w_G^{-1}(S)\bigr|}{|G|^{n}} .
\]
In particular $\mu_w(g)$ is the probability that $w_G$ evaluates to $g\in G$
on uniformly and independently chosen inputs.

The conjecture of Amit--Vishne and Shalev \cite{AmitVishne,Shalev} asserts that
this family of measures determines a word up to the natural action of
$\Aut(F_n)$: if $u,w\in F_n$ induce the same measure on every finite group,
then $u$ and $w$ should lie in the same $\Aut(F_n)$-orbit. Hanany, Meiri, and
Puder \cite{HMP} related this problem to automorphism orbits in the profinite
completion, motivating the terminology \emph{profinite rigidity}.
\begin{definition}\label{def:word-rigid}
A word $w\in F_n$ is \emph{profinitely rigid} if every word $u\in F_n$ that
induces the same probability measure as $w$ on every finite group lies in the
same $\Aut(F_n)$-orbit as $w$.  The word $w$ is \emph{universally profinitely
rigid} if it is profinitely rigid in $F_n*P$ for every finitely generated free
group $P$.
\end{definition}
The conjecture asserts that every word in a finitely generated free group is
profinitely rigid. Jaikin-Zapirain, Souza, and Zalesskii \cite{JSZ} proved that
profinite rigidity of words implies their universal profinite rigidity; we
generalise this to finitely generated subgroups in Theorem~\ref{thm:universal}.

Several results support this picture. In particular, primitive words and free
factors are detected by finite-group statistics \cite{PP,Wilton, GJ}, and further
families of rigid words were obtained in \cite{HMP,Wilton2,AF}. The purpose
of this paper is to extend the measure-theoretic framework from individual
words to arbitrary finitely generated subgroups of free groups.

\subsection{Measures induced by subgroups: two generalisations}\label{sec:two-notions}

The subgroup analogue is obtained by replacing evaluation by restriction. Let
$P$ be a finitely generated group and let $H\le P$ be finitely generated. For
any group $K$, there is a restriction map
\[
  r\colon\Hom(P,K)\longrightarrow\Hom(H,K),\qquad r(f)=f|_H .
\]
When $K$ is finite, pushing forward the uniform distribution on $\Hom(P,K)$
along $r$ gives a probability measure $\mu^P_{H,K}$ on $\Hom(H,K)$. Its mass
at $h\in\Hom(H,K)$ is
\[
  \Pr\nolimits^{P}_{H,K}(h)
  :=\mu^P_{H,K}(h)
  =\frac{\bigl|\{f\in\Hom(P,K)\mid f|_H=h\}\bigr|}
          {|\Hom(P,K)|}.
\]
When the ambient group is fixed or clear from context, we suppress the
superscript $P$. If $P=F_n$ and $H=\langle w\rangle$ is cyclic, evaluation at
$w$ identifies $\mu^P_{H,K}$ with the usual word measure $\mu_w$.

Let $H_1,H_2\le P$ be finitely generated subgroups and let
$\alpha\colon H_1\to H_2$ be an isomorphism. We say that $H_1$ and $H_2$ are
\emph{measure equivalent via $\alpha$} if, for every finite group $K$ and every
$h\in\Hom(H_2,K)$,
\begin{equation}\label{eq:meq}
  \Pr\nolimits_{H_2,K}(h)=\Pr\nolimits_{H_1,K}(h\circ\alpha).
\end{equation}
Equivalently, after identifying $\Hom(H_1,K)$ with $\Hom(H_2,K)$ via $\alpha$,
the pushforward distributions coincide for every finite $K$.

For cyclic subgroups, word rigidity remembers not only the subgroup but also its
distinguished generator. For a general finitely generated subgroup, there are
therefore two natural levels of rigidity. In the marked version, the specific
measure equivalence $\alpha$ must be realised by an ambient automorphism. In
the setwise version, one asks only that the two subgroups lie in the same
ambient automorphism orbit.
\begin{definition}\label{def:rigid}
Let $G$ be a finitely generated group and let $H\le G$ be finitely generated.
We say that $H$ is \emph{profinitely rigid in $G$} if, whenever $L\le G$ is
finitely generated and $\alpha\colon H\to L$ is a measure equivalence, there
exists $\varphi\in\Aut(G)$ such that
$\varphi(H)=L$ and $\varphi|_H=\alpha$.
\end{definition}
\begin{definition}\label{def:setwise}
Let $G$ be a finitely generated group.  A finitely generated subgroup $H\le G$ is
\emph{setwise profinitely rigid in $G$} if, whenever $L\le G$ is finitely
generated and measure equivalent to $H$, the subgroup $L$ lies in the
$\Aut(G)$-orbit of $H$.
\end{definition}
When the distinction matters, we refer to Definition~\ref{def:rigid} as
\emph{marked profinite rigidity}. Marked rigidity plainly implies setwise
rigidity; Theorem~\ref{thm:setwise-marked} identifies the precise additional
condition. Both notions depend on the embedding $H\le G$, not merely on the
abstract group $H$. We therefore also record the version stable under free
extension.
\begin{definition}\label{def:universal-rigid}
Let $F$ be a finitely generated free group and $H\le F$ finitely generated. We
call $H$ \emph{universally profinitely rigid} (respectively, \emph{universally
setwise profinitely rigid}) if $H$ is profinitely rigid (respectively, setwise
profinitely rigid) in $F*P$ for every finitely generated free group $P$.
\end{definition}
Thus ``universal'' refers only to free extensions of the fixed ambient group.
It does not mean rigidity in every finitely generated free group containing
$H$.

Ambient automorphisms preserve the induced measures
(Lemma~\ref{lem:aut}); the rigidity problem asks for converses to this
observation. The cyclic case recovers the original word problem.
\begin{corollary}\label{cor:word-cyclic}
A word $w\in F_n$ is profinitely rigid in the sense of
Definition~\ref{def:word-rigid} if and only if the cyclic subgroup
$\langle w\rangle$ is profinitely rigid in $F_n$ in the sense of
Definition~\ref{def:rigid}.
\end{corollary}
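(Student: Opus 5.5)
The plan is to match the two definitions directly, using the identification of the subgroup measure of a cyclic subgroup with the word measure from \S\ref{sec:two-notions}. For $w\in F_n$ and a finite group $K$, evaluation at the generator $\mathrm{ev}_w\colon\Hom(\langle w\rangle,K)\to K$, $h\mapsto h(w)$, is injective. It is a bijection onto $K$ when $w$ has infinite order, which holds for every $w\neq1$ because $F_n$ is torsion-free. Since $f\mapsto f|_{\langle w\rangle}\mapsto f(w)$ is exactly $w_K$ under $\Hom(F_n,K)\cong K^n$, we get $\mu^{F_n}_{\langle w\rangle,K}(h)=\mu_w(h(w))$. The trivial word I will treat separately: $\langle 1\rangle$ is measure equivalent only to $\langle1\rangle$, since $\mu_u=\delta_1$ on every finite $K$ forces $u=1$ by residual finiteness of $F_n$ (choose $K$ with $u\notin\ker$ and note $\mu_u$ then charges a nontrivial element). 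Both sides of the claimed equivalence therefore hold trivially for $w=1$, and from now on $w\neq1$.

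\emph{Step 1: translating measure equivalences.} Suppose $L\le F_n$ is finitely generated and $\alpha\colon\langle w\rangle\to L$ is an isomorphism. Then $L=\langle u\rangle$ with $u=\alpha(w)$. I will show that $\alpha$ is a measure equivalence exactly when $\mu_u=\mu_w$ on every finite $K$. Indeed, for $h\in\Hom(L,K)$ we have $(h\circ\alpha)(w)=h(u)$, so \eqref{eq:meq} becomes $\mu_u(h(u))=\mu_w(h(u))$. As $h$ ranges over $\Hom(L,K)$, the value $h(u)$ ranges over all of $K$, which gives the claim. Conversely, any $u$ with $\mu_u=\mu_w$ is nontrivial by the residual finiteness remark, so $w\mapsto u$ defines an isomorphism $\alpha_u\colon\langle w\rangle\to\langle u\rangle$, and that isomorphism is a measure equivalence.

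\emph{Step 2: the two implications.} First assume $w$ is profinitely rigid in the sense of Definition~\ref{def:word-rigid}, and let $\alpha\colon\langle w\rangle\to L$ be a measure equivalence. Put $u=\alpha(w)$. By Step~1, $\mu_u=\mu_w$, so there is $\varphi\in\Aut(F_n)$ with $\varphi(w)=u$. The map $\varphi|_{\langle w\rangle}$ agrees with $\alpha$ on the generator, so it equals $\alpha$, and $\varphi(\langle w\rangle)=L$. Conversely, assume $\langle w\rangle$ is profinitely rigid in the sense of Definition~\ref{def:rigid}, and suppose $\mu_u=\mu_w$. Step~1 shows that $\alpha_u$ is a measure equivalence, so some $\varphi$ satisfies $\varphi|_{\langle w\rangle}=\alpha_u$, and hence $\varphi(w)=u$.

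The only delicate point is the marking. An isomorphism $\langle w\rangle\to\langle u\rangle$ could send $w\mapsto u^{-1}$, and one might worry that this loses information. It does not: such an $\alpha$ is simply $\alpha_{u^{-1}}$ with $L=\langle u^{-1}\rangle$, and Step~1 applies verbatim to $u^{-1}$. So no separate argument about inversion is required. The remaining care is in the trivial-word case handled above.
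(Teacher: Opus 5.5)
Your proposal is correct and follows essentially the same route as the paper. You dispose of $w=1$ via residual finiteness, identify $\Hom(\langle w\rangle,K)$ with $K$ by evaluation so that an isomorphism $\alpha$ with $\alpha(w)=u$ is a measure equivalence exactly when $\mu_u=\mu_w$, and then read off both implications directly from the two definitions (your remark on $w\mapsto u^{-1}$ is a harmless addition, since such an $\alpha$ is just the case of the word $u^{-1}$).
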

\begin{proof}
If $w=1$, both sides hold trivially: $\mu_1$ is the point mass at the identity
in every finite group, so $\mu_u=\mu_1$ forces $u=1$ by residual finiteness,
and $\langle1\rangle$ is the only subgroup isomorphic to itself. Assume
$w\neq1$, so $\langle w\rangle\cong\mathbb Z$.

For $u\in F_n\setminus\{1\}$, let $\alpha\colon\langle w\rangle\to\langle
u\rangle$ be the isomorphism with $\alpha(w)=u$. Evaluation at $u$ (resp.\ $w$)
identifies $\Hom(\langle u\rangle,K)$ (resp.\ $\Hom(\langle w\rangle,K)$) with
$K$; writing $h_k$ for $u\mapsto k$,
\[
  \Pr\nolimits_{\langle u\rangle,K}(h_k)=\mu_u(k),
  \qquad
  \Pr\nolimits_{\langle w\rangle,K}(h_k\circ\alpha)=\mu_w(k),
\]
so $\alpha$ is a measure equivalence iff $\mu_w=\mu_u$ on every finite group.

Suppose $w$ is a profinitely rigid word and $\alpha\colon\langle w\rangle\to L$
is a measure equivalence. Then $L=\langle u\rangle$ with $u=\alpha(w)\neq1$ and,
by the above, $\mu_w=\mu_u$; rigidity of $w$ gives $\varphi\in\Aut(F_n)$ with
$\varphi(w)=u$, hence $\varphi|_{\langle w\rangle}=\alpha$ and
$\varphi(\langle w\rangle)=L$. Thus $\langle w\rangle$ is profinitely rigid.

Conversely, suppose $\langle w\rangle$ is profinitely rigid and $u$ induces the
same measures as $w$. Then $u\neq1$ (since $w\neq1$ is detected in some finite
quotient), and $\alpha\colon w\mapsto u$ is a measure equivalence
$\langle w\rangle\to\langle u\rangle$. Rigidity supplies $\varphi\in\Aut(F_n)$
with $\varphi|_{\langle w\rangle}=\alpha$, in particular $\varphi(w)=u$. Thus
$w$ is a profinitely rigid word.
\end{proof}
In this sense, Shalev's conjecture is the rank-one instance of subgroup
profinite rigidity.

\subsection{Main results}\label{sec:main-results}

The main results concern the relation between marked and setwise rigidity, the
behaviour of rigidity under free extension and free products, and explicit
families of rigid subgroups.

\medskip\noindent\textbf{A profinite orbit principle.}
The measure relation itself has an exact profinite interpretation.  If $G$ is
finitely generated and LERF, $H_1,H_2\le G$ are finitely generated, and
$\alpha\colon H_1\to H_2$ is an isomorphism, then $\alpha$ is a measure
equivalence if and only if
\[
  \widehat\alpha\colon\overline{H_1}\longrightarrow\overline{H_2}
\]
extends to an automorphism of $\widehat G$ (Proposition~\ref{prop:lerf}).  In
particular, for finitely generated subgroups of free groups, measure classes are
exactly the intersections of suitable $\Aut(\widehat F)$-orbits with the
discrete group.  Corollary~\ref{cor:orbit-intersection} makes this precise and
recasts both rigidity notions as discrete--profinite orbit-intersection
properties.

\medskip\noindent\textbf{Comparing the two notions.}
For finitely generated subgroup $H\le G$,  write
\[
  \MP_G(H):=\{\gamma\in\Aut(H)\ :\ \gamma\ \text{is a measure equivalence }H\to H\}
\]
for the group of \emph{measure-preserving automorphisms of $H$ relative to
$G$}, and let
\[
  \rho_H\colon \Stab_{\Aut(G)}(H)\longrightarrow\Aut(H),\qquad
  \psi\longmapsto\psi|_H,
\]
be the restriction homomorphism defined on the setwise stabiliser of $H$ in
$\Aut(G)$. One always has $\operatorname{im}\rho_H\subseteq\MP_G(H)$
(Lemma~\ref{lem:aut}).
\begin{theorem}\label{thm:setwise-marked}
Let $G$ be a finitely generated group and let $H\le G$ be finitely generated.
Then $H$ is profinitely rigid in $G$ if and only if
\begin{enumerate}
\item[(i)] $H$ is setwise profinitely rigid in $G$, and
\item[(ii)] $\MP_G(H)=\operatorname{im}\rho_H$.
\end{enumerate}
\end{theorem}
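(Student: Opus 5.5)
The argument is a direct unwinding of the definitions. Besides Lemma~\ref{lem:aut}, the one ingredient is that measure equivalences compose and invert. If $\alpha\colon H_1\to H_2$ and $\beta\colon H_2\to H_3$ are measure equivalences, then for $h\in\Hom(H_3,K)$ we have
\[
\Pr\nolimits_{H_3,K}(h)=\Pr\nolimits_{H_2,K}(h\circ\beta)=\Pr\nolimits_{H_1,K}(h\circ\beta\circ\alpha),
\]
so $\beta\circ\alpha$ is a measure equivalence. For the inverse, substitute $h=h'\circ\alpha^{-1}$ in \eqref{eq:meq}; this shows $\alpha^{-1}$ is a measure equivalence. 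In particular $\MP_G(H)$ is a group. Lemma~\ref{lem:aut} also gives that for $\varphi\in\Aut(G)$ the restriction $\varphi|_{H}\colon H\to\varphi(H)$ is a measure equivalence; the statement already records this as $\operatorname{im}\rho_H\subseteq\MP_G(H)$.

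\emph{Forward direction.} Suppose $H$ is profinitely rigid in $G$.

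For (i), let $L\le G$ be finitely generated and measure equivalent to $H$ via some $\alpha$. Marked rigidity gives $\varphi\in\Aut(G)$ with $\varphi(H)=L$. Hence $L$ lies in the $\Aut(G)$-orbit of $H$.

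For (ii), take $\gamma\in\MP_G(H)$. This is a measure equivalence $H\to H$, so rigidity with $L=H$ supplies $\varphi\in\Aut(G)$ with $\varphi(H)=H$ and $\varphi|_H=\gamma$. Thus $\varphi\in\Stab_{\Aut(G)}(H)$ and $\rho_H(\varphi)=\gamma$. Combined with the reverse inclusion from Lemma~\ref{lem:aut}, this gives equality.

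\emph{Reverse direction.} Assume (i) and (ii), and let $\alpha\colon H\to L$ be a measure equivalence.

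By (i) there is $\psi\in\Aut(G)$ with $\psi(H)=L$. By Lemma~\ref{lem:aut}, $\psi|_H\colon H\to L$ is a measure equivalence. By the composition and inversion facts above, $\gamma:=(\psi|_H)^{-1}\circ\alpha$ is a measure equivalence $H\to H$, so $\gamma\in\MP_G(H)$.

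By (ii), $\gamma=\rho_H(\theta)$ for some $\theta\in\Stab_{\Aut(G)}(H)$. Set $\varphi:=\psi\circ\theta$. Then $\varphi(H)=\psi(H)=L$ and $\varphi|_H=\psi|_H\circ\gamma=\alpha$, which is what marked rigidity requires.

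The only step needing any care is checking that the class of measure equivalences is closed under composition and inversion. This is a one-line reindexing of \eqref{eq:meq}, using that precomposition with an isomorphism is a bijection on $\Hom(\cdot,K)$. Apart from that, and the case $H=1$, which is trivial, I expect no real obstacle.
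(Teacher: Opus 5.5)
Your proposal is correct and follows the paper's proof essentially step for step. In the reverse direction you correct $\alpha$ by $(\psi|_H)^{-1}$ to get an element of $\MP_G(H)$ and then lift it through $\rho_H$. In the forward direction you apply marked rigidity with $L=H$. The one difference is that you re-derive closure under composition and inversion inline, whereas the paper records it separately as Lemma~\ref{lem:groupoid}.
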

Thus the only additional information carried by marked rigidity is the
extendability of measure-preserving automorphisms of $H$. For free factors this
condition is automatic.
\begin{corollary}\label{cor:ff-setwise-marked}
If $H$ is a free factor of a finitely generated free group $F$, then $H$ is
profinitely rigid in $F$ if and only if it is setwise profinitely rigid in $F$.
\end{corollary}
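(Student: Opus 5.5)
By Theorem~\ref{thm:setwise-marked}, it suffices to show that condition (ii) holds automatically when $H$ is a free factor of $F$. That is, every $\gamma\in\MP_F(H)$ is the restriction of an automorphism of $F$ stabilising $H$. In fact we will not need $\gamma$ to be measure preserving, so we will prove the stronger statement that $\rho_H\colon\Stab_{\Aut(F)}(H)\to\Aut(H)$ is surjective. Combined with the inclusion $\operatorname{im}\rho_H\subseteq\MP_F(H)$ from Lemma~\ref{lem:aut}, this gives $\MP_F(H)=\operatorname{im}\rho_H=\Aut(H)$. The equivalence then follows immediately from Theorem~\ref{thm:setwise-marked}: marked rigidity always implies setwise rigidity, and conversely setwise rigidity together with (ii) gives marked rigidity.

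The key step is the surjectivity of $\rho_H$. Since $H$ is a free factor, write $F=H*K$ with $K$ free and finitely generated. Given $\gamma\in\Aut(H)$, define $\varphi\colon F\to F$ by the universal property of free products, setting $\varphi|_H=\gamma$ and $\varphi|_K=\mathrm{id}_K$. The map $\gamma^{-1}*\mathrm{id}_K$ is a two-sided inverse, so $\varphi\in\Aut(F)$. Moreover $\varphi(H)=\gamma(H)=H$, so $\varphi$ lies in the setwise stabiliser, and $\rho_H(\varphi)=\gamma$.

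I expect no real obstacle here. The only point needing care is to confirm that Theorem~\ref{thm:setwise-marked} is being applied with the same conventions: its $\Stab_{\Aut(G)}(H)$ is the setwise stabiliser, and $\MP_G(H)\subseteq\Aut(H)$. The degenerate cases $H=1$ and $H=F$ are covered by the same argument, taking $K=F$ or $K=1$ respectively.
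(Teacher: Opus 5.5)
Your proposal is correct and is essentially the paper's own proof: writing $F=H*K$, you show $\gamma*\mathrm{id}_K$ lies in $\Stab_{\Aut(F)}(H)$ and restricts to $\gamma$, so $\rho_H$ is surjective. Hence $\MP_F(H)=\operatorname{im}\rho_H$, and condition (ii) of Theorem~\ref{thm:setwise-marked} holds automatically.
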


\medskip\noindent\textbf{Reduction to tuples.}
Marked subgroup rigidity admits an equivalent formulation in terms of generating
tuples; this requires only residual finiteness.
\begin{theorem}\label{thm:tuples}
Let $\Gamma$ be a finitely generated residually finite group and let
$H\le\Gamma$ be finitely generated. Then $H$ is profinitely rigid in $\Gamma$
if and only if some (equivalently, every) ordered generating tuple of $H$ is
profinitely rigid as a tuple in $\Gamma$.
\end{theorem}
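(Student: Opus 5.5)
We take the natural tuple notion, presumably the one the paper intends. A $k$-tuple $\mathbf h=(h_1,\dots,h_k)\in\Gamma^k$ induces, for each finite $K$, the pushforward $\nu_{\mathbf h,K}$ of the uniform measure on $\Hom(\Gamma,K)$ under $f\mapsto(f(h_1),\dots,f(h_k))\in K^k$. The tuple $\mathbf h$ is \emph{profinitely rigid} if, whenever $\mathbf l\in\Gamma^k$ satisfies $\nu_{\mathbf l,K}=\nu_{\mathbf h,K}$ for all finite $K$, there is $\varphi\in\Aut(\Gamma)$ with $\varphi(h_i)=l_i$ for all $i$. The plan is to fix an arbitrary generating tuple $\mathbf h$ of $H$ and show that $H$ is profinitely rigid iff $\mathbf h$ is. This gives the ``every'' form, and hence the ``some'' form.

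\textbf{Step 1: tuple measures are subgroup measures.} Since $\mathbf h$ generates $H$, evaluation $e_{\mathbf h}\colon\Hom(H,K)\to K^k$ is injective. Moreover $\nu_{\mathbf h,K}$ is the pushforward of $\mu_{H,K}$ along $e_{\mathbf h}$, because $f\mapsto f|_H\mapsto e_{\mathbf h}(f|_H)$ is exactly tuple evaluation. Consequently, if $\alpha\colon H\to L$ is an isomorphism and $\mathbf l=\alpha(\mathbf h)$, then $e_{\mathbf l}(h)=e_{\mathbf h}(h\circ\alpha)$ for $h\in\Hom(L,K)$. Hence $\alpha$ is a measure equivalence iff $\nu_{\mathbf l,K}=\nu_{\mathbf h,K}$ for every finite $K$. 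This uses only injectivity of $e_{\mathbf h}$ and $e_{\mathbf l}$ and that $e_{\mathbf h}\circ(\,\cdot\circ\alpha)$ and $e_{\mathbf l}$ agree.

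\textbf{Step 2: equal tuple measures force an isomorphism.} This is the main step and the only place residual finiteness enters. Suppose $\nu_{\mathbf l,K}=\nu_{\mathbf h,K}$ for all finite $K$, and put $L=\langle\mathbf l\rangle$. We must show that $h_i\mapsto l_i$ extends to an isomorphism $\alpha\colon H\to L$, i.e.\ that $w(\mathbf h)=1\iff w(\mathbf l)=1$ for every word $w\in F_k$.

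Suppose instead that $w(\mathbf h)=1$ but $w(\mathbf l)\ne1$. By residual finiteness there is a finite quotient $f_0\colon\Gamma\to K$ with $f_0(w(\mathbf l))\ne1$. Then $\nu_{\mathbf l,K}$ gives positive mass to the set $\{\mathbf k\in K^k: w(\mathbf k)\ne1\}$. On the other hand, $\nu_{\mathbf h,K}$ is supported on tuples with $w(\mathbf k)=w(f(\mathbf h))=f(w(\mathbf h))=1$. This is a contradiction, and the reverse implication is symmetric. So $\alpha$ is a well-defined surjective homomorphism with a well-defined inverse, and by Step 1 it is a measure equivalence.

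\textbf{Step 3: conclusion.} Assume $H$ is profinitely rigid, and let $\mathbf l$ have the same tuple measures as $\mathbf h$. Step 2 gives a measure equivalence $\alpha\colon H\to L$ with $\alpha(\mathbf h)=\mathbf l$. Rigidity gives $\varphi\in\Aut(\Gamma)$ with $\varphi|_H=\alpha$, so $\varphi(h_i)=l_i$.

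Conversely, assume $\mathbf h$ is rigid, and let $\alpha\colon H\to L$ be a measure equivalence, where $L$ is finitely generated. Put $\mathbf l=\alpha(\mathbf h)$. By Step 1 its tuple measures agree with those of $\mathbf h$, so tuple rigidity gives $\varphi\in\Aut(\Gamma)$ with $\varphi(h_i)=l_i$. Since $\mathbf h$ generates $H$ and $\mathbf l$ generates $L$, we get $\varphi|_H=\alpha$ and $\varphi(H)=L$.

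Both directions hold for each fixed generating tuple, which gives ``some iff every''. The trivial subgroup and the empty tuple are handled by the same argument without change.
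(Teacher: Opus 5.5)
Your proof is correct. The implication ``tuple rigid $\Rightarrow$ $H$ rigid'' (your Step 1 and the second half of Step 3) is essentially the paper's argument, phrased through the injective evaluation maps $e_{\mathbf h}$ instead of the counts $N_T(G;g)$.

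The implication ``$H$ rigid $\Rightarrow$ tuple rigid'' uses a different key input. The paper first invokes its profinite characterisation of tuple measure-equivalence (Theorem~\ref{thm:tuple-char}, whose proof is only referred to \cite{HMP}). This produces $\Theta\in\Aut(\widehat\Gamma)$ with $\Theta(\iota(h_i))=\iota(l_i)$, and the paper then transfers relations using injectivity of $\iota$.

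Your Step 2 never passes through $\widehat\Gamma$. A single finite quotient $f_0$ detecting $w(\mathbf l)\neq1$ gives $\nu_{\mathbf l,K}$ mass at least $1/|\Hom(\Gamma,K)|$ on a tuple violating $w$. Meanwhile $\nu_{\mathbf h,K}$ is supported on tuples satisfying $w$.

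Your route buys two things. It is more elementary and self-contained, since residual finiteness enters only through one separating quotient. It also shows more than needed: in a residually finite group, equality of the supports of the tuple measures, not of the full distributions, already forces $h_i\mapsto l_i$ to extend to an isomorphism $H\to L$.

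The paper's route buys a tie-in to the profinite orbit picture used throughout the rest of the paper. Its cost is that it depends on the heavier characterisation theorem.
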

Theorem~\ref{thm:tuples} identifies marked subgroup rigidity with the
corresponding tuple problem and places word rigidity as its rank-one case.

\medskip\noindent\textbf{Universality and reduction.}
Both notions are stable under adjoining a free factor.
\begin{theorem}\label{thm:universal}
Let $F$ and $P$ be finitely generated free groups and let $K\le F$ be finitely
generated. Then
\begin{enumerate}
\item[(i)] $K$ is profinitely rigid in $F$ if and only if it is profinitely
  rigid in $F*P$;
\item[(ii)] $K$ is setwise profinitely rigid in $F$ if and only if it is setwise
  profinitely rigid in $F*P$.
\end{enumerate}
Consequently, marked and setwise profinite rigidity are stable under free
extension, and either property for $K\le F$ is equivalent to the corresponding
property in $\acl_F(K)$.
\end{theorem}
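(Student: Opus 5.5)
We will deduce Theorem~\ref{thm:universal} from the profinite orbit principle (Proposition~\ref{prop:lerf}), since free groups are LERF. Write $F'=F*P$. The first step is to compare the measures for $K$ inside $F$ with those inside $F'$. Every homomorphism $F'\to Q$, with $Q$ finite, is a pair $(f,g)$ with $f\in\Hom(F,Q)$ and $g\in\Hom(P,Q)$, and the restriction to $K$ depends only on $f$. Hence $\mu^{F'}_{K,Q}=\mu^{F}_{K,Q}$. So for finitely generated $K,L\le F$, an isomorphism $\alpha\colon K\to L$ is a measure equivalence in $F$ if and only if it is one in $F'$. This settles the directions in which one starts from a rigidity statement in $F'$ about subgroups of $F$, apart from realisation, which we treat below.

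Now suppose $K$ is (marked) rigid in $F$, and let $\alpha\colon K\to L$ be a measure equivalence in $F'$ with $L\le F'$ finitely generated. We must push $L$ into $F$ by an automorphism of $F'$. By Proposition~\ref{prop:lerf}, $\widehat\alpha$ extends to some $\Phi\in\Aut(\widehat{F'})$. Then $\overline L=\Phi(\overline K)$ lies in $\Phi(\widehat F)$, which is a free factor of $\widehat{F'}$ of rank $\rk F$. The key input is the known theorem (the one used in \cite{JSZ}, resting on \cite{GJ,Wilton}) that the profinite closure detects free factors containing a subgroup: $\acl$ in $F'$ is computed profinitely, so $L$ lies in a free factor $F_1\le F'$ of rank $\le\rk F$. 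Enlarging $F_1$, we may take it of rank exactly $\rk F$ (take a free factor of $F'$ containing $F_1$ of the right rank). Choose $\psi\in\Aut(F')$ with $\psi(F_1)=F$. Then $\psi\circ\alpha\colon K\to\psi(L)\le F$ is a measure equivalence in $F'$, hence in $F$ by the first step. Rigidity in $F$ gives $\varphi\in\Aut(F)$ extending it, and $\psi^{-1}\circ(\varphi*\mathrm{id}_P)$ extends $\alpha$. This proves that rigidity in $F$ implies rigidity in $F'$. The setwise version is identical, ignoring markings.

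For the converse, suppose $K$ is rigid in $F'$ and $\alpha\colon K\to L\le F$ is a measure equivalence in $F$. Then it is one in $F'$, so some $\varphi\in\Aut(F')$ extends $\alpha$. We need $\varphi'\in\Aut(F)$ doing the same. For this we pass to algebraic closures: $\acl_{F'}(K)=\acl_F(K)=:A$, a free factor of $F$ and hence of $F'$. Then $\varphi(A)=\acl(L)=:B\le F$. We have $F=A*C$ and $F=B*D$, with $\rk C=\rk D$ since $\rk A=\rk B$. Define $\varphi'=\varphi|_A*\theta$, where $\theta\colon C\to D$ is any isomorphism; this lies in $\Aut(F)$ and extends $\alpha$. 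The same argument handles the setwise case.

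The final clause follows by applying (i) and (ii) with $F$ replaced by $A=\acl_F(K)$ and $F=A*C$, comparing both $A$ and $F$ to $F$. The main obstacle is the step placing $L$ inside a free factor of $F'$ of rank $\rk F$. This requires that free-factor containment, equivalently algebraic closure, be determined by the closure in $\widehat{F'}$ up to $\Aut(\widehat{F'})$. I would try to invoke that result from the literature directly, or reprove it from the free-factor detection results cited above.
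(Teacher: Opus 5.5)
Your proof is correct and follows the paper's route: stability of the induced measures under adjoining $P$, the profinite orbit principle together with Lemma~\ref{lem:smallest-factor} to move $L$ into $F$ by an automorphism of $F*P$, and preservation of algebraic closures (with Lemma~\ref{lem:acl-trans}) for the converse. The differences are minor. The paper first reduces to $K$ algebraic in $F$, so that $\Phi(\overline F)=\overline{\acl_{F*P}(L)}$ gives equal ranks and $\varphi(F)=F$ directly. You skip that reduction and use only the containment $\overline{\acl_{F*P}(L)}\le\Phi(\overline F)$; for the converse you glue $\varphi|_A$ with an isomorphism between complements of $A$ and $\varphi(A)$ in $F$. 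The rank bound you assert does need one line of justification: the retraction of $\widehat{F*P}$ onto the free profinite factor $\overline{\acl_{F*P}(L)}$ restricts to a surjection from $\Phi(\overline F)$.
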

The same theorem also permits reduction to the algebraic closure
$\acl_F(K)$; we isolate this consequence as Proposition~\ref{prop:reduce}.

\medskip\noindent\textbf{Setwise rigid subgroups.}
For setwise rigidity we obtain a particularly strong source of examples. We call
$H\le F$ \emph{measure-detected} if every finitely generated subgroup measure
equivalent to $H$ is equal to $H$. Finite-index verbal subgroups have this
property (Theorem~\ref{thm:measure-detected}). More generally, for subgroups of
$F_n$ containing $[F_n,F_n]$, the measure class is exactly the
$\Aut(F_n)$-orbit and is classified by the invariant factors of the image in
$\mathbb Z^n$ (Theorem~\ref{thm:above-commutator}).

\medskip\noindent\textbf{Rigid subgroups.}
For marked rigidity, the principal structural result is a free-product theorem.
\begin{theorem}\label{thm:free-prod}
Let $F$ be a finitely generated free group, let $F=A_1*\dots*A_k$ be an internal
free product decomposition, let $H_i\le A_i$ be finitely generated subgroups, and
put
\[
  H=H_1*\dots*H_k .
\]
Then $H$ is profinitely rigid in $F$ \emph{if and only if} every $H_i$ is
profinitely rigid in $F$. In that case $H$ is profinitely rigid in every free
extension $F*P$ with $P$ finitely generated free.
\end{theorem}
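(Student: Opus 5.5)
We will work throughout with the profinite orbit principle (Proposition~\ref{prop:lerf}) and the tuple reformulation (Theorem~\ref{thm:tuples}), since free groups are LERF and residually finite. Concretely, a measure equivalence $\alpha\colon H\to L$ is the same thing as an automorphism $\Phi\in\Aut(\Fhat)$ with $\Phi|_{\overline H}=\widehat\alpha$. Marked rigidity of $H$ says that every such restriction is already the restriction of some discrete $\varphi\in\Aut(F)$. The final sentence of the theorem will follow at once from the equivalence together with Theorem~\ref{thm:universal}(i).

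\emph{Direction ($\Rightarrow$).} Suppose $H$ is rigid, and let $\beta\colon H_i\to L_i$ be a measure equivalence, realised by $\Phi_i\in\Aut(\Fhat)$. We will build a measure equivalence of all of $H$. Put $\Psi=\Phi_i$ and $L=\Phi_i(\overline H)\cap F$. The difficulty is that $\Phi_i(\overline{H_j})$ for $j\ne i$ need not be closures of discrete subgroups.

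To avoid this, we instead use the decomposition and the free-factor case. Since $\overline{A_i}$ is a free factor of $\Fhat$, we first try to replace $\Phi_i$ by an automorphism which is the identity on the other closures $\overline{A_j}$, $j\neq i$. The plan here is to use that $\overline{L_i}=\Phi_i(\overline{H_i})$ is conjugate into a profinite free factor. For this we invoke the algebraic-closure reduction (Proposition~\ref{prop:reduce}) and Corollary~\ref{cor:ff-setwise-marked}, which let us reduce to $\beta$ with $L_i\le A_i$ after composing with a discrete automorphism. Once that is done, the map $\alpha=\beta*\mathrm{id}$ on $H=H_i*\ast_{j\ne i}H_j$ has profinite completion extending to $\Phi_i|_{\overline{A_i}}*\mathrm{id}$. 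So $\alpha$ is a measure equivalence, and rigidity of $H$ gives a discrete $\varphi$ with $\varphi|_{H_i}=\beta$. Hence $H_i$ is rigid.

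\emph{Direction ($\Leftarrow$).} Let $\alpha\colon H\to L$ be a measure equivalence, realised by $\Phi\in\Aut(\Fhat)$. Each restriction $\alpha|_{H_i}$ is then a measure equivalence $H_i\to\alpha(H_i)$, since $\Phi$ extends its completion. Rigidity of $H_i$ gives $\varphi_i\in\Aut(F)$ with $\varphi_i|_{H_i}=\alpha|_{H_i}$. The problem is to glue the $\varphi_i$ into a single automorphism. We have $L=\alpha(H_1)*\dots*\alpha(H_k)$, and $\alpha(H_i)=\varphi_i(H_i)\le\varphi_i(A_i)$.

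We will show that the $\alpha(H_i)$ can be placed simultaneously in the factors of one free decomposition of $F$ compatible with the $\varphi_i(A_i)$. The first step is to replace $H_i$ by $\acl_{A_i}(H_i)$, using Theorem~\ref{thm:universal}, so that $\varphi_i(A_i)$ may be taken as $\acl_F(\alpha(H_i))$ up to free factors. The second step uses $\Phi$ to see that these algebraic closures form a free-product decomposition of their join profinitely; a free-factor detection result (as in \cite{PP,Wilton,GJ}) then upgrades this to a discrete decomposition $F=B_1*\dots*B_k$ with $\alpha(H_i)\le B_i$ and $\rk B_i=\rk A_i$. We then pick isomorphisms $A_i\to B_i$ extending $\alpha|_{H_i}$, using rigidity of $H_i$ inside the free factor and Corollary~\ref{cor:ff-setwise-marked}, and take their free product.

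\emph{Main obstacle.} The hardest step is this gluing: producing the discrete decomposition $\{B_i\}$ from the profinite data. We expect to handle it by induction on $k$, peeling off one factor at a time with the two-factor case $A_1*A_2$. In that case the key input is that a subgroup whose closure is a profinite free factor is a discrete free factor.
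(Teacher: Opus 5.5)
\textbf{Gap in the direction ``every $H_i$ rigid $\Rightarrow$ $H$ rigid''.} The gap sits exactly at the step you flag as the main obstacle, so this direction is not yet proved.

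Your setup is right. After discarding trivial $H_i$ and normalising $A_i=\acl_F(H_i)$, Lemma~\ref{lem:smallest-factor} gives $\Phi(\overline{A_i})=\overline{B_i}$ with $B_i=\acl_F(\alpha(H_i))=\varphi_i(A_i)$. Hence $\Fhat=\overline{B_1}\amalg\dots\amalg\overline{B_k}$.

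The proposed induction, however, does not address the real issue. Its key input is free-factor detection for a single subgroup, but each $B_i$ is already a discrete free factor. What is missing is information about the \emph{joint} position of the $B_i$. Complements of free factors are not unique, so ``peeling off'' $B_1$ does not work as stated.

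The missing idea is that you do not need a decomposition $F=B_1*\dots*B_k$ in advance; generation suffices. The subgroup $M=\langle B_1,\dots,B_k\rangle$ is finitely generated, hence closed in the profinite topology by M.~Hall's theorem. It is also dense, since its closure contains every $\overline{B_i}$. Therefore $M=F$. The endomorphism of $F$ that equals $\varphi_i$ on each $A_i$ then has image $M=F$, and Hopficity makes it an automorphism restricting to $\alpha$ on $H$ (Lemma~\ref{lem:gluing}). The internal decomposition $F=B_1*\dots*B_k$ then follows automatically.

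\textbf{The direction ``$H$ rigid $\Rightarrow$ $H_i$ rigid''.} This is essentially sound and matches the paper's idea of perturbing one factor while fixing the others. Working directly in $F$ via Proposition~\ref{prop:reduce}(ii) also spares you the paper's descent to $A_j$.

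One step needs repair. For an arbitrary $\Phi_i\in\Aut(\Fhat)$ extending $\widehat\beta$, nothing forces $\Phi_i(\overline{A_i})=\overline{A_i}$, so $\Phi_i|_{\overline{A_i}}\amalg\mathrm{id}$ need not be defined. You can fix this in either of two ways:
\begin{itemize}
\item normalise $A_i=\acl_F(H_i)$ and combine $\acl_F(L_i)=A_i$ (from Proposition~\ref{prop:reduce}(ii)) with Lemma~\ref{lem:smallest-factor}; or
\item apply Proposition~\ref{prop:lerf} inside $A_i$ (legitimate by Lemma~\ref{lem:measure-stable}) to get $\Psi\in\Aut(\widehat{A_i})$, then use Lemma~\ref{lem:free-prof-prod}.
\end{itemize}
The paper avoids profinite automorphisms here altogether by using the elementary product formula, Lemma~\ref{lem:product-measure}. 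Corollary~\ref{cor:ff-setwise-marked} plays no role in this direction.
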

The corresponding forward implication holds for setwise rigidity
(Proposition~\ref{prop:free-prod-setwise}); we do not assert a converse in the
setwise setting.  A useful consequence is the assembly principle
(Corollary~\ref{cor:assembly}): if profinitely rigid words are chosen in distinct
free factors, then the subgroup freely generated by those words is profinitely
rigid.  Thus every new family of rigid words automatically produces families of
rigid subgroups of arbitrary rank.
\begin{corollary}\label{cor:powers}
Let $F_n=\langle x_1,\dots,x_n\rangle$. For $r\le n$ and nonzero integers
$m_1,\dots,m_r$, the subgroup
$\langle x_1^{m_1},x_2^{m_2},\dots,x_r^{m_r}\rangle$ is profinitely rigid in
$F_n$.
\end{corollary}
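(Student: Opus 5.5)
We deduce Corollary~\ref{cor:powers} from Theorem~\ref{thm:free-prod} applied to the free-product decomposition
\[
  F_n=\langle x_1\rangle*\langle x_2\rangle*\dots*\langle x_r\rangle*\langle x_{r+1},\dots,x_n\rangle,
\]
taking $H_i=\langle x_i^{m_i}\rangle\le A_i=\langle x_i\rangle$ for $i\le r$ and the trivial subgroup in the last factor (omitted if $r=n$). Then $H=H_1*\dots*H_r$ is exactly $\langle x_1^{m_1},\dots,x_r^{m_r}\rangle$, and the theorem reduces the claim to showing that each cyclic subgroup $\langle x_i^{m_i}\rangle$ is profinitely rigid in $F_n$. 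The trivial factor is harmless: $\{1\}$ is profinitely rigid, since any measure equivalence out of it has trivial target and is extended by the identity. Alternatively, one may simply note that the decomposition need not include a factor carrying the trivial subgroup, since $\{1\}*H=H$.

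By Corollary~\ref{cor:word-cyclic}, rigidity of $\langle x_i^{m_i}\rangle$ is equivalent to profinite rigidity of the word $x_i^{m_i}$ in $F_n$. Since $x_i$ is primitive, an automorphism carries this word to $x_1^{m}$ with $m=m_i$, so it suffices to treat the word $x_1^{m}$. Profinite rigidity of powers of primitive elements is the main input. I would cite it from the literature on rigid words: for primitives \cite{PP,Wilton}, and for proper powers of primitives \cite{HMP}, or via the profinite orbit principle.

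If a self-contained argument is wanted, I would use Theorem~\ref{thm:universal} to reduce to the ambient group $\acl(\langle x_1^m\rangle)=\langle x_1\rangle\cong\mathbb Z$. There a word $u=x_1^k$ with $\mu_u=\mu_{x_1^m}$ on every cyclic group $\mathbb Z/N$ must satisfy $k=\pm m$. Indeed, the image of $x\mapsto kx$ on $\mathbb Z/N$ has size $N/\gcd(k,N)$, and equality of supports for all $N$ forces $\gcd(k,N)=\gcd(m,N)$ for every $N$, hence $|k|=|m|$. Then $x_1\mapsto x_1^{\pm1}$ realises the equivalence. This is where I expect care to be needed. Theorem~\ref{thm:universal} only transfers rigidity between $F$ and $F*P$ for a fixed $K\le F$, so a competitor $L$ in $F_n$ not lying in $\langle x_1\rangle$ is controlled only through that theorem's proof. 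I would therefore rely on the theorem's stated consequence that rigidity in $F$ is equivalent to rigidity in $\acl_F(K)$.

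Finally, the last sentence of Theorem~\ref{thm:free-prod} is not needed here, though it gives the bonus that the subgroup is rigid in every free extension of $F_n$.
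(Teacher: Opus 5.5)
Your proof is correct and is essentially the paper's argument: you use Theorem~\ref{thm:free-prod} with rank-one factors, take rigidity of $x^{m}$ from \cite{HMP} as the input, and use Theorem~\ref{thm:universal} to move between free factors. The only difference is that the paper applies the theorem in $F_r=\langle x_1,\dots,x_r\rangle$ (via Corollaries~\ref{cor:examples}(b) and~\ref{cor:powers-basis}) and then lifts to $F_n$, whereas you absorb $x_{r+1},\dots,x_n$ into an extra factor carrying the trivial subgroup, exactly as the paper does in Corollary~\ref{cor:assembly}.

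Three small remarks. Your ``alternatively, omit the trivial factor'' only works by passing to $F_r$ and invoking Theorem~\ref{thm:universal}, because Theorem~\ref{thm:free-prod} needs a decomposition of all of $F$. Your worry about competitors $L$ outside $\langle x_1\rangle$ is unnecessary, since Theorem~\ref{thm:universal}(i) as stated already quantifies over all competitors in $F*P$. Your gcd argument correctly settles the rank-one case in $\langle x_1\rangle\cong\mathbb Z$.
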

Combining Theorem~\ref{thm:free-prod} with known rigid words produces rigid
subgroups well beyond the free-factor case. In particular, for every $n\ge2$
and $1\le r\le n$, there is a profinitely rigid subgroup $H\le F_n$ of rank
$r$ and infinite index with $\acl_{F_n}(H)=F_n$
(Corollary~\ref{cor:ranks}).

We also consider whether subgroup rigidity can be used in the opposite
direction to produce new rigid words. This leads to a transitivity question
for chains $K\le H\le F$ (Question~\ref{q:transitivity}). We prove the desired
transitivity whenever $\acl_F(K)\le H$ (Proposition~\ref{prop:acl-intermediate})
and record the new word families that would follow from a general positive
answer (Corollary~\ref{cor:conditional-surface}).

Our arguments build on the measure-preserving and separability results for
primitive and surface words in \cite{PP,Wilton,Wilton2,AF,HMP}, together with
the profinite criterion of Garrido and Jaikin-Zapirain \cite{GJ}. The subgroup
formalism developed here provides a common setting in which these inputs can be
combined.

\subsection*{Organisation of the paper}

Section~\ref{sec:intro} sets up the two notions of subgroup profinite rigidity
and states the main results; Section~\ref{sec:prelim} collects the tools---
LERF, verbal subgroups, algebraic closure, and free profinite products---used
throughout. Section~\ref{sec:setwise-vs-marked} compares the two notions: we
introduce $\MP_G(H)$ and $\rho_H$ and prove Theorem~\ref{thm:setwise-marked}
(\S\ref{sec:mp}), isolate the invariants visible in the induced measures
(\S\ref{sec:invariants}), and prove Theorem~\ref{thm:universal} together with
the reduction principle (\S\ref{sec:universal}). Section~\ref{sec:setwise-rigid}
is devoted to setwise profinitely rigid subgroups: finite-index verbal
subgroups (\S\ref{sec:verbal}) and subgroups above the commutator subgroup
(\S\ref{sec:commutator}). Section~\ref{sec:freeproducts} treats profinitely
rigid subgroups and proves Theorem~\ref{thm:free-prod}, from which
Corollary~\ref{cor:powers} and the new families of
Corollaries~\ref{cor:examples} and \ref{cor:ranks} follow;
\S\ref{sec:back-to-words} records the transitivity question governing the
reverse passage from rigid subgroups to rigid words and proves the algebraic-closure case. Finally, Section~\ref{sec:tuples} develops profinite rigidity for
tuples and proves Theorem~\ref{thm:tuples}.

\section{Preliminaries}\label{sec:prelim}

We collect the profinite and free-group tools used throughout the paper. The
first subsection recalls profinite completions and subgroup separability; the
second records the facts about verbal subgroups, algebraic closure, and free
profinite products needed later.

\subsection{Profinite completions and LERF}\label{sec:lerf}

The profinite completion packages all finite quotients of a group into a single
topological object and will mediate between equality of finite-group measures
and automorphisms of the completion.

Let $P$ be a group. The profinite topology on $P$ has as a basis the left
cosets of finite-index subgroups of $P$. The profinite completion $\widehat P$
of $P$ is the inverse limit of the finite quotients of $P$: writing
$N\fii P$ to mean that $N$ is a normal subgroup of finite index,
\[
  \widehat P=\varprojlim_{N\fii P} P/N .
\]
Equipped with the inverse limit topology, in which each $P/N$ carries the
discrete topology, $\widehat P$ is a compact, Hausdorff, totally disconnected
topological group. There is a natural homomorphism
\[
  \iota\colon P\longrightarrow\widehat P,\qquad \iota(\gamma)=(\gamma N)_{N\fii P}.
\]
A group $G$ is \emph{LERF} (locally extended residually finite, or subgroup
separable) if any finitely generated subgroup $H\le G$ and any
$g\in G\setminus H$ can be separated in a finite quotient, i.e.\ there is a
finite-index normal subgroup $N\fii G$ with $gN\notin HN/N$. Finitely generated
free groups are LERF by M.~Hall's theorem \cite{Hall}.

We shall use repeatedly the following criterion of Garrido and
Jaikin-Zapirain.
\begin{lemma}[{\cite[Proposition 3.2]{GJ}}]\label{lem:gj}
Let $\mathcal G$ be a finitely generated profinite group and let
$\alpha\colon \mathcal H_1\to\mathcal H_2$ be an isomorphism between two closed
subgroups of $\mathcal G$. The following are equivalent.
\begin{enumerate}
\item There exists $\widetilde\alpha\in\Aut(\mathcal G)$ with
      $\widetilde\alpha|_{\mathcal H_1}=\alpha$.
\item For every finite group $K$ and every $\beta\in\Hom(\mathcal H_2,K)$,
      $\Pr_{\mathcal H_1,K}(\beta\circ\alpha)=\Pr_{\mathcal H_2,K}(\beta)$.
\end{enumerate}
\end{lemma}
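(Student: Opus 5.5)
The plan is to prove the two implications separately. Throughout, $\Hom$ denotes continuous homomorphisms into the finite (discrete) group $K$, and probabilities are taken with respect to the uniform measure on $\Hom(\mathcal G,K)$. This set is finite because $\mathcal G$ is topologically finitely generated.

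For (1)$\Rightarrow$(2), precomposition with $\widetilde\alpha$ gives a bijection $f\mapsto f\circ\widetilde\alpha$ of $\Hom(\mathcal G,K)$. It carries the fibre of restriction over $\beta\in\Hom(\mathcal H_2,K)$ onto the fibre over $\beta\circ\alpha\in\Hom(\mathcal H_1,K)$. Counting fibres then gives (2).

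For (2)$\Rightarrow$(1), the first step upgrades the hypothesis from homomorphisms to epimorphisms. Since $|\Hom(\mathcal G,K)|$ does not depend on $i$, (2) says that
\[
 N_K(\beta):=|\{f\in\Hom(\mathcal G,K): f|_{\mathcal H_2}=\beta\}|
\]
equals the analogous count $N_K(\beta\circ\alpha)$ for extensions of $\beta\circ\alpha$. Let $E_K$ denote the number of extensions that are moreover surjective. Grouping extensions by their image gives
\[
 N_K(\beta)=\sum_{\beta(\mathcal H_2)\le L\le K}E_L(\beta),
\]
where $\beta$ is regarded as a map into $L$. The same identity holds for $\beta\circ\alpha$, with the same index set, because $\alpha$ is an isomorphism and so $\beta\circ\alpha$ has the same image. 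Applying the hypothesis to every subgroup $L$ of $K$ and using Möbius inversion on the subgroup lattice of $K$ (or induction on $|K|$), we get $E_K(\beta)=E_K(\beta\circ\alpha)$ for all $K$ and $\beta$.

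Next, we realise $\alpha$ on characteristic finite quotients. For $n\ge1$, let $U_n$ be the intersection of all open normal subgroups of index at most $n$. There are finitely many such subgroups since $\mathcal G$ is finitely generated, so $U_n$ is open, characteristic, and decreasing, with $\bigcap_n U_n=1$. Let $\pi_n\colon\mathcal G\to Q_n:=\mathcal G/U_n$ and take $\beta=\pi_n|_{\mathcal H_2}$. Then $E_{Q_n}(\beta)\ge1$ because $\pi_n$ itself is such an extension. Hence there is an epimorphism $f\colon\mathcal G\to Q_n$ with $f|_{\mathcal H_1}=\pi_n\circ\alpha$.

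We then show that $\ker f=U_n$. In $Q_n$ the normal subgroups of index $\le n$ intersect trivially, so transporting along $\mathcal G/\ker f\cong Q_n$ shows that $\ker f$ is an intersection of normal subgroups of index $\le n$. Therefore $\ker f\supseteq U_n$, and equality holds since both have index $|Q_n|$. Thus $f$ induces $\varphi\in\Aut(Q_n)$ with $\varphi\circ\pi_n|_{\mathcal H_1}=\pi_n\circ\alpha$. Let $A_n\subseteq\Aut(Q_n)$ be the nonempty finite set of such $\varphi$. Since $U_n$ is characteristic, automorphisms of $Q_{n+1}$ descend to $Q_n$, and this descent maps $A_{n+1}$ into $A_n$. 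An inverse limit of nonempty finite sets is nonempty, so we obtain a compatible family $(\varphi_n)$. This family defines a continuous automorphism $\widetilde\alpha=\varprojlim\varphi_n$ of $\mathcal G$, and $\widetilde\alpha$ agrees with $\alpha$ on $\mathcal H_1$ modulo every $U_n$, hence on $\mathcal H_1$.

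I expect the main obstacle to be the passage from equality of restriction measures to equality of epimorphism counts. The argument requires that the hypothesis be applied to all subgroups $L\le K$ at once, and that the image of $\beta$ is unchanged when $\beta$ is replaced by $\beta\circ\alpha$. The kernel identification $\ker f=U_n$ is the other point that needs care, since it relies on $U_n$ being defined intrinsically by index.
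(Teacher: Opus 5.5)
Your proof is correct. The paper gives no proof of this lemma; it imports it from \cite[Proposition 3.2]{GJ}. Your argument is the standard one for this result: M\"obius inversion to pass from restriction counts to epimorphism counts, realisation of $\alpha$ on the quotients $\mathcal G/U_n$, and a compactness argument. This is also the scheme the paper borrows from \cite{HMP} for Theorem~\ref{thm:tuple-char}.

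One justification needs tightening, namely the descent step. Automorphisms of $Q_{n+1}$ need not lift to $\mathcal G$, so characteristicity of $U_n$ in $\mathcal G$ is not the reason they descend to $Q_n$. The correct reason is that $U_n/U_{n+1}$ is the intersection of all normal subgroups of $Q_{n+1}$ of index at most $n$, and is therefore characteristic in $Q_{n+1}$. This is the same intrinsic description you already use to prove $\ker f=U_n$.
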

For finitely generated subgroups of a LERF group, Lemma~\ref{lem:gj} translates
the equality of the discrete induced measures into an automorphism of the
ambient profinite completion. The point is that LERF identifies
$\widehat{H_i}$ with the closure $\overline{H_i}\le\widehat G$.
\begin{proposition}[Profinite orbit principle]\label{prop:lerf}
Let $G$ be a finitely generated LERF group, let $H_1,H_2\le G$ be finitely
generated subgroups, and let $\alpha\colon H_1\to H_2$ be an isomorphism.  The
following are equivalent.
\begin{enumerate}
\item[(i)] $\alpha$ is a measure equivalence relative to $G$;
\item[(ii)] the induced continuous isomorphism
\[
  \widehat\alpha\colon\overline{H_1}\longrightarrow\overline{H_2}
\]
extends to an automorphism of $\widehat G$.
\end{enumerate}
\end{proposition}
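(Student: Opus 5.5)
The plan is to reduce to Lemma~\ref{lem:gj} applied in $\mathcal G=\widehat G$ with $\mathcal H_i=\overline{H_i}$. That reduction needs two facts: the discrete measures $\Pr^G_{H_i,K}$ coincide with the profinite measures $\Pr^{\widehat G}_{\overline{H_i},K}$, and $\alpha$ induces a continuous isomorphism $\widehat\alpha\colon\overline{H_1}\to\overline{H_2}$ whose pullback on continuous homomorphisms matches precomposition with $\alpha$.

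\textbf{Step 1: $\overline{H_i}\cong\widehat{H_i}$.} Since $G$ is LERF and $H_i$ is finitely generated, every finite-index subgroup $U\le H_i$ is itself finitely generated and hence separable in $G$. Standard arguments then give a finite-index $V\le G$ with $V\cap H_i\le U$. Thus the profinite topology of $G$ induces the full profinite topology on $H_i$, and the natural map $\widehat{H_i}\to\overline{H_i}\le\widehat G$ is an isomorphism. Consequently $\alpha$ extends uniquely to a continuous isomorphism $\widehat\alpha=\widehat{\alpha}\colon\widehat{H_1}\to\widehat{H_2}$, viewed as a map $\overline{H_1}\to\overline{H_2}$. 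This is the step carrying the real content, and the LERF hypothesis is used exactly here. Without it, $\overline{H_i}$ could be a proper quotient of $\widehat{H_i}$, and homomorphisms $H_i\to K$ would not all extend.

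\textbf{Step 2: identify the measures.} For finite $K$, restriction along $G\to\widehat G$ gives a bijection $\Hom(G,K)\cong\Hom_{\mathrm{cts}}(\widehat G,K)$. By Step 1 it likewise gives a bijection $\Hom(H_i,K)\cong\Hom_{\mathrm{cts}}(\overline{H_i},K)$. These bijections commute with the two restriction maps, since restricting a continuous $\widehat G\to K$ to $\overline{H_i}$ and then to $H_i$ is the same as first restricting to $G$ and then to $H_i$. Here I take the measure in Lemma~\ref{lem:gj} to be the pushforward of the uniform measure on $\Hom_{\mathrm{cts}}(\widehat G,K)$, which is finite because $\widehat G$ is finitely generated. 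With that convention, $\Pr^{G}_{H_i,K}(h)=\Pr^{\widehat G}_{\overline{H_i},K}(\widehat h)$, where $\widehat h$ is the continuous extension of $h$. Moreover, $\widehat{h\circ\alpha}=\widehat h\circ\widehat\alpha$ by density of $H_1$ in $\overline{H_1}$.

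\textbf{Step 3: conclude.} Every continuous $\beta\colon\overline{H_2}\to K$ equals $\widehat h$ for $h=\beta|_{H_2}$. Using Step 2, condition (2) of Lemma~\ref{lem:gj} for $\widehat\alpha$ is therefore literally equation~\eqref{eq:meq} for $\alpha$ over all $h$ and $K$. The lemma then gives (i)$\iff$(ii). The only care needed is in Step 1, namely the standard fact that in a LERF group the closure of a finitely generated subgroup is its profinite completion; I would cite or briefly prove it via separability of finite-index subgroups of $H_i$.
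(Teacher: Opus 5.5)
Your proposal is correct and follows the paper's proof essentially step for step. Both arguments identify $\overline{H_i}$ with $\widehat{H_i}$ using LERF, use the restriction bijections $\Hom(\widehat G,K)\cong\Hom(G,K)$ and $\Hom(\overline{H_i},K)\cong\Hom(H_i,K)$ to match the profinite and discrete measures, and then apply Lemma~\ref{lem:gj} to $\widehat\alpha$. The only difference is that you spell out the standard separability argument giving $\overline{H_i}\cong\widehat{H_i}$, which the paper simply asserts.
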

\begin{proof}
Since $G$ is LERF and the $H_i$ are finitely generated, the embeddings
$H_i\hookrightarrow G$ induce canonical isomorphisms
$\widehat{H_i}\cong\overline{H_i}$.  For every finite group $K$, restriction
along the dense embeddings gives natural bijections
\[
  \Hom(\widehat G,K)\cong\Hom(G,K),\qquad
  \Hom(\overline{H_i},K)\cong\Hom(H_i,K).
\]
These bijections identify the uniform measure on $\Hom(\widehat G,K)$ with the
uniform measure on $\Hom(G,K)$ and therefore identify the induced profinite
measures on $\Hom(\overline{H_i},K)$ with the discrete measures
$\mu^G_{H_i,K}$.  Under these identifications, condition~(i) is exactly
condition~(2) of Lemma~\ref{lem:gj} for the isomorphism $\widehat\alpha$.
Lemma~\ref{lem:gj} therefore gives the equivalence with~(ii).
\end{proof}

The proposition turns measure equivalence into an orbit problem in the ambient
profinite completion.  The following formulation is particularly useful.
\begin{corollary}\label{cor:orbit-intersection}
Let $G$ be a finitely generated LERF group and let $H\le G$ be finitely
generated.
\begin{enumerate}
\item[(i)] $H$ is profinitely rigid in $G$ if and only if, for every
$\Theta\in\Aut(\widehat G)$ satisfying $\Theta(H)\le G$, there exists
$\varphi\in\Aut(G)$ such that
\[
  \varphi|_H=\Theta|_H.
\]
\item[(ii)] $H$ is setwise profinitely rigid in $G$ if and only if, for every
$\Theta\in\Aut(\widehat G)$ satisfying $\Theta(H)\le G$, one has
\[
  \Theta(H)\in\Aut(G)\cdot H.
\]
\end{enumerate}
\end{corollary}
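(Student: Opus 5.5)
The plan is to unwind the definitions and apply Proposition~\ref{prop:lerf}, together with the identification $\overline{H}\cong\widehat H$ coming from LERF, in both directions. The basic dictionary is as follows. Pairs $(L,\alpha)$, with $L\le G$ finitely generated and $\alpha\colon H\to L$ a measure equivalence, should correspond to automorphisms $\Theta\in\Aut(\widehat G)$ with $\Theta(H)\le G$.

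Given such a $\Theta$, I set $L:=\Theta(H)$. Since $\Theta$ restricted to $H$ is injective with image in $G$, $L$ is a finitely generated subgroup of $G$, and $\alpha:=\Theta|_H\colon H\to L$ is an isomorphism of discrete groups. Its profinite completion $\widehat\alpha\colon\overline H\to\overline L$ agrees with $\Theta|_{\overline H}$, because both are continuous and agree on the dense subgroup $H$. Hence $\widehat\alpha$ extends to $\Theta$, and Proposition~\ref{prop:lerf}, (ii)$\Rightarrow$(i), shows that $\alpha$ is a measure equivalence.

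Conversely, given a measure equivalence $\alpha\colon H\to L$, Proposition~\ref{prop:lerf}, (i)$\Rightarrow$(ii), produces $\Theta\in\Aut(\widehat G)$ extending $\widehat\alpha$. In particular $\Theta|_H=\alpha$, and so $\Theta(H)=L\le G$.

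For part (i), assume first that $H$ is profinitely rigid. Given $\Theta$ with $\Theta(H)\le G$, the first construction yields a measure equivalence $\alpha=\Theta|_H$. Rigidity then supplies $\varphi\in\Aut(G)$ with $\varphi|_H=\alpha=\Theta|_H$. For the converse, take a measure equivalence $\alpha\colon H\to L$ and let $\Theta$ be the automorphism from the second construction. The hypothesis gives $\varphi\in\Aut(G)$ with $\varphi|_H=\Theta|_H=\alpha$, and hence $\varphi(H)=L$.

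Part (ii) is identical, except that we track only images. If $H$ is setwise rigid, then $\Theta(H)=L$ is measure equivalent to $H$ via $\Theta|_H$, so $L\in\Aut(G)\cdot H$. Conversely, if $L$ is measure equivalent to $H$ via some $\alpha$, choose $\Theta$ extending $\widehat\alpha$; then $L=\Theta(H)\in\Aut(G)\cdot H$.

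The only point needing care is the compatibility claim $\widehat{\Theta|_H}=\Theta|_{\overline H}$. I would handle it by noting that LERF identifies $\widehat H$ and $\widehat L$ with the closures $\overline H$ and $\overline L$ in $\widehat G$. Continuous maps agreeing on a dense subset coincide, and $\Theta(\overline H)=\overline{\Theta(H)}=\overline L$ because $\Theta$ is a homeomorphism. Everything else is bookkeeping.
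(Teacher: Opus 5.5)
Your proposal is correct and matches the paper's proof essentially step for step. In one direction you note that $\Theta|_H$ is a measure equivalence by Proposition~\ref{prop:lerf}, using density to identify its completion with $\Theta|_{\overline H}$. In the other you extend $\widehat\alpha$ to some $\Theta$, so that $\Theta(H)=L\le G$. Then you read off the marked and setwise conclusions directly from the definitions.
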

\begin{proof}
Suppose first that $\Theta\in\Aut(\widehat G)$ satisfies $L:=\Theta(H)\le G$.
Then $\alpha:=\Theta|_H\colon H\to L$ is an isomorphism, and continuity shows
that its profinite completion is $\Theta|_{\overline H}$.  By
Proposition~\ref{prop:lerf}, $\alpha$ is a measure equivalence.  Marked,
respectively setwise, rigidity of $H$ gives the conclusions in (i),
respectively (ii).

Conversely, let $\alpha\colon H\to L$ be a measure equivalence with $L\le G$
finitely generated.  Proposition~\ref{prop:lerf} provides
$\Theta\in\Aut(\widehat G)$ extending $\widehat\alpha$.  In particular
$\Theta(H)=\alpha(H)=L\le G$.  The stated orbit-intersection property then
gives the required ambient automorphism in the marked case, or the required
$\Aut(G)$-orbit equality in the setwise case.
\end{proof}

Equivalently, the setwise measure class of $H$ is exactly the set of discrete
returns of its profinite automorphism orbit:
\begin{equation}\label{eq:measure-class-return}
  \{L\le G:L\text{ is measure equivalent to }H\}
  =\{\Theta(H):\Theta\in\Aut(\widehat G),\ \Theta(H)\le G\}.
\end{equation}
Thus setwise rigidity asks whether all such returns lie in one
$\Aut(G)$-orbit, while marked rigidity asks in addition that the returning
profinite automorphism be realizable on $H$ by a discrete ambient
automorphism.

Taking $H_1=H_2=H$ in Proposition~\ref{prop:lerf} also gives the useful
identity
\begin{equation}\label{eq:MP-profinite}
  \MP_G(H)=\{\gamma\in\Aut(H):\widehat\gamma\text{ extends to an element of }
  \Aut(\widehat G)\}
\end{equation}
whenever $G$ is finitely generated and LERF.

\begin{remark}\label{rmk:winverse}
A word $w$ and its inverse $w^{-1}$ can induce different measures on finite
groups even though $\langle w\rangle=\langle w^{-1}\rangle$. In that situation
Proposition~\ref{prop:lerf} only fixes $\overline{\langle w\rangle}$ up to
$\Aut(\widehat{F_n})$; it does not produce an automorphism of $F_n$ taking $w$
to $w^{-1}$. 
\end{remark}
We also use the characterisation of free factors by uniform induced measures.
\begin{proposition}\cite[Parzanchevski--Puder, restated]{PP}\label{prop:uniform}
Let $H\le F_n$ be finitely generated. Then $\mu_{H,G}$ is the uniform measure on
$\Hom(H,G)$ for every finite group $G$ if and only if $H$ is a free factor of
$F_n$.
\end{proposition}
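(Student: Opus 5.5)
The forward direction is a direct computation. Suppose $H$ is a free factor, so $F_n=H*C$ with $C$ free. Then for every finite group $G$ we have a bijection
\[
  \Hom(F_n,G)\cong\Hom(H,G)\times\Hom(C,G),
\]
under which the restriction map $r$ is the projection to the first factor. The pushforward of the uniform measure under a projection of a product is uniform, so $\mu_{H,G}$ is uniform for every $G$.

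For the converse I will reduce to a profinite statement via Proposition~\ref{prop:lerf}. Assume $\mu_{H,G}$ is uniform for all finite $G$. Since $H\le F_n$ is finitely generated, it is free of some rank $r$ (Nielsen--Schreier), so $\Hom(H,G)\cong G^r$. The support of $\mu_{H,G}$ is the image of restriction, which has at most $|G|^n$ elements. Uniformity forces this image to be all of $G^r$, so taking any nontrivial $G$ gives $r\le n$.

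Now let $A=\langle x_1,\dots,x_r\rangle$, which is a free factor of $F_n$. Fix a basis $h_1,\dots,h_r$ of $H$ and let $\alpha\colon H\to A$ be the isomorphism with $h_i\mapsto x_i$. Both $\mu_{H,G}$ and $\mu_{A,G}$ are uniform: the first by hypothesis, the second by the forward direction. Hence $\alpha$ is a measure equivalence in the sense of~\eqref{eq:meq}. Since $F_n$ is LERF, Proposition~\ref{prop:lerf} (equivalently Lemma~\ref{lem:gj}) shows that $\widehat\alpha$ extends to some $\Theta\in\Aut(\widehat{F_n})$ with $\Theta(\overline H)=\overline A$. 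It follows that $\overline H$ is a free factor of $\widehat{F_n}$: the image $\Theta^{-1}$ of the decomposition $\widehat{F_n}=\overline A\amalg\overline{\langle x_{r+1},\dots,x_n\rangle}$ is a free profinite product decomposition with $\overline H$ as a factor.

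The remaining step is to descend from the profinite statement to the discrete one: a finitely generated $H\le F_n$ whose closure is a free factor of $\widehat{F_n}$ is a free factor of $F_n$. This is the main obstacle, and it is genuinely nontrivial. Here I would invoke the Garrido--Jaikin-Zapirain theorem \cite{GJ}. Alternatively, one can follow the original route of \cite{PP}, which avoids the profinite language. There one writes $\mathbb E_{S_N}$ of fixed-point counts of $\mu_{H,S_N}$ as a sum over the core-graph quotients $H\to J$ (algebraic extensions). One then shows that if $H$ is not a free factor, the term coming from $\acl_{F_n}(H)$ with $H$ a proper non-free-factor in it contributes a deviation of order $N^{1-\pi}$, where $\pi$ is the primitivity rank. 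This deviation contradicts uniformity once $N$ is large, and $\acl$ being a free factor together with uniformity then forces $H=\acl(H)$. I would take the cleaner profinite route and cite \cite{GJ} for the descent, noting that it is exactly the free-factor case of the orbit-intersection property of Corollary~\ref{cor:orbit-intersection}.
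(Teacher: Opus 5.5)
Your proof is correct, but it does not follow the paper's route: the paper gives no argument and imports the result from \cite{PP}, whose proof is combinatorial. Parzanchevski and Puder use only the symmetric groups $S_N$. They analyse the random action of $H$ on $\{1,\dots,N\}$ through Stallings core graphs and the algebraic extensions of $H$. They then show that if $H$ is not a free factor, its critical extensions (the minimal-rank algebraic extensions in which $H$ is not a free factor) contribute a correction term that uniformity forbids. Your parenthetical sketch of this route is loose, since that term need not come from $\acl_{F_n}(H)$, but you do not rely on it.

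Your actual argument runs as follows: the elementary forward direction; the bound $r\le n$ from full support of $\mu_{H,G}$; the measure equivalence $H\to\langle x_1,\dots,x_r\rangle$, since both measures are uniform; Proposition~\ref{prop:lerf}, which makes $\overline H$ a free profinite factor of $\widehat{F_n}$; and the Garrido--Jaikin-Zapirain theorem to descend to $F_n$. This is the profinite approach of \cite{GJ}. It is short and fits the paper's orbit principle.

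Each route buys something different. Yours uses the hypothesis on all finite groups, because Lemma~\ref{lem:gj} quantifies over every finite $K$. The argument of \cite{PP} needs only symmetric groups and gives quantitative information along the way. In your route the depth is moved into the descent theorem rather than removed.

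One caution. The descent must be cited from \cite{GJ} directly. The paper's own corollary that free factors are setwise rigid is deduced from this very proposition via Proposition~\ref{prop:invariants}(iii). So invoking that corollary, or the orbit-intersection reformulation you mention, as the justification would be circular.
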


\subsection{Verbal subgroups and algebraic closure}\label{sec:prelim-tools}

For a set of words $V$ and a group $P$, let $V(P)$ denote the subgroup generated
by all values of words from $V$ on $P$. This subgroup is fully invariant. For
the profinite group $\widehat{F_n}$, we use $V(\widehat{F_n})$ for the
\emph{closed} subgroup topologically generated by these values.
\begin{lemma}\label{lem:verbal-closure}
For every set of words $V$, one has
$V(\widehat{F_n})=\overline{V(F_n)}$.
\end{lemma}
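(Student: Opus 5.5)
We prove the two inclusions $V(\widehat{F_n})\subseteq\overline{V(F_n)}$ and $\overline{V(F_n)}\subseteq V(\widehat{F_n})$ separately, writing $F=F_n$ and identifying $F$ with its image in $\widehat F$ (finitely generated free groups are residually finite).

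\emph{The easy inclusion.} Every value $w(g_1,\dots,g_k)$ with $g_i\in F$ and $w\in V$ is also a value of $w$ on $\widehat F$. Hence $V(F)\le V(\widehat F)$, and because $V(\widehat F)$ is closed by definition, $\overline{V(F)}\le V(\widehat F)$.

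\emph{The main inclusion.} Since $V(\widehat F)$ is topologically generated by the values $w(\gamma_1,\dots,\gamma_k)$ with $\gamma_i\in\widehat F$ and $w\in V$, it suffices to show that each such value lies in $\overline{V(F)}$, which is a closed subgroup. For fixed $w$, the word map $w\colon\widehat F^{\,k}\to\widehat F$ is continuous, because multiplication and inversion in $\widehat F$ are continuous. The subset $F^k$ is dense in $\widehat F^{\,k}$. Choose a net of tuples $(g^{(\lambda)})$ in $F^k$ converging to $(\gamma_1,\dots,\gamma_k)$. Then $w(g^{(\lambda)})\in V(F)$ converges to $w(\gamma_1,\dots,\gamma_k)$, so this value lies in $\overline{V(F)}$. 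As $\overline{V(F)}$ is a closed subgroup containing all these generators, $V(\widehat F)\le\overline{V(F)}$.

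An alternative, quotient-wise form of the main inclusion avoids nets. Take an open normal subgroup $U$ of $\widehat F$. Since $F$ surjects onto $\widehat F/U$, any tuple $(\gamma_i)$ can be replaced modulo $U$ by a tuple $(g_i)\in F^k$. This gives $w(\gamma)\in w(g)U\subseteq V(F)U$. Intersecting over all $U$ yields $\bigcap_U V(F)U=\overline{V(F)}$.

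There is no real obstacle. The only point needing care is that ``closed subgroup topologically generated'' is the closure of the abstract subgroup generated by the values. This ensures that membership of the generators in the closed subgroup $\overline{V(F)}$ is enough, even though $V$ may be infinite and the values need not form a finite set.
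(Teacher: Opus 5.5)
Your proof is correct and follows the paper's argument: $\overline{V(F_n)}\subseteq V(\widehat{F_n})$ holds because $V(\widehat{F_n})$ is closed, and the reverse inclusion holds because each word map is continuous and $F_n^{\,k}$ is dense in $\widehat{F_n}^{\,k}$, so every topological generator of $V(\widehat{F_n})$ lies in the closed subgroup $\overline{V(F_n)}$. Your quotient-wise variant, using $\bigcap_U V(F_n)U=\overline{V(F_n)}$, is an equally valid rephrasing of the same density argument.
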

\begin{proof}
Since $V(F_n)\subseteq V(\widehat{F_n})$ and the latter is closed,
$\overline{V(F_n)}\subseteq V(\widehat{F_n})$. Conversely, each word map
$v\colon\widehat{F_n}^{\,m}\to\widehat{F_n}$ is continuous and $F_n$ is dense in
$\widehat{F_n}$, so every value $v(\hat g_1,\dots,\hat g_m)$ lies in
$\overline{v(F_n^{\,m})}\subseteq\overline{V(F_n)}$. Hence the topological
generators of $V(\widehat{F_n})$ lie in $\overline{V(F_n)}$, giving the reverse
inclusion.
\end{proof}
\begin{proposition}\label{prop:verbal-invariant}
Let $V$ be any set of words. Then $\overline{V(F_n)}$ is invariant under every
continuous automorphism of $\widehat{F_n}$.
\end{proposition}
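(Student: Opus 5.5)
The plan is to combine Lemma~\ref{lem:verbal-closure}, which identifies $\overline{V(F_n)}$ with the closed verbal subgroup $V(\widehat{F_n})$, with the observation that word maps commute with continuous homomorphisms. Fix a continuous automorphism $\Theta$ of $\widehat{F_n}$. It then suffices to show that $\Theta$ maps $V(\widehat{F_n})$ into itself; applying the same argument to $\Theta^{-1}$ (also a continuous automorphism, since $\widehat{F_n}$ is compact Hausdorff) gives the reverse inclusion and hence equality.

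First I check that $\Theta$ sends the generating set into $V(\widehat{F_n})$. For a word $v\in V$ in $m$ variables and $\hat g_1,\dots,\hat g_m\in\widehat{F_n}$, the fact that $\Theta$ is a homomorphism gives
\[
  \Theta\bigl(v(\hat g_1,\dots,\hat g_m)\bigr)=v\bigl(\Theta(\hat g_1),\dots,\Theta(\hat g_m)\bigr),
\]
which is again a value of $v$ on $\widehat{F_n}$. Thus $\Theta$ maps the set $X$ of all $V$-values on $\widehat{F_n}$ into itself, and hence maps the abstract subgroup $\langle X\rangle$ into $\langle X\rangle$.

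Next I pass to closures. Since $\Theta$ is continuous, $\Theta\bigl(\overline{\langle X\rangle}\bigr)\subseteq\overline{\Theta(\langle X\rangle)}\subseteq\overline{\langle X\rangle}=V(\widehat{F_n})$. Combining this with Lemma~\ref{lem:verbal-closure} gives $\Theta\bigl(\overline{V(F_n)}\bigr)\subseteq\overline{V(F_n)}$, and the symmetric argument with $\Theta^{-1}$ yields equality.

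There is no serious obstacle here. The only point requiring care is that invariance must be stated for the closure $\overline{V(F_n)}$ rather than for $V(F_n)$ itself, since a profinite automorphism need not preserve $F_n$. That is why the argument runs through the intrinsic description $V(\widehat{F_n})$ supplied by Lemma~\ref{lem:verbal-closure}. Continuity of $\Theta$ is used only in the passage to closures.
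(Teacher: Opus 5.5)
Your proposal is correct and follows essentially the same route as the paper: Lemma~\ref{lem:verbal-closure} gives $\overline{V(F_n)}=V(\widehat{F_n})$, the identity $\Theta(v(\hat g_1,\dots,\hat g_m))=v(\Theta\hat g_1,\dots,\Theta\hat g_m)$ shows that $\Theta$ maps values of $V$ to values of $V$, continuity carries this to the closure, and applying the argument to $\Theta^{-1}$ gives equality. You also state explicitly the small point that $\Theta^{-1}$ is continuous because $\widehat{F_n}$ is compact Hausdorff; the paper leaves this implicit when it calls $\Theta$ a homeomorphism.
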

\begin{proof}
By Lemma~\ref{lem:verbal-closure}, $\overline{V(F_n)}=V(\widehat{F_n})$. For any
$\Theta\in\Aut(\widehat{F_n})$ the identity
\[
  \Theta\bigl(v(\hat g_1,\dots,\hat g_m)\bigr)=v(\Theta\hat g_1,\dots,\Theta\hat g_m)
\]
shows that $\Theta$ carries each value of $V$ to another such value, so it
preserves the abstract subgroup they generate, and being a homeomorphism, its
closure; applying this to $\Theta^{-1}$ gives equality.
\end{proof}
We use the free-factor notion of algebraic closure from
\cite[Section 3.4]{MVW}.
\begin{definition}\label{def:acl}
Let $K\le F$ be a finitely generated subgroup of a free group. We say $K$ is
\emph{algebraic} in $F$ if it is contained in no proper free factor of $F$. For
every finitely generated $K\le F$ there is a unique free factor $\acl_F(K)$
containing $K$ in which $K$ is algebraic; it is the smallest free factor of $F$
containing $K$, and we call it the \emph{algebraic closure} of $K$ in $F$.
Equivalently,
\[
  \acl_F(K)=\bigcap\,\{A\le F: A\text{ is a free factor of }F,\ K\le A\},
\]
the intersection being a free factor because a smallest member of the family
exists and equals it.
\end{definition}
The next lemma records the ambient-independence of algebraic closure along free
factor inclusions. The free-factor hypothesis is essential.
\begin{lemma}\label{lem:acl-trans}
Let $K\le P\le F$ with $F$ free of finite rank, $P$ a free factor of $F$ and $K$
finitely generated. Then $\acl_F(K)=\acl_P(K)$.
\end{lemma}
\begin{proof}
A free factor of a free factor is a free factor: if $F=P*R$ and $P=Q*S$ then $F=Q*S*R$. Hence $\acl_P(K)$ is a free factor of $F$ containing $K$, and minimality gives $\acl_F(K)\le\acl_P(K)\le P$.

For the reverse inclusion, put $A=\acl_F(K)$ and write $F=A*R$. Since $A\le P$, the Kurosh decomposition of $P$ relative to $F=A*R$ contains $P\cap A=A$ as a free factor. Thus $A$ is a free factor of $P$. Because $K\le A$, the minimality of $\acl_P(K)$ gives $\acl_P(K)\le A=\acl_F(K)$, and equality follows.
\end{proof}
We shall also use monotonicity: if $K\le K'\le F$, then
$\acl_F(K)\le\acl_F(K')$. Along a compatible free product decomposition,
algebraic closure behaves factorwise.
\begin{lemma}\label{lem:acl-freeprod}
Let $F=A_1*\dots*A_k$ be an internal free product decomposition of a finitely
generated free group, let $H_i\le A_i$ be finitely generated, and put
$H=H_1*\dots*H_k$. Then
\[
  \acl_F(H)=\acl_F(H_1)*\dots*\acl_F(H_k),
\]
an internal free product; in particular $H$ is algebraic in $F$ if and only if
each $H_i$ is algebraic in $A_i$.
\end{lemma}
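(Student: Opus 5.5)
Put $C_i:=\acl_{A_i}(H_i)$. By Lemma~\ref{lem:acl-trans}, applied to $H_i\le A_i\le F$ with $A_i$ a free factor of $F$, we have $C_i=\acl_F(H_i)$. Each $C_i$ is a free factor of $A_i$, so write $A_i=C_i*D_i$. Then
\[
  F=C_1*D_1*\dots*C_k*D_k ,
\]
so $C:=C_1*\dots*C_k$ is a free factor of $F$. It contains $H$, hence $\acl_F(H)\le C$ by minimality of the algebraic closure. The whole task is the reverse inclusion $C\le\acl_F(H)$. Internality of the free product $C_1*\dots*C_k$ is automatic, since the $C_i$ sit in distinct factors $A_i$ of $F$.

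For the reverse inclusion, put $B:=\acl_F(H)$; it is a free factor of $F$ containing $H$. Consider the inclusions $H_i\le A_i\cap B\le A_i$. I would use the Kurosh subgroup theorem, as in the proof of Lemma~\ref{lem:acl-trans}. Write $F=B*R$ and apply Kurosh to the subgroup $A_i$ relative to this decomposition. The conjugates $A_i\cap gBg^{-1}$ appear among the Kurosh factors, and $g=1$ can be chosen as one of the double coset representatives. Hence $A_i\cap B$ is a free factor of $A_i$. It contains $H_i$, so minimality gives $C_i\le A_i\cap B\le B$ for each $i$. Therefore $C=\langle C_1,\dots,C_k\rangle\le B$, and equality follows. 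The main point to check carefully is exactly this step: for a free factor $B$ of $F$ and any subgroup $A\le F$, the intersection $A\cap B$ is a free factor of $A$. This is the standard Kurosh consequence, and the proof of Lemma~\ref{lem:acl-trans} already uses it in the special case $A\le P$. Finite generation is not needed here, because Kurosh gives a free factor regardless of rank, and in any case $A_i\cap B$ is finitely generated, being a free factor of the finitely generated group $A_i$.

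For the ``in particular'' statement, first suppose $H$ is algebraic in $F$, so $\acl_F(H)=F$. The decomposition now reads $C_1*\dots*C_k=F=A_1*\dots*A_k$ with $C_i\le A_i$. Applying the retraction $F\to A_i$ that kills the other factors $A_j$ sends $C$ onto $C_i$ and $F$ onto $A_i$. Hence $C_i=A_i$, meaning $H_i$ is algebraic in $A_i$. Conversely, if each $C_i=A_i$, then $\acl_F(H)=A_1*\dots*A_k=F$.

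Note that ``algebraic in $A_i$'' is the correct relative notion. Using Lemma~\ref{lem:acl-trans}, it coincides with $\acl_F(H_i)=A_i$.
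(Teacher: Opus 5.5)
Your proof is correct and follows the paper's argument: the decomposition $A_i=C_i*D_i$ gives $\acl_F(H)\le C_1*\dots*C_k$, and minimality gives the reverse inclusion. The one difference is that you route the reverse inclusion through Kurosh (showing $A_i\cap\acl_F(H)$ is a free factor of $A_i$); this is valid but unnecessary. You already know $C_i=\acl_F(H_i)$, and $\acl_F(H)$ is a free factor of $F$ containing $H_i$, so minimality in $F$ (the monotonicity the paper cites) gives $C_i\le\acl_F(H)$ at once. Your retraction argument for $\acl_F(H)=F\Rightarrow C_i=A_i$ usefully makes explicit a step the paper leaves implicit.
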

\begin{proof}
Put $A_i'=\acl_F(H_i)$, which equals $\acl_{A_i}(H_i)$ by
Lemma~\ref{lem:acl-trans} and is therefore a free factor of $A_i$; write
$A_i=A_i'*C_i$. Then
\[
  F=(A_1'*\dots*A_k')*(C_1*\dots*C_k),
\]
so $F':=A_1'*\dots*A_k'$ is a free factor of $F$ containing $H$, whence
$\acl_F(H)\le F'$. Conversely $A_i'=\acl_F(H_i)\le\acl_F(H)$ by monotonicity
for every $i$, so $F'\le\acl_F(H)$. Hence $\acl_F(H)=F'$; the final assertion
is the case $A_i'=A_i$ for all $i$.
\end{proof}
\begin{definition}\label{def:free-prof-factor}
Let $\Fhat$ be the profinite completion of a finitely generated free group. A
closed subgroup $\widehat L\le\Fhat$ is a \emph{free profinite factor} of
$\Fhat$ if there is a closed subgroup $\widehat M\le\Fhat$ with
$\Fhat=\widehat L\amalg\widehat M$, where $\amalg$ denotes the free product in
the category of profinite groups \cite[Ch.~9]{RZ}. Throughout, $\rk$ denotes the
(topological) rank of a free, or free profinite, group.
\end{definition}
\begin{lemma}\label{lem:smallest-factor}
Let $F$ be a finitely generated free group and $K\le F$ a non-trivial finitely
generated subgroup. If $\widehat L$ is a free profinite factor of $\Fhat$ with
$\overline K\le\widehat L$, then $\overline{\acl_F(K)}\le\widehat L$.
Consequently $\overline{\acl_F(K)}$ is the smallest free profinite factor of
$\Fhat$ containing $\overline K$.
\end{lemma}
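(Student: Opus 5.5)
The plan is to show that free profinite factors meet the discrete group in free factors, and then conclude by minimality of $\acl_F(K)$.

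\textbf{Reduction.} Let $\widehat L$ be a free profinite factor of $\Fhat$ with $\overline K\le\widehat L$, and put $A:=\widehat L\cap F$. Then $K\le A$. It suffices to show that $A$ is a free factor of $F$ and that $\overline A=\widehat L$. Granting this, Definition~\ref{def:acl} gives $\acl_F(K)\le A$, hence $\overline{\acl_F(K)}\le\overline A=\widehat L$.

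\textbf{Key steps.}
\begin{enumerate}
\item Write $\Fhat=\widehat L\amalg\widehat M$. The retraction $\pi\colon\Fhat\to\widehat L$, which kills $\widehat M$, is a continuous epimorphism. So $\widehat L$ is a finitely generated free profinite group and a retract of $\Fhat$, with $\rk\widehat L+\rk\widehat M=\rk F$.
\item Show that $A$ is finitely generated with $\overline A=\widehat L$. A free factor of a free profinite group is closed, and I expect $A$ to be a retract of $F$ in the appropriate sense.
\item To get a free factor, I would use the theory of \cite{GJ}, which I expect the author also invokes. The relevant point is that a finitely generated subgroup $A\le F$ whose closure is a free profinite factor of $\Fhat$ is itself a free factor. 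This is the profinite detection of free factors, equivalently Proposition~\ref{prop:uniform} combined with Lemma~\ref{lem:gj}: if $\overline A$ is a free profinite factor, then every finite-group measure of $A$ is uniform, since restriction from $\Hom(\widehat L\amalg\widehat M,G)=\Hom(\widehat L,G)\times\Hom(\widehat M,G)$ to $\widehat L$ is uniform. Proposition~\ref{prop:uniform} then makes $A$ a free factor.
\end{enumerate}

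\textbf{Main obstacle.} The hard part is producing a finitely generated discrete subgroup whose closure is exactly $\widehat L$. A priori $\widehat L\cap F$ could be small, and $\widehat L$ need not be the closure of anything discrete.

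To avoid this, I would instead work with $K':=\acl_F(K)$ directly and push it into $\widehat L$:
\begin{enumerate}
\item Consider the intersection $\widehat L\cap\overline{K'}$. Since $\overline{K'}$ is a free profinite factor of $\Fhat$ (closures of free factors are free profinite factors), and the intersection of a free profinite factor with a closed subgroup is a free profinite factor of that subgroup by the profinite Kurosh theorem for finitely generated free factors \cite[Ch.~9]{RZ}, the intersection $\widehat L\cap\overline{K'}$ is a free profinite factor of $\overline{K'}\cong\widehat{K'}$.
\item This intersection contains $\overline K$. The restriction of measures from $\widehat{K'}$ to it is uniform, and so is the restriction further down to $\overline K$.
\item I then need algebraicity of $K$ in $K'$ to prevent $\overline K$ from lying in a proper free profinite factor of $\widehat{K'}$. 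This is the genuine content, and I would cite \cite{GJ} (or \cite{JSZ}) for the fact that algebraic subgroups remain "profinitely algebraic." That forces $\widehat L\cap\overline{K'}=\overline{K'}$, so $\overline{K'}\le\widehat L$.
\end{enumerate}

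\textbf{Final assertion.} It follows immediately: $\overline{\acl_F(K)}$ is itself a free profinite factor, because $F=\acl_F(K)*R$ gives $\Fhat=\overline{\acl_F(K)}\amalg\overline R$. It contains $\overline K$, and by the above it is contained in every free profinite factor that contains $\overline K$.
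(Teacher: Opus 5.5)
Your fallback argument has a genuine gap. It rests on two outside results, and neither applies as stated; these are exactly the two places where the content of the lemma lies.

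In Step~1 you assert that a free profinite factor meets an arbitrary closed subgroup in a free profinite factor of that subgroup, citing the Kurosh theorem of \cite[Ch.~9]{RZ}. That theorem concerns \emph{open} subgroups, and $\overline{K'}$ has infinite index unless $K'=F$. For closed subgroups the assertion is false in general. Take $p$ odd and the closed subgroup $H=\mathbb Z_p\rtimes C_2$ of $C_2\amalg C_2\cong\Zhat\rtimes C_2$. It meets the first factor $\langle a\rangle$ in $\langle a\rangle$. But for $t\in\mathbb Z_p$ we have $tat^{-1}a^{-1}=t^2$, and squaring is onto $\mathbb Z_p$. So $a$ normally generates $H$, and $\langle a\rangle$ cannot be a free factor of the infinite group $H$. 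The special case you actually need is that two free profinite factors $\widehat L$ and $\overline{K'}$ of $\Fhat$ meet in a free factor of $\overline{K'}$. That is not covered by \cite[Ch.~9]{RZ} and would need its own proof.

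Step~3 then cites the fact that $\overline K$ lies in no proper free profinite factor of $\overline{K'}$ when $K$ is algebraic in $K'$. That is precisely the lemma in the special case where the ambient group is the algebraic closure, which is where all the difficulty sits. The paper has to prove it, by adapting \cite[Proposition~6.2]{JSZ}, which is stated for words, so you cannot use it as a black box. After the unsupported reduction of Step~1, nothing has actually been proved. Separately, the second sentence of Step~2 is false, though you never use it. The measure induced on $\Hom(\overline K,G)$ from $\widehat{K'}$ is uniform only when $K$ is a free factor of $K'$; it fails, for example, for $\langle x^2\rangle\le\langle x\rangle$.

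The missing idea is a way to place a \emph{discrete} free factor containing $K$ inside $\widehat L$. The paper applies the relative splitting theorem \cite[Theorem~4.1]{JSZ} to $K\le\widehat L$. This gives a free factor $F_1\le F$ with $K\le F_1$ and $\overline{F_1}$ inside a conjugate of $\widehat L$ or of $\widehat M$. Since $1\neq\overline K\le\overline{F_1}\cap\widehat L$, and distinct conjugates of free profinite factors meet trivially \cite[Proposition~9.1.12]{RZ}, that conjugate is $\widehat L$ itself. Discrete minimality then gives $\acl_F(K)\le F_1$, so $\overline{\acl_F(K)}\le\overline{F_1}\le\widehat L$.

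This is closer to your abandoned first approach than to your fallback. You dropped the first approach because $\widehat L$ need not be the closure of a discrete subgroup. But you never need $\overline A=\widehat L$, only some discrete free factor $F_1\supseteq K$ with $\overline{F_1}\le\widehat L$. With that, no Kurosh-type statement about closed subgroups of $\Fhat$ is required. Your final paragraph, showing that $\overline{\acl_F(K)}$ is itself a free profinite factor, is correct.
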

\begin{proof}
Write $\Fhat=\widehat L\amalg\widehat M$. The argument of
\cite[Proposition 6.2]{JSZ} applies verbatim after replacing the cyclic
subgroup generated by a word with $K$ itself; we include the short reduction
to make this point explicit.  Apply the relative splitting theorem
\cite[Theorem 4.1]{JSZ} to the subgroup $K\le\widehat L$.  It gives a free
factor $F_1\le F$ containing $K$ such that $\overline{F_1}$ lies in a conjugate
of $\widehat L$ or of $\widehat M$.  Since
$1\neq\overline K\le\overline{F_1}\cap\widehat L$, the
trivial-intersection property for distinct conjugates of free profinite
factors \cite[Proposition 9.1.12]{RZ} forces that conjugate to be
$\widehat L$ itself.  Thus $\overline{F_1}\le\widehat L$.  As
$\acl_F(K)\le F_1$, we obtain
$\overline{\acl_F(K)}\le\overline{F_1}\le\widehat L$.
\end{proof}
\begin{lemma}\label{lem:free-prof-prod}
Let $F=A_1*\dots*A_k$ be an internal free product decomposition of a finitely
generated free group. Then, inside $\Fhat$, the closures satisfy
\[
  \Fhat=\overline{A_1}\amalg\dots\amalg\overline{A_k},
\]
a free profinite product.
\end{lemma}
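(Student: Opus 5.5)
We prove Lemma~\ref{lem:free-prof-prod} by checking the universal property of the free profinite product directly for the family of closed subgroups $\overline{A_1},\dots,\overline{A_k}$ of $\Fhat$. There are two standard routes, and we take the universal-property one. First, by induction on $k$ (using associativity of $\amalg$ and of $*$) it suffices to treat $k=2$, but the argument works uniformly, so we treat general $k$. Since each $A_i$ is a free factor of $F$, it is finitely generated and closed in the profinite topology (free factors are retracts, hence separable). Moreover the profinite topology of $F$ induces the full profinite topology on $A_i$: every finite-index normal subgroup $N\trianglelefteq A_i$ is of the form $A_i\cap \pi_i^{-1}(N)$, where $\pi_i\colon F\to A_i$ is the retraction killing the other factors. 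Hence the inclusion $A_i\hookrightarrow F$ induces an isomorphism $\widehat{A_i}\cong\overline{A_i}\le\Fhat$; alternatively this follows from LERF as in the proof of Proposition~\ref{prop:lerf}.

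Next we verify the universal property. Let $\mathcal K$ be a profinite group and $\phi_i\colon\overline{A_i}\to\mathcal K$ continuous homomorphisms. Restricting to the dense subgroups $A_i$ gives abstract homomorphisms $A_i\to\mathcal K$, and by the universal property of the discrete free product these assemble into a unique homomorphism $\phi\colon F\to\mathcal K$. By the universal property of the profinite completion, $\phi$ extends uniquely to a continuous $\widehat\phi\colon\Fhat\to\mathcal K$. Its restriction to $\overline{A_i}$ agrees with $\phi_i$ on the dense subgroup $A_i$, hence on all of $\overline{A_i}$ by continuity and the Hausdorff property. Uniqueness holds because $\Fhat$ is topologically generated by $\bigcup_i A_i$, so any two continuous maps agreeing on each $\overline{A_i}$ agree on $F$ and hence everywhere.

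Finally, we identify the result with the internal free profinite product. The universal property shows that the canonical map $\overline{A_1}\amalg\dots\amalg\overline{A_k}\to\Fhat$ induced by the inclusions has an inverse. Concretely, take $\mathcal K$ to be the abstract free profinite product and $\phi_i$ the canonical embeddings; this produces a continuous map $\Fhat\to\coprod\overline{A_i}$. The two composites are the identity on generators, hence the identity. The only point needing care, and the one we expect to be the main obstacle, is the first step: the closure $\overline{A_i}$ must genuinely be a copy of $\widehat{A_i}$, so that the $\phi_i$ on $\overline{A_i}$ correspond exactly to continuous maps on $\widehat{A_i}$. This is handled by the retraction argument above. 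Alternatively one may cite the standard fact \cite[Ch.~9]{RZ} that the profinite completion of a free product of finitely many residually finite groups, whose profinite topologies are induced, is the free profinite product of the completions.
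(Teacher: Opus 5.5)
Your proof is correct, but it takes a different route from the paper's.

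The paper works with bases. It identifies each $\overline{A_i}$ with $\widehat{A_i}$, free profinite of rank $\rk A_i$, via Hall's theorem. It then observes that $\Fhat$ is free profinite on $X=\bigsqcup_i X_i$, and that $\widehat{A_1}\amalg\dots\amalg\widehat{A_k}$ is also free profinite on $X$ by \cite[Ch.~9]{RZ}. It concludes because the canonical map carries basis to basis.

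You instead verify the coproduct universal property for $\Fhat$ directly. You combine the universal property of the discrete free product with that of the profinite completion, and check agreement on the dense subgroups $A_i$. Your route is more self-contained and more general. The same argument shows that the profinite completion of any free product $A_1*\dots*A_k$ is the free profinite product of the closures of the $A_i$, with no bases involved.

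Your argument never actually uses the identification $\overline{A_i}\cong\widehat{A_i}$. The $\phi_i$ are given as continuous maps on $\overline{A_i}$, and you compare $\widehat\phi|_{\overline{A_i}}$ with $\phi_i$ only on the dense subgroup $A_i$. So the step you flag as the main obstacle is logically superfluous for the lemma as stated. It is nonetheless correct, since your retraction argument shows the profinite topology of $F$ induces the full profinite topology on $A_i$.

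That identification is what the paper's basis-theoretic version delivers automatically. It is also what is used later, for instance to say that the free profinite factors $\overline{A_i}$ have rank $\rk A_i$.
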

\begin{proof}
Each $A_i$ is a free factor, hence finitely generated and, by M.~Hall's theorem,
closed in the profinite topology of $F$; its closure in $\Fhat$ is therefore
canonically $\widehat{A_i}$, free profinite of rank $\rk A_i$. Choose a basis
$X_i$ of $A_i$, so that $X=\bigsqcup_iX_i$ is a basis of $F$ and $\Fhat$ is the
free profinite group on $X$. The free profinite product
$\widehat{A_1}\amalg\dots\amalg\widehat{A_k}$ is free profinite on $X$ as well
\cite[Ch.~9]{RZ}, and the canonical continuous homomorphism
$\widehat{A_1}\amalg\dots\amalg\widehat{A_k}\to\Fhat$ carries the basis $X$ to
the basis $X$; hence it is an isomorphism.
\end{proof}
\begin{lemma}\label{lem:measure-stable}
Let $F$ and $P$ be finitely generated free groups and let $K\le F$ be finitely
generated. Then, for every finite group $G$ and every $h\in\Hom(K,G)$,
\[
  \Pr\nolimits^{F}_{K,G}(h)=\Pr\nolimits^{F*P}_{K,G}(h).
\]
\end{lemma}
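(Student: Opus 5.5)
The plan is a direct counting argument based on the identification $\Hom(F*P,G)\cong\Hom(F,G)\times\Hom(P,G)$. This identification comes from the universal property of the free product: a homomorphism $f\colon F*P\to G$ is the same thing as a pair $(f|_F,f|_P)$, and any pair of homomorphisms on the factors extends uniquely. In particular
\[
  |\Hom(F*P,G)|=|\Hom(F,G)|\cdot|\Hom(P,G)|.
\]
This product is nonzero, because the trivial homomorphism always exists. Freeness of $F$ and $P$ is not really needed here: the free product alone suffices.

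The key steps, in order, are as follows.

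First, fix a finite group $G$ and $h\in\Hom(K,G)$. Since $K\le F$, restricting $f\in\Hom(F*P,G)$ to $K$ factors through its restriction to $F$: $f|_K=(f|_F)|_K$.

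Second, conclude that under the bijection above, the set
\[
  \{f\in\Hom(F*P,G): f|_K=h\}
\]
corresponds exactly to $\{g\in\Hom(F,G): g|_K=h\}\times\Hom(P,G)$. Its cardinality is therefore $|\{g: g|_K=h\}|\cdot|\Hom(P,G)|$.

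Third, divide by $|\Hom(F*P,G)|=|\Hom(F,G)|\,|\Hom(P,G)|$. The factor $|\Hom(P,G)|$ cancels, leaving $\Pr^F_{K,G}(h)$.

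There is no real obstacle. The only point needing care is the second step: the condition $f|_K=h$ involves only the $F$-coordinate, so the $P$-coordinate ranges freely over all of $\Hom(P,G)$. This holds precisely because $K$ lies inside the factor $F$, rather than being a general subgroup of $F*P$. Equivalently, the uniform measure on $\Hom(F*P,G)$ is the product of the uniform measures, and the restriction map to $\Hom(K,G)$ factors through the projection to the first coordinate, so the pushforward measure is unchanged.
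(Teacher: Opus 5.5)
Your proposal is correct and is essentially the paper's own proof. The paper likewise uses the identification $\Hom(F*P,G)\cong\Hom(F,G)\times\Hom(P,G)$, notes that restriction to $K\le F$ depends only on the first coordinate, and cancels the factor $|\Hom(P,G)|$.
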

\begin{proof}
The natural identification
$\Hom(F*P,G)\cong\Hom(F,G)\times\Hom(P,G)$ identifies the uniform measure with
the product measure, while restriction to $K\le F$ depends only on the first
coordinate. Hence
\begin{align*}
  \Pr\nolimits^{F*P}_{K,G}(h)
  &=\frac{|\{\varphi\in\Hom(F*P,G)\mid\varphi|_K=h\}|}{|\Hom(F*P,G)|}\\[4pt]
  &=\frac{|\{\psi\in\Hom(F,G)\mid\psi|_K=h\}|\,|\Hom(P,G)|}
         {|\Hom(F,G)|\,|\Hom(P,G)|}
   =\Pr\nolimits^{F}_{K,G}(h). \qedhere
\end{align*}
\end{proof}

\section{Setwise versus marked profinite rigidity}\label{sec:setwise-vs-marked}

Marked rigidity implies setwise rigidity by definition. In this section we
identify the precise obstruction to the converse, record invariants detected by
the induced measures, and prove stability under free extension.

\subsection{Measure-preserving automorphisms}\label{sec:mp}

Throughout this subsection, $G$ is finitely generated and $H\le G$ is finitely
generated. We first record two elementary facts: ambient
automorphisms induce measure equivalences, and measure equivalences are closed
under composition and inversion.

\begin{lemma}\label{lem:aut}
For every $\psi\in\Aut(G)$ and every $H\le G$, the restriction
$\psi|_H\colon H\to\psi(H)$ is a measure equivalence.
\end{lemma}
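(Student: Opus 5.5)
The plan is a direct computation: composition with $\psi$ gives a bijection on $\Hom(G,K)$, and restriction is compatible with that bijection. Fix a finite group $K$, put $L=\psi(H)$ and $\alpha=\psi|_H\colon H\to L$. I need to show that for every $h\in\Hom(L,K)$,
\[
  \Pr\nolimits^{G}_{L,K}(h)=\Pr\nolimits^{G}_{H,K}(h\circ\alpha).
\]

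The first step is to note that precomposition with $\psi$,
\[
  \psi^*\colon\Hom(G,K)\to\Hom(G,K),\qquad f\mapsto f\circ\psi,
\]
is a bijection, with inverse $(\psi^{-1})^*$. Consequently it preserves the uniform measure on the finite set $\Hom(G,K)$, and the denominators on the two sides agree.

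The second step is to check that $\psi^*$ matches the two fibres whose sizes form the numerators. For $f\in\Hom(G,K)$ we have
\[
  (f\circ\psi)|_H=f|_L\circ\alpha.
\]
Since $\alpha$ is an isomorphism, $f|_L=h$ holds if and only if $f|_L\circ\alpha=h\circ\alpha$. Hence $\psi^*$ maps $\{f: f|_L=h\}$ bijectively onto $\{f': f'|_H=h\circ\alpha\}$. The two numerators are therefore equal, which gives the displayed identity.

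There is no real obstacle here. The only care needed is the direction of the composition, so that the identity comes out as $h\circ\alpha$ and matches \eqref{eq:meq}. Neither LERF nor residual finiteness is used. The argument needs only that $\Hom(G,K)$ is finite, which holds because $G$ is finitely generated.
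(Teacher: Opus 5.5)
Your proposal is correct and is essentially the paper's own proof: precomposition $\psi^*$ is a uniform-measure-preserving bijection of $\Hom(G,K)$, and the identity $(f\circ\psi)|_H=f|_{\psi(H)}\circ\psi|_H$ carries the fibre over $h$ bijectively onto the fibre over $h\circ\psi|_H$. The paper also notes that this argument uses neither LERF nor residual finiteness.
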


\begin{proof}
Fix a finite group $K$.  The map
$\psi^{*}\colon\Hom(G,K)\to\Hom(G,K)$, $\varphi\mapsto\varphi\circ\psi$, is a
bijection, hence preserves the uniform measure.  For $\varphi\in\Hom(G,K)$ put
$\varphi'=\varphi\circ\psi$.  Then
$\varphi'|_{H}=(\varphi|_{\psi(H)})\circ(\psi|_{H})$, so for every
$h\in\Hom(\psi(H),K)$,
\[
  \varphi|_{\psi(H)}=h
  \iff
  \varphi'|_{H}=h\circ\psi|_{H}.
\]
Since $\psi^{*}$ is measure preserving, the two events have equal probability;
that is,
$\Pr_{\psi(H),K}(h)=\Pr_{H,K}(h\circ\psi|_H)$,
which is \eqref{eq:meq} with $L=\psi(H)$ and $\alpha=\psi|_H$.
\end{proof}

\begin{lemma}\label{lem:groupoid}
Measure equivalences are closed under composition and inversion.
\end{lemma}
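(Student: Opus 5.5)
The plan is to verify both closure properties directly from the defining identity \eqref{eq:meq}, using only that precomposition with an isomorphism gives a bijection between the relevant $\Hom$-sets. Throughout, fix a finite group $K$. All statements are relative to a fixed ambient group $G$, and the measures $\Pr_{H,K}$ are those of $G$.

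\emph{Inversion.} Let $\alpha\colon H_1\to H_2$ be a measure equivalence. We must show that for every $g\in\Hom(H_1,K)$,
\[
\Pr\nolimits_{H_1,K}(g)=\Pr\nolimits_{H_2,K}(g\circ\alpha^{-1}).
\]
Put $h:=g\circ\alpha^{-1}\in\Hom(H_2,K)$, so that $h\circ\alpha=g$. Applying \eqref{eq:meq} to $h$ gives $\Pr_{H_2,K}(h)=\Pr_{H_1,K}(g)$, which is exactly the identity required of $\alpha^{-1}$. Since the map $g\mapsto g\circ\alpha^{-1}$ is a bijection $\Hom(H_1,K)\to\Hom(H_2,K)$, every $g$ is covered.

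\emph{Composition.} Let $\alpha\colon H_1\to H_2$ and $\beta\colon H_2\to H_3$ be measure equivalences, and take $h\in\Hom(H_3,K)$. Apply \eqref{eq:meq} for $\beta$ to $h$, and then for $\alpha$ to $h\circ\beta\in\Hom(H_2,K)$. This gives
\[
\Pr\nolimits_{H_3,K}(h)=\Pr\nolimits_{H_2,K}(h\circ\beta)=\Pr\nolimits_{H_1,K}(h\circ\beta\circ\alpha),
\]
which is \eqref{eq:meq} for $\beta\circ\alpha$. One could also note that identity maps are trivially measure equivalences, so these measure equivalences form a groupoid.

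There is no real obstacle here. The only point needing care is the direction of precomposition in \eqref{eq:meq}, together with checking that precomposition by an isomorphism is a bijection on $\Hom$-sets, so that every homomorphism is accounted for in the inversion step.
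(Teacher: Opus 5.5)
Your proposal is correct and matches the paper's proof essentially line for line: you chain the defining identity through $h\circ\beta$ for composition, and for inversion you apply it to $h=g\circ\alpha^{-1}$. Your remark that precomposition with an isomorphism is a bijection on $\Hom$-sets is harmless but not needed, since $h$ is defined directly from each $g$.
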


\begin{proof}
Let $\alpha\colon H\to L$ and $\beta\colon L\to M$ satisfy \eqref{eq:meq}.  For
$h\in\Hom(M,K)$,
\[
  \Pr\nolimits_{M,K}(h)=\Pr\nolimits_{L,K}(h\circ\beta)
   =\Pr\nolimits_{H,K}(h\circ\beta\circ\alpha),
\]
so $\beta\circ\alpha$ is a measure equivalence.  For the inverse, given
$g\in\Hom(H,K)$ apply \eqref{eq:meq} to $h=g\circ\alpha^{-1}$, obtaining
$\Pr_{L,K}(g\circ\alpha^{-1})=\Pr_{H,K}(g)$, which is \eqref{eq:meq} for
$\alpha^{-1}\colon L\to H$.
\end{proof}

By Lemma~\ref{lem:groupoid},
\[
  \MP_G(H):=\{\gamma\in\Aut(H):\gamma\text{ is a measure equivalence }H\to H\}
\]
is a subgroup of $\Aut(H)$; we call it the group of \emph{measure-preserving
automorphisms of $H$ relative to $G$}. Let
\[
  \rho_H\colon\Stab_{\Aut(G)}(H)\longrightarrow\Aut(H),
  \qquad \psi\longmapsto\psi|_H,
\]
be restriction from the setwise stabiliser. Lemma~\ref{lem:aut} gives
$\operatorname{im}\rho_H\le\MP_G(H)$. Theorem~\ref{thm:setwise-marked} says
that equality is precisely the additional condition needed to pass from
setwise to marked rigidity.

\begin{proof}[Proof of Theorem~\ref{thm:setwise-marked}]
$(\Leftarrow)$  Assume (i) and (ii), and let $\alpha\colon H\to L$ be a measure
equivalence with $L\le G$ finitely generated.  By (i), there is
$\psi\in\Aut(G)$ with $\psi(H)=L$.  By Lemma~\ref{lem:aut} the restriction
$\psi|_H\colon H\to L$ is itself a measure equivalence, so by
Lemma~\ref{lem:groupoid}
\[
  \gamma:=(\psi|_H)^{-1}\circ\alpha
\]
is a measure equivalence $H\to H$, that is, $\gamma\in\MP_G(H)$.  By (ii) there
is $\delta\in\Stab_{\Aut(G)}(H)$ with $\delta|_H=\gamma$.  Put
$\varphi:=\psi\circ\delta\in\Aut(G)$.  Then
\[
  \varphi(H)=\psi(\delta(H))=\psi(H)=L,
  \qquad
  \varphi|_H=(\psi|_H)\circ\gamma=\alpha .
\]
Thus $H$ is profinitely rigid in $G$.

$(\Rightarrow)$  Assume $H$ is profinitely rigid in $G$.  Condition (i) is
immediate from the definitions.  For (ii), let $\gamma\in\MP_G(H)$.  Then
$\gamma$ is a measure equivalence $H\to H$, so profinite rigidity supplies
$\varphi\in\Aut(G)$ with $\varphi(H)=H$ and $\varphi|_H=\gamma$.  The first
condition says $\varphi\in\Stab_{\Aut(G)}(H)$ and the second says
$\rho_H(\varphi)=\gamma$; hence $\gamma\in\operatorname{im}\rho_H$.
\end{proof}

When $G$ is LERF---in particular, when $G$ is a finitely generated free
group---Proposition~\ref{prop:lerf} gives the sharper description
\eqref{eq:MP-profinite}.  In that setting condition~(ii) is exactly a
discrete-versus-profinite extension condition.  The abstract
Theorem~\ref{thm:setwise-marked}, however, requires no residual finiteness or
LERF hypothesis.  Condition~(ii) holds, in particular, whenever $\rho_H$ is
surjective.

\begin{proof}[Proof of Corollary~\ref{cor:ff-setwise-marked}]
Write $F=H*H'$.  Given any $\gamma\in\Aut(H)$, the automorphism
$\gamma*\mathrm{id}_{H'}$ of $F$ stabilises $H$ and restricts to $\gamma$ on $H$.
Hence $\rho_H$ is surjective, so
$\operatorname{im}\rho_H=\Aut(H)\supseteq\MP_F(H)$, and condition (ii) of
Theorem~\ref{thm:setwise-marked} holds.  The corollary follows.
\end{proof}

\subsection{Measure invariants}\label{sec:invariants}

We next record several invariants of the measure-equivalence class. Throughout
this subsection, $F$ is free of rank $n$, $H,L\le F$ are nontrivial finitely
generated subgroups, and
$\pi\colon F\to\mathbb Z^n$ is abelianisation, with continuous extension
$\widehat\pi\colon\Fhat\to\Zhat^{\,n}$. The algebraic-closure and abelianisation
invariants below will be used repeatedly in the reduction arguments.

For a subgroup $M\le\mathbb Z^n$ of rank $r$, its \emph{invariant factors}
are the Smith invariants $d_1\mid\cdots\mid d_r$ of the inclusion
$M\hookrightarrow\mathbb Z^n$. Thus, in suitable bases,
$M=\bigoplus_{i=1}^r d_i\mathbb Z e_i$. These invariants record the embedding of
$M$ in $\mathbb Z^n$, not merely its abstract isomorphism type. If $M$ has finite
index, then $r=n$ and all $d_i$ are positive.

\begin{lemma}\label{lem:abelian-invariant}
Let $H,L\le F_n$ be finitely generated subgroups which are measure equivalent via
an isomorphism $\alpha$.  Then the images $\pi(H)$ and $\pi(L)$ in
$\mathbb Z^{n}$ have the same invariant factors; in particular they lie in the
same $GL_n(\mathbb Z)$-orbit.
\end{lemma}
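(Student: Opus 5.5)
Via Proposition~\ref{prop:lerf}, $\alpha$ is the restriction of some $\Theta\in\Aut(\Fhat)$. I will push $\Theta$ down to the abelianisation and compare finite quotients there.

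First, $\Theta$ preserves $\overline{[F,F]}$ by Proposition~\ref{prop:verbal-invariant}, with $V=\{[x,y]\}$. Since $\ker\widehat\pi=\overline{[F,F]}$, $\Theta$ induces a continuous automorphism $\bar\Theta\in\Aut(\Zhat^{\,n})=GL_n(\Zhat)$ satisfying $\widehat\pi\circ\Theta=\bar\Theta\circ\widehat\pi$. As $\Theta(\overline H)=\overline L$ and $\widehat\pi$ is continuous with compact domain, $\widehat\pi(\overline H)=\overline{\pi(H)}$. Hence
\[
  \bar\Theta\bigl(\overline{\pi(H)}\bigr)=\overline{\pi(L)}\quad\text{in }\Zhat^{\,n}.
\]

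Next I pass to finite levels. For each $N\ge1$, reduce modulo $N$. Then $\bar\Theta$ induces an automorphism of $(\mathbb Z/N)^n$ carrying the image $\pi(H)+N\mathbb Z^n$ onto $\pi(L)+N\mathbb Z^n$. The isomorphism type of the quotient $(\mathbb Z/N)^n/\text{image}$ is therefore the same for $H$ and $L$. If $\pi(H)$ has Smith invariants $d_1\mid\dots\mid d_r$, padded with zeros to length $n$, then
\[
  \mathbb Z^n/(\pi(H)+N\mathbb Z^n)\cong\bigoplus_{i=1}^n\mathbb Z/\gcd(d_i,N),
\]
where $\gcd(0,N)=N$. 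The same formula holds for $L$ with its invariants $e_i$.

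Finally I recover the invariants from these quotients. Knowing the multiset $\{\gcd(d_i,N)\}_i$ for every $N$ determines the sequence $(d_i)$. Concretely, the number of indices $i$ with $p^k\mid d_i$ equals the number of cyclic factors of order divisible by $p^k$ when $N=p^K$ with $K>k$ large. Letting $K\to\infty$ also counts the zeros, which recovers $r$. So the padded sequences of $H$ and $L$ coincide, and the invariant factors agree. The $GL_n(\mathbb Z)$-orbit statement follows from the Smith normal form: both images are $GL_n(\mathbb Z)$-translates of $\bigoplus d_i\mathbb Z e_i$.

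The step I expect to need the most care is this last recovery, in particular detecting the rank $r$ (the zero invariants) from finite data. The count above handles it, since only zero entries have $\gcd(d_i,p^K)$ unbounded as $K$ grows. Alternatively, I could read off $r$ as the $\Zhat$-rank of $\overline{\pi(H)}$.
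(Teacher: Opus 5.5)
Your argument is correct and follows the paper's proof: you descend $\Theta$ to $\Theta^{\mathrm{ab}}\in GL_n(\Zhat)$ and use $\Theta^{\mathrm{ab}}(\overline{\pi(H)})=\overline{\pi(L)}$. The only difference is the last step: you read the invariant factors off the finite quotients $\mathbb Z^n/(\pi(H)+p^K\mathbb Z^n)$, whereas the paper splits $\Zhat=\prod_p\mathbb Z_p$ and cites completeness of elementary divisors for $\mathbb Z_p$-lattices; your data are the finite-level reductions of the same $p$-adic invariants, so this is a slightly more elementary variant of the same argument.
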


\begin{proof}
By Proposition~\ref{prop:lerf}, there is $\Theta\in\Aut(\widehat{F_n})$ with
$\Theta|_{\overline H}=\widehat\alpha$, so $\Theta(\overline H)=\overline L$.
The kernel of $\widehat\pi$ is the closed derived subgroup of $\widehat{F_n}$,
which by Lemma~\ref{lem:verbal-closure} (with $V=\{[x,y]\}$) equals
$\overline{[F_n,F_n]}$ and is $\Theta$-invariant by
Proposition~\ref{prop:verbal-invariant}.  Hence $\Theta$ descends to
\[
  \Theta^{\mathrm{ab}}\in\Aut_{\mathrm{cont}}(\Zhat^{\,n})=GL_n(\Zhat).
\]
Put $M=\pi(H)$ and $N=\pi(L)$.  As $\overline H$ is compact,
$\widehat\pi(\overline H)$ is closed; it contains $M$ and is contained in
$\overline M$, so $\widehat\pi(\overline H)=\overline M=M\Zhat$, and likewise
$\widehat\pi(\overline L)=N\Zhat$.  Therefore
\[
  \Theta^{\mathrm{ab}}\bigl(M\Zhat\bigr)=N\Zhat .
\]
Let $d_1\mid\dots\mid d_r$ and $e_1\mid\dots\mid e_s$ be the invariant factors of
$M$ and $N$, so that in suitable bases $M\Zhat=\bigoplus_{i\le r}d_i\Zhat$ and
$N\Zhat=\bigoplus_{j\le s}e_j\Zhat$.  Since $\Zhat=\prod_p\mathbb Z_p$ and
$GL_n(\Zhat)=\prod_pGL_n(\mathbb Z_p)$, the displayed equality says that for
every prime $p$ the $\mathbb Z_p$-lattices $M\mathbb Z_p$ and $N\mathbb Z_p$ are
$GL_n(\mathbb Z_p)$-equivalent.  Over the discrete valuation ring $\mathbb Z_p$,
elementary divisors are a complete invariant, so $r=s$ and $v_p(d_i)=v_p(e_i)$
for all $i$ and all $p$; taking the $d_i,e_i$ positive gives $d_i=e_i$.  Two
subgroups of $\mathbb Z^{n}$ with the same invariant factors are
$GL_n(\mathbb Z)$-equivalent, by Smith normal form.
\end{proof}

\begin{proposition}\label{prop:invariants}
Suppose $H$ and $L$ are measure equivalent via $\alpha$.  Then:
\begin{enumerate}
\item[(i)]   $\rk H=\rk L$;
\item[(ii)]  $H$ has finite index in $F$ if and only if $L$ does, and then
             $[F:H]=[F:L]$;
\item[(iii)] $H$ is a free factor of $F$ if and only if $L$ is;
\item[(iv)]  $\rk\acl_F(H)=\rk\acl_F(L)$; consequently there is
             $\beta\in\Aut(F)$ with $\beta(\acl_F(H))=\acl_F(L)$;
\item[(v)]   the images of $H$ and $L$ in $F^{\mathrm{ab}}\cong\mathbb Z^{n}$
             have the same invariant factors; in particular they lie in the same
             $GL_n(\mathbb Z)$-orbit.
\end{enumerate}
\end{proposition}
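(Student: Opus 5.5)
The plan is to move everything into $\Fhat$. By Proposition~\ref{prop:lerf} there is $\Theta\in\Aut(\Fhat)$ with $\Theta|_{\overline H}=\widehat\alpha$, so $\Theta(\overline H)=\overline L$. Each item then comes from a property that is either an abstract invariant of the pair $(\overline H\le\Fhat)$ or is read off directly from the measures.

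For (i), $\alpha$ is an isomorphism $H\cong L$ of free groups, so the ranks agree at once. For (ii), recall that $H$ has finite index in $F$ exactly when $\overline H$ is open in $\Fhat$. This uses LERF: $\overline H\cap F=H$, and if $\overline H$ is open then $F/H$ embeds into the finite set $\Fhat/\overline H$. Moreover $[F:H]=[\Fhat:\overline H]$ in that case. Since $\Theta$ is a homeomorphism of $\Fhat$, openness and index are preserved, so $[F:H]=[F:L]$. For (v), we simply cite Lemma~\ref{lem:abelian-invariant}.

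For (iii), we use Proposition~\ref{prop:uniform}. $H$ is a free factor iff $\mu_{H,K}$ is uniform on $\Hom(H,K)$ for every finite $K$. Composition with $\alpha$ is a bijection $\Hom(L,K)\to\Hom(H,K)$, and by \eqref{eq:meq} it carries $\mu_{L,K}$ to $\mu_{H,K}$. Hence one measure is uniform iff the other is, and $H$ is a free factor iff $L$ is.

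The main step is (iv). By Lemma~\ref{lem:smallest-factor}, $\overline{\acl_F(H)}$ is the smallest free profinite factor of $\Fhat$ containing $\overline H$; we may assume $H\ne1$, since otherwise $L=1$ and both closures are trivial. The automorphism $\Theta$ permutes free profinite factors: if $\Fhat=\widehat A\amalg\widehat B$, then $\Fhat=\Theta(\widehat A)\amalg\Theta(\widehat B)$, because $\Theta$ transports the universal property. It also preserves containment, so it carries the smallest free profinite factor containing $\overline H$ to the smallest one containing $\overline L$. Thus
\[
  \Theta\bigl(\overline{\acl_F(H)}\bigr)=\overline{\acl_F(L)}.
\]
Since $\acl_F(H)$ is a finitely generated free factor, its closure is free profinite of rank $\rk\acl_F(H)$. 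A continuous isomorphism preserves topological rank, and the rank of a finitely generated free profinite group is an invariant (compute it from $\Hom(-,\mathbb Z/2)$). Hence $\rk\acl_F(H)=\rk\acl_F(L)$.

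For the consequence in (iv), write $F=\acl_F(H)*C=\acl_F(L)*D$. Then $\rk C=n-\rk\acl_F(H)=\rk D$. Choose bases adapted to each splitting and send one to the other, which gives $\beta\in\Aut(F)$ with $\beta(\acl_F(H))=\acl_F(L)$.

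The points needing most care are the claim that $\Theta$ maps free profinite factors to free profinite factors, which is immediate from functoriality of $\amalg$, and the reliance on Lemma~\ref{lem:smallest-factor} for nontrivial subgroups.
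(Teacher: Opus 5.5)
Your proposal is correct and follows the paper's proof essentially item by item. It uses the same $\Theta$ from Proposition~\ref{prop:lerf}, openness and index of closures for (ii), Proposition~\ref{prop:uniform} via the bijection $h\mapsto h\circ\alpha$ for (iii), Lemma~\ref{lem:smallest-factor} together with the fact that $\Theta$ permutes free profinite factors for (iv), and Lemma~\ref{lem:abelian-invariant} for (v). Your separate treatment of $H=1$ is harmless but unnecessary, since the paper assumes $H$ and $L$ nontrivial throughout that subsection.
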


\begin{proof}
By Proposition~\ref{prop:lerf}, there is $\Theta\in\Aut(\Fhat)$ with
$\Theta|_{\overline H}=\widehat\alpha$; in particular
$\Theta(\overline H)=\overline L$.  We use this $\Theta$ throughout.

(i) is immediate since $\alpha$ is an isomorphism.

(ii)  $H$ and $L$ are finitely generated, hence closed in the profinite topology
by M.~Hall's theorem.  A closed subgroup of $F$ has finite index if and only if
its closure is open in $\Fhat$, and in that case $F/H\to\Fhat/\overline H$ is a
bijection of coset spaces, so $[\Fhat:\overline H]=[F:H]$.  A topological
automorphism carries open subgroups to open subgroups and preserves their
indices; applying this to $\Theta$ gives the claim.

(iii)  For every finite $G$, the map $h\mapsto h\circ\alpha$ is a bijection
$\Hom(L,G)\to\Hom(H,G)$ carrying $\mu_{L,G}$ to $\mu_{H,G}$.  Hence $\mu_{H,G}$
is uniform for all $G$ if and only if $\mu_{L,G}$ is, and
Proposition~\ref{prop:uniform} converts this into the assertion.

(iv)  By Lemma~\ref{lem:smallest-factor}, the subgroup $\overline{\acl_F(K)}$ is
the smallest free profinite factor of $\Fhat$ containing $\overline K$, for every
nontrivial finitely generated $K\le F$.  A continuous automorphism of $\Fhat$
carries free profinite factors to free profinite factors and preserves
inclusions, so
\[
  \Theta\bigl(\overline{\acl_F(H)}\bigr)=\overline{\acl_F(L)} .
\]
Both sides are free profinite of ranks $\rk\acl_F(H)$ and $\rk\acl_F(L)$
respectively, and topological rank is an isomorphism invariant, so these ranks
agree.  Free factors of $F$ of equal rank lie in one $\Aut(F)$-orbit: extend
bases of the two factors to bases of $F$ and map one to the other.

(v) is Lemma~\ref{lem:abelian-invariant}.
\end{proof}
As a first consequence, free factors are rigid in both senses.

\begin{corollary}
Every free factor $H$ of a finitely generated free group $F$ is
setwise profinitely rigid, and profinitely rigid, in $F$.
\end{corollary}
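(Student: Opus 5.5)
The plan is to get setwise rigidity from Proposition~\ref{prop:invariants}(iii) and marked rigidity from Corollary~\ref{cor:ff-setwise-marked}.

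\emph{Setwise rigidity.} Let $L\le F$ be finitely generated and measure equivalent to $H$ via some isomorphism $\alpha\colon H\to L$.

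\begin{itemize}
\item[(a)] Proposition~\ref{prop:invariants}(iii) says that $L$ is again a free factor of $F$.
\item[(b)] Proposition~\ref{prop:invariants}(i) gives $\rk L=\rk H$.
\item[(c)] Two free factors of $F$ of the same rank lie in one $\Aut(F)$-orbit. To see this, write $F=H*H'$ and $F=L*L'$. Since $\rk F=\rk H+\rk H'=\rk L+\rk L'$, we get $\rk H'=\rk L'$. Choose bases of $H$ and $L$, and bases of $H'$ and $L'$, of matching cardinalities. A bijection between the resulting bases of $F$ that sends the basis of $H$ to the basis of $L$ extends to some $\varphi\in\Aut(F)$ with $\varphi(H)=L$.
\end{itemize}

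This proves setwise rigidity. The same argument was already used at the end of Proposition~\ref{prop:invariants}(iv).

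\emph{Marked rigidity.} This now follows directly from Corollary~\ref{cor:ff-setwise-marked}, since for a free factor $H$ that corollary shows marked and setwise rigidity coincide. Alternatively, one can argue by hand. Given the measure equivalence $\alpha\colon H\to L$, take the $\varphi$ from the setwise step and put $\gamma=\varphi|_H^{-1}\circ\alpha\in\Aut(H)$. Then $\gamma*\mathrm{id}_{H'}$ lies in $\Stab_{\Aut(F)}(H)$ and restricts to $\gamma$ on $H$. The automorphism $\varphi\circ(\gamma*\mathrm{id}_{H'})$ therefore restricts to $\alpha$ on $H$, as Theorem~\ref{thm:setwise-marked} requires.

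There is no genuine obstacle here. All the depth is in the Parzanchevski--Puder characterisation (Proposition~\ref{prop:uniform}), which Proposition~\ref{prop:invariants}(iii) relies on. The only point needing care is the trivial case $H=1$: then $L$ is isomorphic to $H$ and hence trivial, so both forms of rigidity hold trivially. This also removes any concern about the standing nontriviality hypothesis of \S\ref{sec:invariants}.
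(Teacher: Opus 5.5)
Your proposal is correct and matches the paper's proof: you use Proposition~\ref{prop:invariants}(iii) and (i) to see that $L$ is a free factor of the same rank, then the extend-bases argument to put $L$ in $\Aut(F)\cdot H$, and finally Corollary~\ref{cor:ff-setwise-marked} to upgrade to marked rigidity. Your explicit complement-rank computation and your remark on the trivial case $H=1$ (needed because of the standing nontriviality hypothesis in \S\ref{sec:invariants}) are small additions that the paper leaves implicit.
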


\begin{proof}
Let $L\le F$ be finitely generated and measure equivalent to $H$
via $\alpha$. By Proposition \ref{prop:invariants}(iii), $L$ is a free factor of $F$,
and by (i), it has the same rank as $H$.
Free factors of equal rank lie in a single $\operatorname{Aut}(F)$-orbit:
extend bases of $H$ and $L$ to bases of $F$ and map one extended
basis to the other. Hence $L\in\operatorname{Aut}(F)\cdot H$, so $H$
is setwise profinitely rigid. Since $H$ is a free factor,
Corollary \ref{cor:ff-setwise-marked} upgrades this to profinite rigidity.
\end{proof}
\begin{example}\label{ex:invariants}
The abelianisation invariant in Proposition~\ref{prop:invariants}(v) is
independent of the preceding four invariants.  Take
\[
  H=\langle x_1^{2},\,x_2^{2}\rangle,\qquad L=\langle x_1,\,x_2^{4}\rangle
\]
in $F_2=\langle x_1,x_2\rangle$.  Both are free of rank $2$, both have infinite
index, neither is a free factor, and both have algebraic closure $F_2$; so
(i)--(iv) fail to separate them.  Their abelianised images are
$2\mathbb Z\oplus2\mathbb Z$ and $\mathbb Z\oplus4\mathbb Z$, with invariant
factors $(2,2)$ and $(1,4)$ respectively, so by (v) they are not measure
equivalent.  By contrast $\langle x_1^{2},x_2^{3}\rangle$ and
$\langle x_1^{6},x_2\rangle$ \emph{do} have the same invariant factors $(1,6)$,
by the Chinese remainder theorem, so (v) says nothing about that pair.
\end{example}

\subsection{Universality and the reduction principle}\label{sec:universal}

The marked and setwise statements are proved simultaneously; the only difference is
that in the marked case the witnessing ambient automorphism must realise the prescribed
measure equivalence on $K$.

\begin{proof}[Proof of Theorem~\ref{thm:universal}]
We prove (i) and (ii) simultaneously. Throughout the proof, ``rigid'' means
respectively marked or setwise profinite rigidity. In the marked case we retain
the chosen measure equivalence $\alpha$ and require each witnessing ambient
automorphism to restrict to $\alpha$; in the setwise case this restriction
clause is omitted.

We may assume $K\neq1$, since the trivial subgroup is rigid in both senses in
every ambient group. Put $A=\acl_F(K)$ and write $F=A*C$. Since
$F*P=A*(C*P)$, Lemma~\ref{lem:measure-stable} shows that the measures induced on
$K$ are unchanged by adjoining a free factor. It is therefore enough to prove
the theorem when $K$ is algebraic in $F$, that is, when $\acl_F(K)=F$: the
algebraic case applied to $K\le A$ with free complements $C$ and $C*P$ then
identifies rigidity in $A$, $F$, and $F*P$. In the algebraic case
Lemma~\ref{lem:acl-trans} gives $\acl_{F*P}(K)=F$.

\medskip\noindent\emph{From $F$ to $F*P$.}
Let $L\le F*P$ be finitely generated and choose a measure equivalence
$\alpha\colon K\to L$. By Proposition~\ref{prop:lerf}, there is
$\Phi\in\Aut(\widehat{F*P})$ with $\Phi(\overline K)=\overline L$. Since
$\overline F$ is, by Lemma~\ref{lem:smallest-factor}, the smallest free
profinite factor containing $\overline K$, we have
\[
  \Phi(\overline F)=\overline{\acl_{F*P}(L)}.
\]
Thus $\rk\acl_{F*P}(L)=\rk F$. If
$F*P=\acl_{F*P}(L)*P'$, then $\rk P'=\rk P$; hence there is
$\beta\in\Aut(F*P)$ with $\beta(\acl_{F*P}(L))=F$. Replacing
$(L,\alpha)$ by $(\beta(L),\beta\circ\alpha)$ preserves measure
equivalence by Lemmas~\ref{lem:aut} and~\ref{lem:groupoid}; composing the
final witnessing automorphism with $\beta^{-1}$ recovers the original pair.
Thus we may assume $L\le F$. Lemma~\ref{lem:measure-stable} then identifies the comparison of
$K$ and $L$ in $F*P$ with the same comparison in $F$. Rigidity of $K$ in $F$
therefore gives $\varphi\in\Aut(F)$ with $\varphi(K)=L$ and, in the marked
case, $\varphi|_K=\alpha$. Extending $\varphi$ by the identity on $P$ proves
rigidity of $K$ in $F*P$.

\medskip\noindent\emph{From $F*P$ to $F$.}
Let $L\le F$ be finitely generated and choose a measure equivalence
$\alpha\colon K\to L$ computed in $F$. By Lemma~\ref{lem:measure-stable}, it is
also a measure equivalence in $F*P$. Rigidity there gives
$\varphi\in\Aut(F*P)$ with $\varphi(K)=L$ and, in the marked case,
$\varphi|_K=\alpha$. Automorphisms preserve algebraic closure, so
\[
  \varphi(F)=\varphi(\acl_{F*P}(K))
  =\acl_{F*P}(L)=\acl_F(L)\le F,
\]
where the equality $\acl_{F*P}(L)=\acl_F(L)$ is Lemma~\ref{lem:acl-trans}. Since $\varphi(F)$ is
a free factor of $F$ with the same finite rank as $F$, it follows that
$\varphi(F)=F$. Hence $\varphi|_F\in\Aut(F)$ witnesses rigidity of $K$ in $F$
(and restricts to $\alpha$ in the marked case).

The initial reduction through $A=\acl_F(K)$ also shows that either rigidity
notion for $K\le F$ is equivalent to the corresponding notion in
$\acl_F(K)$.
\end{proof}

We isolate the downward half of Theorem~\ref{thm:universal}, together with two
normalisations that follow from Proposition~\ref{prop:invariants}, as a single
statement.  It is used so often below that giving it a name shortens several
proofs considerably; part (iii) in particular replaces, by one citation, a
reduction step that would otherwise be carried out twice in the proof of
Theorem~\ref{thm:free-prod}.

\begin{proposition}[Reduction to the algebraic closure]\label{prop:reduce}
Let $F$ be a finitely generated free group.
\begin{enumerate}
\item[(i)] Let $H\le F$ be nontrivial and finitely generated and put
  $A=\acl_F(H)$.  Then $H$ is profinitely rigid in $F$ if and only if it is
  profinitely rigid in $A$, and likewise for setwise profinite rigidity.
\item[(ii)] In testing Definition~\ref{def:rigid} or Definition~\ref{def:setwise}
  for such an $H$, one may always assume that the comparison subgroup $L$
  satisfies $L\le A$ and $\acl_F(L)=A$.
\item[(iii)] Let $F=A_1*\dots*A_k$ be an internal free product decomposition, let
  $H_i\le A_i$ be finitely generated, and put $H=H_1*\dots*H_k$ and
  $A_i'=\acl_F(H_i)$.  Then $F':=A_1'*\dots*A_k'$ is a free factor of $F$
  containing $H$, one has $\acl_F(H)=F'$, and each of the statements
  ``$H_i$ is (setwise) profinitely rigid'' and ``$H$ is (setwise) profinitely
  rigid'' holds in $F$ if and only if it holds in $F'$.  Consequently, in proving
  such a statement one may assume $A_i=\acl_F(H_i)$ for every $i$.
\end{enumerate}
\end{proposition}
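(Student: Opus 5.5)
Each of the three parts should follow from Theorem~\ref{thm:universal}, Lemma~\ref{lem:acl-trans} and the invariants in Proposition~\ref{prop:invariants}; no new profinite input is needed.

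For (i), write $F=A*C$ with $A=\acl_F(H)$. Theorem~\ref{thm:universal}, applied with ambient free group $A$ and free extension $P=C$, says that $H$ is (setwise) profinitely rigid in $A$ if and only if it is so in $A*C=F$. When $C$ is trivial there is nothing to prove.

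For (ii), let $L\le F$ be finitely generated and measure equivalent to $H$ via $\alpha$.
\begin{itemize}
\item By Proposition~\ref{prop:invariants}(iv), there is $\beta\in\Aut(F)$ with $\beta(\acl_F(L))=A$.
\item Automorphisms commute with algebraic closure, since they permute free factors and preserve inclusions. Hence $\acl_F(\beta(L))=A$, and in particular $\beta(L)\le A$.
\item By Lemmas~\ref{lem:aut} and~\ref{lem:groupoid}, $\beta\circ\alpha\colon H\to\beta(L)$ is again a measure equivalence.
\item If $\varphi$ witnesses rigidity for the pair $(\beta(L),\beta\circ\alpha)$, then $\beta^{-1}\circ\varphi$ witnesses it for $(L,\alpha)$. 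In the marked case it restricts to $\alpha$ on $H$; in the setwise case it carries $H$ to $L$.
\end{itemize}
So the test may always be restricted to such normalised $L$.

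For (iii), the equality $\acl_F(H)=F'$ and the fact that $F'$ is a free factor are exactly Lemma~\ref{lem:acl-freeprod} and its proof. Applying (i) to $H$ gives rigidity of $H$ in $F$ if and only if in $F'$. For $H_i$, the statements are as follows.
\begin{itemize}
\item We have $A_i'=\acl_F(H_i)$, and $A_i'$ is a free factor of $F'$, so Lemma~\ref{lem:acl-trans} gives $\acl_{F'}(H_i)=A_i'$.
\item Applying (i) twice, once in $F$ and once in $F'$, shows that rigidity of $H_i$ in $F$, rigidity in $A_i'$, and rigidity in $F'$ are all equivalent.
\end{itemize}
Replacing $F$ by $F'$ and $A_i$ by $A_i'$ gives a decomposition $F'=A_1'*\dots*A_k'$ with $H_i\le A_i'=\acl_{F'}(H_i)$, which is the asserted normalisation.

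The trivial case $H_i=1$ needs separate treatment, because (i) assumes nontriviality. Here $A_i'=1$, the factor simply drops out of the decomposition, and the trivial subgroup is rigid in every ambient group, as observed in the proof of Theorem~\ref{thm:universal}.

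The only delicate points are the bookkeeping in (ii), namely that the conjugating automorphism $\beta$ transports both the subgroup and the marking correctly, and this handling of trivial factors in (iii). I expect the former to be the main place where care is needed.
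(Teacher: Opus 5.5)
Your proposal is correct and follows the paper's proof essentially step for step. Part (i) is Theorem~\ref{thm:universal} applied to $A\le A*C$. Part (ii) is the same normalisation by the automorphism $\beta$ from Proposition~\ref{prop:invariants}(iv), using Lemmas~\ref{lem:aut} and~\ref{lem:groupoid}. Part (iii) combines Lemmas~\ref{lem:acl-freeprod} and~\ref{lem:acl-trans} with two applications of (i). You also treat trivial factors $H_i=1$ explicitly, which the paper leaves implicit; the paper in turn adds, via Lemma~\ref{lem:measure-stable}, that in (ii) the measures may then be computed in $A$ rather than $F$.
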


\begin{proof}
(i)  $A$ is a free factor, say $F=A*C$, so this is Theorem~\ref{thm:universal}
applied with the roles of $F$ and $F*P$ played by $A$ and $A*C$.

(ii)  Let $L\le F$ be measure equivalent to $H$ via $\alpha$.  By
Proposition~\ref{prop:invariants}(iv) there is $\beta\in\Aut(F)$ with
$\beta(\acl_F(L))=A$.  Replacing $L$ by $\beta(L)$ and $\alpha$ by
$\beta\circ\alpha$ changes neither the hypothesis nor the conclusion of either
definition---for the hypothesis this is Lemmas~\ref{lem:aut} and
\ref{lem:groupoid}, and for the conclusion it is because $\Aut(F)$ acts---and
now $L\le\acl_F(L)=A$ with $\acl_F(L)=A$.  Finally, by
Lemma~\ref{lem:measure-stable} the distributions induced by $H$ and $L$ are the
same whether computed in $F$ or in $A$.

(iii)  By Lemma~\ref{lem:acl-trans}, $A_i'=\acl_{A_i}(H_i)$ is a free factor of
$A_i$; write $A_i=A_i'*C_i$.  Then
\[
  F=A_1'*C_1*\dots*A_k'*C_k=(A_1'*\dots*A_k')*(C_1*\dots*C_k),
\]
so $F'=A_1'*\dots*A_k'$ is a free factor of $F$, and it contains
$H=H_1*\dots*H_k$ since $H_i\le A_i'$.  That $\acl_F(H)=F'$ is
Lemma~\ref{lem:acl-freeprod} together with Lemma~\ref{lem:acl-trans}.  Since
$A_i'$ is a free factor of $F'$ and $F'$ is a free factor of $F$, part~(i)
applied to $H_i$ (whose algebraic closure in $F$ and in $F'$ is $A_i'$ in both
cases, by Lemma~\ref{lem:acl-trans}) shows that $H_i$ is (setwise) profinitely
rigid in $F$ if and only if it is (setwise) profinitely rigid in $F'$; and the
same argument applied to $H$, whose algebraic closure is $F'$, gives the
corresponding statement for $H$.  Replacing $F$ by $F'$ and $A_i$ by $A_i'$ is
therefore harmless, and after that replacement $A_i=\acl_F(H_i)$.
\end{proof}

Theorem~\ref{thm:universal} moves the ambient group in two directions: upwards
along free extensions $F\rightsquigarrow F*P$, and downwards to $\acl_F(K)$. 

\section{Setwise profinite rigidity}\label{sec:setwise-rigid}

Setwise rigidity has a source of examples that does not directly address the
marked problem: if the closure of a finitely generated subgroup is fixed by
$\Aut(\Fhat)$, then its measure-equivalence class collapses to a single
subgroup. We first develop this observation for finite-index verbal subgroups
and then classify the measure classes of all subgroups containing
$[F_n,F_n]$.

\subsection{Finite-index verbal subgroups}\label{sec:verbal}

We begin with subgroups whose closures are invariant under every automorphism of
$\widehat{F_n}$. For such subgroups, setwise rigidity strengthens to uniqueness
inside the measure-equivalence class.

\begin{definition}\label{def:profchar}
A finite-index subgroup $H\le F_n$ is \emph{profinitely characteristic} if
$\Theta(\overline H)=\overline H$ for every continuous automorphism
$\Theta\in\Aut(\widehat{F_n})$, where $\overline H$ denotes the closure of $H$ in
$\widehat{F_n}$.
\end{definition}

For nontrivial finitely generated subgroups, the finite-index condition is in
fact forced by profinite invariance. We use the following standard fact.

\begin{lemma}\label{lem:normal-fi}
Let $n\ge1$ and let $H\trianglelefteq F_n$ be a non-trivial finitely generated
normal subgroup.  Then $[F_n:H]<\infty$.
\end{lemma}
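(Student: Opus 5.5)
We will argue by contradiction via covering theory. Suppose $[F_n:H]=\infty$. Since $H$ is finitely generated, by M.~Hall's theorem (equivalently, by Stallings folding) $H$ is a free factor of some finite-index subgroup $F'\le F_n$; write $F'=H*C$. Because $[F_n:H]=\infty$ and $[F_n:F']<\infty$, we get $[F':H]=\infty$, which forces $C\neq1$. Indeed, if $C=1$ then $H=F'$ has finite index.

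Next we use normality. The subgroup $N:=H$ is normal in $F_n$, hence normal in $F'$. But a nontrivial proper free factor is never normal when the complement is nontrivial. Pick $1\ne h\in H$ and $1\ne c\in C$. By the normal form theorem for free products, the element $chc^{-1}$ is reduced of length three, alternating $C,H,C$, so it does not lie in $H$. This contradicts normality of $H$ in $F'$. Hence $[F_n:H]<\infty$.

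The only step needing care is the existence of $F'$ with $H$ as a free factor. This is the standard strengthening of Hall's theorem: complete the finite Stallings core graph of $H$ to a finite covering of the rose. The fundamental group $F'$ of the cover is then a finite-index subgroup in which $H$ is a free factor, because the core graph is a subgraph and a maximal tree of the core extends to one of the cover.

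If the author prefers to avoid Stallings graphs, an alternative route is the Schreier index formula. Normality gives $H^{g}=H$ for all $g$. One would then use Greenberg/Karrass--Solitar: a nontrivial finitely generated subgroup containing a nontrivial normal subgroup has finite index. The free-factor argument above is more self-contained, so I would present that.
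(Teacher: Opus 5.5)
Your proof is correct, but it takes a different route from the paper's. The paper works directly in the covering $Y\to X$ with $\pi_1(Y)=H$. The basepoint-free core of $Y$ is finite and nonempty, and every deck transformation preserves it. Because $H$ is normal, the deck group $F_n/H$ acts freely on the vertices of $Y$, hence freely on the finitely many vertices of the core. This gives $[F_n:H]\le|V(\text{core})|$ in one step, with no contradiction and no free-product combinatorics.

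You instead use the free-factor form of M.~Hall's theorem. That needs extra work: completing the Stallings graph to a finite cover and extending a maximal tree. You then apply the normal form theorem in $F'=H*C$ to show that a free factor with nontrivial complement is not normal (it is in fact malnormal). This input is heavier than the paper needs. In exchange, it isolates a clean structural reason that fits the paper's free-factor theme. Read directly rather than by contradiction, your argument shows $C=1$, i.e.\ $H=F'$, so you also get an explicit index: the degree of the finite cover you built.

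Like the paper's argument, yours avoids the Schreier index formula. The fallback you mention adds nothing. The paper notes that the Schreier formula presupposes finite index, so that route would be circular here. The Karrass--Solitar/Greenberg theorem is a generalisation of the lemma itself, so invoking it would be a citation rather than a proof. You are right to present the free-factor argument.
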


\begin{proof}
Realise $F_n$ as $\pi_1$ of a wedge $X$ of $n$ circles and let $Y\to X$ be the
covering with $\pi_1(Y)=H$.  Let $C\subseteq Y$ be the core of $Y$, that is, the
union of the images of all reduced circuits in $Y$; it is non-empty because
$H\neq1$, and finite because $H$ is finitely generated.  The core is defined
without reference to a basepoint, so it is preserved by every automorphism of the
graph $Y$, in particular by every deck transformation.  As $H$ is normal the
covering is regular, so the deck group $F_n/H$ acts freely on the vertices of
$Y$, hence freely on the finitely many vertices of $C$.  Therefore
$|F_n/H|\le|V(C)|<\infty$.
\end{proof}

The covering-space proof avoids any appeal to the Nielsen--Schreier index
formula, whose finite-index hypothesis would be circular at this point.

\begin{proposition}\label{prop:fi-automatic}
Let $H\le F_n$ be non-trivial and finitely generated, and suppose
$\Theta(\overline H)=\overline H$ for every $\Theta\in\Aut(\widehat{F_n})$.  Then
$H$ is characteristic in $F_n$ and $[F_n:H]<\infty$; that is, $H$ is a profinitely
characteristic subgroup in the sense of Definition~\ref{def:profchar}, the
finite-index clause being automatic.
\end{proposition}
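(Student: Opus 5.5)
The plan is to show that $H$ is characteristic, and in particular normal, in $F_n$, and then apply Lemma~\ref{lem:normal-fi}.

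\emph{Step 1: $H$ is invariant under $\Aut(F_n)$.} Let $\varphi\in\Aut(F_n)$. It extends continuously to $\widehat\varphi\in\Aut(\widehat{F_n})$, and by continuity $\widehat\varphi(\overline H)=\overline{\varphi(H)}$. The hypothesis gives $\overline{\varphi(H)}=\overline H$. Both $H$ and $\varphi(H)$ are finitely generated, so by M.~Hall's theorem (free groups are LERF) they are closed in the profinite topology of $F_n$. For a finitely generated subgroup $K$ of a LERF group we have $\overline K\cap F_n=K$, because separability of $K$ means exactly that every $g\notin K$ lies outside $\overline K$. Therefore
\[
  \varphi(H)=\overline{\varphi(H)}\cap F_n=\overline H\cap F_n=H,
\]
and $H$ is characteristic in $F_n$.

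\emph{Step 2: finite index.} Inner automorphisms are automorphisms, so $H$ is normal. It is non-trivial and finitely generated, so Lemma~\ref{lem:normal-fi} gives $[F_n:H]<\infty$. The invariance clause in Definition~\ref{def:profchar} is the hypothesis itself, so $H$ is profinitely characteristic.

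The only step needing care is the identity $\overline K\cap F_n=K$, which is where LERF enters. I would write out that a point $g\in F_n\setminus K$ is excluded from $\overline K$ by the open set $gN$ with $gN\cap KN=\emptyset$. Nothing else seems to be an obstacle. A shortcut via inner automorphisms of $\widehat{F_n}$ would only give normality of $\overline H$, so I will go through the discrete automorphisms as above.
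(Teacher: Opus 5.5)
Your proposal is correct and follows essentially the same proof as the paper. You extend $\varphi$ to $\widehat\varphi$, use $\widehat\varphi(\overline H)=\overline{\varphi(H)}$ together with the hypothesis, and intersect with $F_n$ via M.~Hall's theorem to get $\varphi(H)=H$, then apply Lemma~\ref{lem:normal-fi}. One small point: continuity by itself only gives $\widehat\varphi(\overline H)\subseteq\overline{\varphi(H)}$, and equality needs that $\widehat\varphi$ is a homeomorphism or that $\overline H$ is compact, which the paper handles by applying the same inclusion to $\varphi^{-1}$.
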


\begin{proof}
Let $\varphi\in\Aut(F_n)$ and let $\widehat\varphi\in\Aut(\widehat{F_n})$ be its
continuous extension.  Then $\widehat\varphi(\overline H)$ is closed and contains
$\varphi(H)$, so $\overline{\varphi(H)}\le\widehat\varphi(\overline H)$; applying
the same to $\varphi^{-1}$ and $\varphi(H)$ gives the reverse inclusion, whence
$\widehat\varphi(\overline H)=\overline{\varphi(H)}$.  By hypothesis
$\overline{\varphi(H)}=\overline H$.  Both $H$ and $\varphi(H)$ are finitely
generated, hence closed in the profinite topology by M.~Hall's theorem
\cite{Hall}, so
\[
  \varphi(H)=\overline{\varphi(H)}\cap F_n=\overline H\cap F_n=H .
\]
Thus $H$ is characteristic, in particular normal, and
Lemma~\ref{lem:normal-fi} gives $[F_n:H]<\infty$.
\end{proof}

\begin{remark}\label{rmk:fi-automatic}
The finite-index condition in Definition~\ref{def:profchar} is automatic once
$H$ is nontrivial and finitely generated. Both hypotheses are necessary:
$\overline{[F_n,F_n]}$ is $\Aut(\widehat{F_n})$-invariant while
$[F_n,F_n]$ has infinite index for $n\ge2$, and the trivial subgroup is also
invariant.
\end{remark}

Profinite characteristicity is stronger than ordinary characteristicity and is
exactly the invariance needed below. Proposition~\ref{prop:verbal-invariant}
shows, in particular, that every finite-index verbal subgroup is profinitely
characteristic.

\begin{example}\label{ex:verbal}
The kernel of the mod-$m$ abelianisation,
\[
  F_n^{\,m}[F_n,F_n]=\ker\bigl(F_n\twoheadrightarrow(\mathbb Z/m\mathbb Z)^{n}\bigr),
\]
is verbal, with defining words $x^{m}$ and $[x,y]$, and has index $m^{n}$; for
$m=p$ prime it is the kernel of the mod-$p$ abelianisation.  For odd $p$ the
subgroup $F_n^{\,p}\gamma_3(F_n)$, whose quotient is the free class-two exponent-$p$
group, is a non-abelian nilpotent example of index $p^{\,n+\binom n2}$.

More generally, for any finite group $G$ the subgroup
\[
  K_{F_n}(G)=\bigcap_{\varphi\in\Epi(F_n,G)}\ker\varphi
  =\bigcap\{N\fii F_n : F_n/N\cong G\}
\]
of Section~\ref{sec:tuples} is profinitely characteristic.  The argument is
carried out in $\widehat{F_n}$, as Definition~\ref{def:profchar} requires.  Since
$F_n$ is finitely generated and $G$ is finite, $\Epi(F_n,G)$ is finite, so the
intersection above is a \emph{finite} intersection of finite-index normal
subgroups; this matters, because taking closures commutes with finite
intersections of finite-index subgroups but not with arbitrary ones.  Under
the bijection $N\mapsto\overline N$ between the finite-index normal subgroups of
$F_n$ and the open normal subgroups of $\widehat{F_n}$ one has
$\widehat{F_n}/\overline N\cong F_n/N$; hence
\[
  \overline{K_{F_n}(G)}=\bigcap\{U\trianglelefteq_{o}\widehat{F_n}
    :\widehat{F_n}/U\cong G\}.
\]
A continuous automorphism of $\widehat{F_n}$ permutes the open normal subgroups
with quotient isomorphic to $G$, hence fixes this intersection.  The finite-index
subgroup $F_n(d)=\bigcap_{[F_n:K]\le d}K$, the intersection of all subgroups of
index at most $d$, is profinitely characteristic for the same reason.  It is the
passage to the intersection that is essential: for $n\ge2$ the
group $F_n$ has $2^{n}-1$ subgroups of index two, which $\Aut(F_n)$ permutes, so
no single one is characteristic, yet their intersection
$F_n^{\,2}[F_n,F_n]=K_{F_n}(\mathbb Z/2\mathbb Z)$ is.
\end{example}

\begin{theorem}\label{thm:measure-detected}
Let $H\le F_n$ be a profinitely characteristic finite-index subgroup, for
instance, any finite-index verbal subgroup.  If $L\le F_n$ is finitely generated
and measure equivalent to $H$, then $L=H$.  In particular $H$ is setwise
profinitely rigid in $F_n$.
\end{theorem}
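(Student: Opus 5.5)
The plan is to pass to the profinite completion via Proposition~\ref{prop:lerf} and use invariance of $\overline H$ together with M.~Hall's theorem. Let $L\le F_n$ be finitely generated and $\alpha\colon H\to L$ a measure equivalence. Since $F_n$ is finitely generated and LERF, Proposition~\ref{prop:lerf} gives $\Theta\in\Aut(\widehat{F_n})$ extending $\widehat\alpha\colon\overline H\to\overline L$. In particular $\Theta(\overline H)=\overline L$. Because $H$ is profinitely characteristic, $\Theta(\overline H)=\overline H$, and therefore $\overline L=\overline H$.

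Next I would return to the discrete group. Both $H$ and $L$ are finitely generated, so M.~Hall's theorem \cite{Hall} makes them closed in the profinite topology of $F_n$. Hence $H=\overline H\cap F_n$ and $L=\overline L\cap F_n$, exactly as in the proof of Proposition~\ref{prop:fi-automatic}. It follows that $L=H$. Alternatively one could note that $\overline H$ is open because $H$ has finite index, but the closedness argument handles this without any index considerations.

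For the setwise rigidity conclusion: every finitely generated $L$ measure equivalent to $H$ equals $H$, and so it lies trivially in $\Aut(F_n)\cdot H$. The parenthetical claim about verbal subgroups follows from Proposition~\ref{prop:verbal-invariant}. That result shows $\overline{V(F_n)}$ is $\Aut(\widehat{F_n})$-invariant, so a finite-index verbal subgroup is profinitely characteristic in the sense of Definition~\ref{def:profchar}.

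There is essentially no obstacle here. The only point needing care is the identification $\overline H\cap F_n=H$, which relies on separability of finitely generated subgroups. The remaining steps are direct citations of earlier results.
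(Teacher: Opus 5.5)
Your proposal is correct and follows the paper's proof essentially verbatim: Proposition~\ref{prop:lerf} supplies $\Theta\in\Aut(\widehat{F_n})$ with $\Theta(\overline H)=\overline L$, profinite characteristicity forces $\overline L=\overline H$, and M.~Hall's theorem then gives $L=\overline L\cap F_n=\overline H\cap F_n=H$. Your remarks on the setwise conclusion and on verbal subgroups via Proposition~\ref{prop:verbal-invariant} also match the paper.
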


\begin{proof}
Let $\alpha\colon H\to L$ be a measure equivalence.  By
Proposition~\ref{prop:lerf} there is a continuous automorphism
$\widetilde\alpha\in\Aut(\widehat{F_n})$ with
$\widetilde\alpha(\overline H)=\overline L$; this existence is the substantive
input, resting on the Garrido--Jaikin-Zapirain criterion of
Lemma~\ref{lem:gj}.  Because $H$ is profinitely characteristic,
$\widetilde\alpha(\overline H)=\overline H$, whence
$\overline L=\widetilde\alpha(\overline H)=\overline H$.  Both $H$ and $L$ are
finitely generated, hence closed in the profinite topology by M.~Hall's theorem
\cite{Hall}; thus $H=\overline H\cap F_n$ and $L=\overline L\cap F_n$, and
therefore
\[
  L=\overline L\cap F_n=\overline H\cap F_n=H. \qedhere
\]
\end{proof}

Thus a profinitely characteristic subgroup is \emph{measure-detected}: every
finitely generated subgroup measure equivalent to it is equal to it. This is
strictly stronger than setwise rigidity.

\subsection{Subgroups containing the commutator subgroup}\label{sec:commutator}

Profinite characteristicity is restrictive: for $n\ge2$, none of the
$2^n-1$ subgroups of index two in $F_n$ is characteristic. Nevertheless, all
subgroups containing $[F_n,F_n]$ are setwise profinitely rigid. Their measure
classes admit a complete description in terms of Smith normal form.

\begin{theorem}\label{thm:above-commutator}
Let $H\le F_n$ be a nontrivial finitely generated subgroup with
$[F_n,F_n]\le H$.  If $L\le F_n$ is finitely generated and measure equivalent to $H$, then
there is $\varphi\in\Aut(F_n)$ with $\varphi(H)=L$.  Thus $H$ is setwise
profinitely rigid in $F_n$.  Moreover the $\Aut(F_n)$-orbit of $H$, and hence its
measure class, is determined exactly by the invariant factors of
$\pi(H)$ in $\mathbb Z^{n}$.
\end{theorem}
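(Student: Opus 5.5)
Since $[F_n,F_n]\le H$, the subgroup $H$ is the full preimage $\pi^{-1}(M)$ of $M:=\pi(H)\le\mathbb Z^n$. The first point to settle is that $M$ must have finite index. If $\operatorname{rank} M<n$, then $H/[F_n,F_n]\cong M$ has infinite index in $\mathbb Z^n$, so $H$ has infinite index in $F_n$. But $H$ is also nontrivial, finitely generated and normal in $F_n$ (it contains the commutator subgroup), so Lemma~\ref{lem:normal-fi} forces finite index, a contradiction. Hence $M$ has full rank $n$, and $H$ is a finite-index normal subgroup with abelian quotient $F_n/H\cong\mathbb Z^n/M\cong\bigoplus_i\mathbb Z/d_i$.

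Next I show that any $L$ measure equivalent to $H$ also contains $[F_n,F_n]$. By Proposition~\ref{prop:lerf} there is $\Theta\in\Aut(\widehat{F_n})$ with $\Theta(\overline H)=\overline L$. The subgroup $\overline{[F_n,F_n]}$ is $\Theta$-invariant by Proposition~\ref{prop:verbal-invariant}, and it is contained in $\overline H$. Applying $\Theta$ gives $\overline{[F_n,F_n]}\le\overline L$, and therefore
\[
[F_n,F_n]\le\overline L\cap F_n=L,
\]
where the last equality is M.~Hall's theorem, since $L$ is finitely generated. So $L=\pi^{-1}(N)$ with $N=\pi(L)$. By Lemma~\ref{lem:abelian-invariant}, $M$ and $N$ have the same invariant factors, so there is $g\in GL_n(\mathbb Z)$ with $gM=N$.

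To realise $g$ on $F_n$, I use the classical surjectivity of $\Aut(F_n)\to GL_n(\mathbb Z)$, which holds because elementary transvections and signed permutations lift to Nielsen automorphisms. Choose $\varphi\in\Aut(F_n)$ with $\varphi^{\mathrm{ab}}=g$. Since $\pi\circ\varphi=g\circ\pi$, we get
\[
\varphi(H)=\varphi(\pi^{-1}(M))=\pi^{-1}(gM)=\pi^{-1}(N)=L.
\]
This proves setwise rigidity.

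For the classification statement, two such subgroups $H=\pi^{-1}(M)$ and $H'=\pi^{-1}(M')$ lie in the same $\Aut(F_n)$-orbit if and only if $M$ and $M'$ are $GL_n(\mathbb Z)$-equivalent. One direction holds because automorphisms of $F_n$ preserve $[F_n,F_n]$ and act on $\mathbb Z^n$ through $GL_n(\mathbb Z)$; the other is the lifting argument above. By Smith normal form, $GL_n(\mathbb Z)$-equivalence is the same as having equal invariant factors. Finally, Lemma~\ref{lem:aut} shows the $\Aut(F_n)$-orbit lies inside the measure class, and the argument above shows the reverse inclusion, so the measure class equals the orbit.

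The one step needing care is the containment $[F_n,F_n]\le L$. It relies on $\Theta$ preserving the closed derived subgroup and on $L$ being closed in the profinite topology. Both are available from earlier results, so I expect no real obstacle.
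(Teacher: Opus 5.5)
Your proposal is correct and follows the paper's proof essentially step for step. The steps are: finite index via Lemma~\ref{lem:normal-fi}; the containment $[F_n,F_n]\le L$ via a $\Theta$ from Proposition~\ref{prop:lerf} that preserves the closed derived subgroup, together with M.~Hall's theorem; then Lemma~\ref{lem:abelian-invariant} and Nielsen's lifting of $GL_n(\mathbb Z)$ to $\Aut(F_n)$. Your version of the containment step, $[F_n,F_n]\le\overline L\cap F_n=L$, is slightly more economical than the paper's, since it does not need $[F_n,F_n]$ itself to be closed in the profinite topology.
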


\begin{proof}
First note that the stated equivalence holds.  A subgroup containing $[F_n,F_n]$
is normal, since $F_n/[F_n,F_n]$ is abelian, and the quotient $F_n/H$ is then
abelian; that $H$ has finite index is Lemma~\ref{lem:normal-fi}.  Conversely a
subgroup with $F_n/H$ finite
abelian contains $[F_n,F_n]$, is finitely generated by Nielsen--Schreier, and is
nontrivial because $F_n$ is infinite.  (For $n\ge2$ the word ``nontrivial'' is
automatic, as $[F_n,F_n]\neq1$; it is needed only to exclude $H=1$ in $F_1$.)

Let $\alpha\colon H\to L$ be a measure equivalence and let
$\Theta\in\Aut(\widehat{F_n})$ be as in Proposition~\ref{prop:lerf}, so
$\Theta(\overline H)=\overline L$.  By Lemma~\ref{lem:verbal-closure} applied to
$V=\{[x,y]\}$, the closure $\overline{[F_n,F_n]}$ is the closed derived subgroup
of $\widehat{F_n}$, which is $\Theta$-invariant by
Proposition~\ref{prop:verbal-invariant}.  Since
$\overline H\supseteq\overline{[F_n,F_n]}$ we get
$\overline L=\Theta(\overline H)\supseteq\overline{[F_n,F_n]}$.  The subgroup
$[F_n,F_n]$ is closed in the profinite topology of $F_n$, because
$F_n/[F_n,F_n]\cong\mathbb Z^{n}$ is residually finite; and $L$ is closed by
M.~Hall's theorem.  Hence
\[
  L=\overline L\cap F_n\supseteq\overline{[F_n,F_n]}\cap F_n=[F_n,F_n].
\]
Consequently, $H=\pi^{-1}(M)$ and $L=\pi^{-1}(N)$, where $M=\pi(H)$ and
$N=\pi(L)$.

By Lemma~\ref{lem:abelian-invariant}, the subgroups $M,N\le\mathbb Z^{n}$ have the
same invariant factors, hence there is $\gamma\in GL_n(\mathbb Z)$ with
$\gamma(M)=N$.  The natural map $\Aut(F_n)\to GL_n(\mathbb Z)$ is surjective
(Nielsen), so we may choose $\varphi\in\Aut(F_n)$ with
$\varphi^{\mathrm{ab}}=\gamma$.  Any automorphism of $F_n$ preserves
$[F_n,F_n]$, so
\[
  \varphi(H)=\varphi\bigl(\pi^{-1}(M)\bigr)=\pi^{-1}\bigl(\gamma(M)\bigr)
  =\pi^{-1}(N)=L .
\]

For the last assertion, subgroups of $\mathbb Z^{n}$ with the same invariant
factors lie in one $GL_n(\mathbb Z)$-orbit and conversely, and taking
$\pi$-preimages is an $\Aut(F_n)$-equivariant bijection between the subgroups of
$\mathbb Z^{n}$ and the subgroups of $F_n$ containing $[F_n,F_n]$.
\end{proof}

\begin{corollary}\label{cor:count}
For $n\ge1$ the map
\[
  H\longmapsto(d_1\mid d_2\mid\dots\mid d_n)
\]
sending a finite-index subgroup $H\le F_n$ containing $[F_n,F_n]$ to the
invariant factors of $\pi(H)$---here $\pi(H)$ has finite index in
$\mathbb Z^{n}$, so its rank is $n$ and the list has exactly $n$ positive
terms---is a complete invariant of the measure class of
$H$: two such subgroups are measure equivalent if and only if they have the same
invariant factors, if and only if they lie in the same $\Aut(F_n)$-orbit.  In
particular, there are exactly as many measure classes of index-$m$ subgroups of
$F_n$ containing $[F_n,F_n]$ as there are abelian groups of order $m$ generated
by at most $n$ elements.
\end{corollary}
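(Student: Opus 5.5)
The corollary is essentially a repackaging of Theorem~\ref{thm:above-commutator} together with an elementary count, so the plan is to assemble three equivalences and then count orbits.

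\emph{Well-definedness of the invariant.} Let $H\le F_n$ have finite index and contain $[F_n,F_n]$. Then $H=\pi^{-1}(M)$ with $M=\pi(H)$ and $[\mathbb Z^n:M]=[F_n:H]<\infty$. Hence $M$ has rank $n$ and Smith invariants $d_1\mid\dots\mid d_n$, all positive, with $\prod d_i=[F_n:H]$.

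\emph{The chain of equivalences.} Take two such subgroups $H,L$.
\begin{itemize}
\item If they are measure equivalent, Lemma~\ref{lem:abelian-invariant} (equivalently Proposition~\ref{prop:invariants}(v)) shows $\pi(H)$ and $\pi(L)$ have the same invariant factors.
\item If the invariant factors agree, Smith normal form gives $\gamma\in GL_n(\mathbb Z)$ with $\gamma(\pi(H))=\pi(L)$. Nielsen's surjection $\Aut(F_n)\to GL_n(\mathbb Z)$ lifts $\gamma$ to some $\varphi$, and then $\varphi(H)=\varphi(\pi^{-1}\pi(H))=\pi^{-1}(\gamma\pi(H))=L$, exactly as in the last step of the proof of Theorem~\ref{thm:above-commutator}.
\item If they lie in one $\Aut(F_n)$-orbit, Lemma~\ref{lem:aut} shows they are measure equivalent.
\end{itemize}
This closes the cycle. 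The equivalence also follows directly from the last sentence of Theorem~\ref{thm:above-commutator}.

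\emph{The count.} I would use the $\Aut(F_n)$-equivariant bijection $H\leftrightarrow\pi(H)$ between subgroups containing $[F_n,F_n]$ and subgroups of $\mathbb Z^n$. Under it, measure classes of index-$m$ subgroups correspond to $GL_n(\mathbb Z)$-orbits of index-$m$ subgroups $M\le\mathbb Z^n$. These orbits are in bijection with invariant-factor lists $d_1\mid\dots\mid d_n$ of positive integers with $\prod d_i=m$. That each list is realised is shown by $M=\bigoplus d_i\mathbb Z e_i$.

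The main point needing care is this last bijection. The map $M\mapsto\mathbb Z^n/M\cong\bigoplus\mathbb Z/d_i$ sends such lists bijectively to isomorphism classes of abelian groups of order $m$ generated by at most $n$ elements. Injectivity comes from uniqueness of invariant factors once we strip the leading $1$'s. Surjectivity comes from padding the invariant-factor list of any such group with $1$'s on the left up to length $n$, which is possible because the group needs at most $n$ generators. Nothing deeper is needed.
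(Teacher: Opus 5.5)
Your proof is correct and takes the paper's route. The paper cites Theorem~\ref{thm:above-commutator} for one direction and Lemma~\ref{lem:aut} for the other; you unfold that theorem's proof (Lemma~\ref{lem:abelian-invariant}, then Smith normal form, then Nielsen lifting), which is legitimate because both subgroups are assumed to contain $[F_n,F_n]$, and your count via $\mathbb Z^n/\pi(H)\cong\bigoplus_i\mathbb Z/d_i$, with the padding-by-$1$'s bookkeeping made explicit, matches the paper's one-line count.
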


\begin{proof}
One direction is Theorem~\ref{thm:above-commutator}; the other holds because
subgroups in a common $\Aut(F_n)$-orbit induce the same measures, by
Lemma~\ref{lem:aut}.  The count follows since
$F_n/H\cong\mathbb Z^{n}/\pi(H)\cong\bigoplus_i\mathbb Z/d_i$.
\end{proof}

\begin{remark}
Theorem~\ref{thm:above-commutator} is not a consequence of
Theorem~\ref{thm:measure-detected}.  For example, for $n\ge2$ the subgroup
$H=\pi^{-1}(2\mathbb Z\oplus\mathbb Z^{n-1})$ has index two in $F_n$ and is not
characteristic, so it is not profinitely characteristic and
Theorem~\ref{thm:measure-detected} says nothing about it;
Theorem~\ref{thm:above-commutator} shows that its measure class is its
$\Aut(F_n)$-orbit, which consists of the $2^{n}-1$ subgroups of index two.
Conversely Theorem~\ref{thm:measure-detected} gives a strictly stronger
conclusion for the subgroups it covers, namely $L=H$ on the nose.  The two
results agree, as they must, on $H=\pi^{-1}(m\mathbb Z^{n})=F_n^{\,m}[F_n,F_n]$,
whose $\Aut(F_n)$-orbit is a single subgroup.
\end{remark}

Within this lattice one can also characterise exactly the measure-detected
subgroups: they are precisely the mod-$m$ congruence subgroups.

\begin{theorem}\label{thm:detected-char}
Let $n\ge1$ and let $H\le F_n$ be a subgroup of finite index with
$[F_n,F_n]\le H$.  The following are equivalent.
\begin{enumerate}
\item[(i)]   $H$ is measure-detected: every finitely generated $L\le F_n$
             measure equivalent to $H$ satisfies $L=H$.
\item[(ii)]  $H$ is profinitely characteristic.
\item[(iii)] $H$ is characteristic in $F_n$.
\item[(iv)]  $\pi(H)\le\mathbb Z^{n}$ is invariant under $GL_n(\mathbb Z)$.
\item[(v)]   $H=F_n^{\,m}[F_n,F_n]$ for some integer $m\ge1$; equivalently, the
             invariant factors of $\pi(H)$ satisfy $d_1=\dots=d_n$.
\end{enumerate}
\end{theorem}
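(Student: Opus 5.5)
I will prove the cycle (v)$\Rightarrow$(ii)$\Rightarrow$(i)$\Rightarrow$(iii)$\Rightarrow$(iv)$\Rightarrow$(v). Most steps are direct citations. The only genuinely new input is a small linear-algebra lemma in the last step.

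\emph{(v)$\Rightarrow$(ii)$\Rightarrow$(i).} By Example~\ref{ex:verbal}, $F_n^{\,m}[F_n,F_n]$ is a finite-index verbal subgroup. Proposition~\ref{prop:verbal-invariant} then shows its closure is $\Aut(\widehat{F_n})$-invariant, which is (ii). Theorem~\ref{thm:measure-detected} turns (ii) into (i).

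\emph{(i)$\Rightarrow$(iii).} Let $\varphi\in\Aut(F_n)$. Lemma~\ref{lem:aut} shows $\varphi(H)$ is measure equivalent to $H$ via $\varphi|_H$. Since $\varphi(H)$ is finitely generated, (i) forces $\varphi(H)=H$.

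\emph{(iii)$\Rightarrow$(iv).} Since $H\supseteq[F_n,F_n]$, we have $H=\pi^{-1}(\pi(H))$ and $\pi(\varphi(H))=\varphi^{\mathrm{ab}}(\pi(H))$. The map $\Aut(F_n)\to GL_n(\mathbb Z)$ is surjective (Nielsen), so every $\gamma\in GL_n(\mathbb Z)$ has the form $\varphi^{\mathrm{ab}}$. Characteristicity therefore gives $\gamma(\pi(H))=\pi(H)$ for all $\gamma$.

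\emph{(iv)$\Rightarrow$(v).} This is the main, though elementary, step. Put $M=\pi(H)$, which has finite index in $\mathbb Z^n$. Let $m$ be the smallest positive integer with $me_1\in M$; finite index guarantees such an $m$ exists.
\begin{itemize}
\item[] \emph{$m\mathbb Z^n\subseteq M$.} Every standard basis vector $e_i$ is the image of $e_1$ under a permutation matrix, so $me_i\in M$ for all $i$.
\item[] \emph{$M\subseteq m\mathbb Z^n$.} Take $v=(a_1,\dots,a_n)\in M$ and suppose some $a_i$ is not divisible by $m$. Applying a permutation, assume $i=1$. The elementary matrix $E=I+E_{21}$ (add coordinate $1$ to coordinate $2$) lies in $GL_n(\mathbb Z)$, so $Ev-v=a_1e_2\in M$. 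Permuting back gives $a_1e_1\in M$. Together with $me_1\in M$, this yields $\gcd(a_1,m)e_1\in M$, and $\gcd(a_1,m)<m$, contradicting minimality of $m$.
\end{itemize}
This needs $n\ge2$ for the elementary matrix. For $n=1$, every subgroup of $\mathbb Z$ is already of the form $m\mathbb Z$, so (v) holds trivially.

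Hence $M=m\mathbb Z^n$, that is, $H=\pi^{-1}(m\mathbb Z^n)=F_n^{\,m}[F_n,F_n]$. The reformulation $d_1=\dots=d_n$ is immediate from Smith normal form: $M$ has all invariant factors equal to $d$ exactly when $M=d\mathbb Z^n$, since $d\mathbb Z^n$ is preserved by every change of basis.

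The only point needing care is the case $n=1$ in (iv)$\Rightarrow$(v), which is covered above. As a consistency check, Theorem~\ref{thm:above-commutator} shows that the measure class of $H$ is its $\Aut(F_n)$-orbit, so (i) is equivalent to $H$ being $\Aut(F_n)$-fixed; this gives (i)$\Leftrightarrow$(iii) directly as well.
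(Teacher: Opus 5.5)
Your proof is correct and uses the same cycle of implications as the paper, with identical citations for (v)$\Rightarrow$(ii)$\Rightarrow$(i), (i)$\Rightarrow$(iii) and (iii)$\Rightarrow$(iv). The only difference is in (iv)$\Rightarrow$(v): the paper picks a Smith-adapted basis $M=\bigoplus_i d_i\mathbb Z f_i$ and applies the single permutation swapping $f_1$ and $f_n$ to get $d_n\mid d_1$, whereas your minimal-$m$ argument uses coordinate permutations and one transvection in the standard basis, needing Smith normal form only for the reformulation $d_1=\dots=d_n$; both arguments are valid.
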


\begin{proof}
We prove
$(\mathrm i)\Rightarrow(\mathrm{iii})\Rightarrow(\mathrm{iv})\Rightarrow(\mathrm v)
\Rightarrow(\mathrm{ii})\Rightarrow(\mathrm i)$.  Throughout we use that
$[F_n,F_n]\le H$ forces $H=\pi^{-1}(\pi(H))$.

$(\mathrm i)\Rightarrow(\mathrm{iii})$.  Let $\varphi\in\Aut(F_n)$ and put
$L=\varphi(H)$.  By Lemma~\ref{lem:aut}, $\varphi|_H\colon H\to L$ is a measure
equivalence; and $L$ is finitely generated by Nielsen--Schreier.  By (i), $L=H$.

$(\mathrm{iii})\Rightarrow(\mathrm{iv})$.  Let $\gamma\in GL_n(\mathbb Z)$.  The
natural map $\Aut(F_n)\to GL_n(\mathbb Z)$ is surjective (Nielsen), so
$\gamma=\varphi^{\mathrm{ab}}$ for some $\varphi\in\Aut(F_n)$, and then
$\gamma(\pi(H))=\pi(\varphi(H))=\pi(H)$.

$(\mathrm{iv})\Rightarrow(\mathrm v)$.  Put $M=\pi(H)$, of finite index in
$\mathbb Z^{n}$, with invariant factors $d_1\mid\dots\mid d_n$, all positive.
Choose a basis $f_1,\dots,f_n$ of $\mathbb Z^{n}$ with
$M=\bigoplus_{i}d_i\mathbb Z f_i$.  For $n=1$, there is nothing to prove.  For
$n\ge2$, let $\gamma\in GL_n(\mathbb Z)$ be the permutation matrix interchanging
$f_1$ and $f_n$.  Then $d_1f_n=\gamma(d_1f_1)\in\gamma(M)=M$, while
$M\cap\mathbb Z f_n=d_n\mathbb Z f_n$; hence $d_n\mid d_1$.  Together with
$d_1\mid d_n$ this gives $d_1=d_n$, so all the $d_i$ are equal to a common value
$m\ge1$ and $M=m\mathbb Z^{n}$.  Therefore
$H=\pi^{-1}(m\mathbb Z^{n})=F_n^{\,m}[F_n,F_n]$.

$(\mathrm v)\Rightarrow(\mathrm{ii})$.  $F_n^{\,m}[F_n,F_n]=V(F_n)$ is the verbal
subgroup for $V=\{x^{m},[x,y]\}$ and has index $m^{n}$, so
Proposition~\ref{prop:verbal-invariant} applies.

$(\mathrm{ii})\Rightarrow(\mathrm i)$ is Theorem~\ref{thm:measure-detected}.
\end{proof}

\begin{corollary}\label{cor:classsize}
Let $H\le F_n$ be of finite index with $[F_n,F_n]\le H$, and let
$d_1\mid\dots\mid d_n$ be the invariant factors of $\pi(H)$.  Then the measure
class of $H$ is in bijection with the set of subgroups of $\mathbb Z^{n}$ having
invariant factors $d_1\mid\dots\mid d_n$.  It is a single subgroup if and only if
$d_1=\dots=d_n$.  For $[F_n:H]=p$ prime it has exactly
\[
  \frac{p^{n}-1}{p-1}
\]
members.
\end{corollary}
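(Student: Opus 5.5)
The statement has three parts, and I would derive them in order from Theorem~\ref{thm:above-commutator}, Theorem~\ref{thm:detected-char} and elementary counting in $\mathbb Z^n$.

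\emph{The bijection.} By Theorem~\ref{thm:above-commutator}, every finitely generated $L$ measure equivalent to $H$ contains $[F_n,F_n]$. The proof of that theorem shows this via the closure argument, and then $L=\pi^{-1}(\pi(L))$. Lemma~\ref{lem:abelian-invariant} gives that $\pi(L)$ has the same invariant factors $d_1\mid\dots\mid d_n$ as $\pi(H)$. Conversely, let $N\le\mathbb Z^n$ have these invariant factors. Then $N$ is $GL_n(\mathbb Z)$-equivalent to $\pi(H)$, and lifting through the surjection $\Aut(F_n)\to GL_n(\mathbb Z)$ yields $\varphi$ with $\varphi(H)=\pi^{-1}(N)$. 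By Lemma~\ref{lem:aut}, $\pi^{-1}(N)$ is measure equivalent to $H$; it is finitely generated because it has finite index. Hence $L\mapsto\pi(L)$ and $N\mapsto\pi^{-1}(N)$ are mutually inverse bijections between the measure class of $H$ and the set of subgroups of $\mathbb Z^n$ with invariant factors $d_1\mid\dots\mid d_n$.

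\emph{The singleton criterion.} The class is a single subgroup exactly when $H$ is measure-detected. By Theorem~\ref{thm:detected-char}, (i)$\Leftrightarrow$(v), this happens if and only if $d_1=\dots=d_n$.

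\emph{The prime-index count.} If $[F_n:H]=p$, then $\prod d_i=p$, so the invariant factors are $(1,\dots,1,p)$. The subgroups of $\mathbb Z^n$ with these invariant factors are exactly the index-$p$ subgroups. Each of these contains $p\mathbb Z^n$, since the quotient has exponent $p$. They therefore correspond bijectively to the hyperplanes of $\mathbb F_p^n$, equivalently to the nonzero functionals up to scalar. That number is $(p^n-1)/(p-1)$.

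None of these steps is a genuine obstacle. The one point needing care is the first direction of the bijection: every measure-equivalent $L$ must lie above $[F_n,F_n]$, so that $\pi$ is injective on the class. This is exactly what the proof of Theorem~\ref{thm:above-commutator} established.
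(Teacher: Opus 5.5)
Your proposal is correct and takes essentially the same route as the paper. The bijection comes from Theorem~\ref{thm:above-commutator} combined with the $\pi$-preimage correspondence and Smith normal form. The singleton criterion is Theorem~\ref{thm:detected-char}, (i)$\Leftrightarrow$(v), and the prime-index count identifies index-$p$ subgroups of $\mathbb Z^n$ with kernels of surjections to $\mathbb Z/p\mathbb Z$ taken up to scalars. The only difference is cosmetic: you unpack the orbit description directly through Lemmas~\ref{lem:abelian-invariant} and~\ref{lem:aut} rather than citing it.
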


\begin{proof}
By Theorem~\ref{thm:above-commutator} the measure class of $H$ is its
$\Aut(F_n)$-orbit, and $\pi$ induces an $\Aut(F_n)$-equivariant bijection between
the subgroups of $F_n$ containing $[F_n,F_n]$ and the subgroups of
$\mathbb Z^{n}$, under which the orbit of $H$ corresponds to the
$GL_n(\mathbb Z)$-orbit of $\pi(H)$; by Smith normal form the latter is the set
of subgroups with the given invariant factors.  The singleton criterion is
Theorem~\ref{thm:detected-char}, $(\mathrm i)\Leftrightarrow(\mathrm v)$.  For
$[F_n:H]=p$ prime the invariant factors are $(1,\dots,1,p)$ and the subgroups in
question are the index-$p$ subgroups of $\mathbb Z^{n}$, that is, the kernels of
the surjections $\mathbb Z^{n}\to\mathbb Z/p\mathbb Z$; two such surjections have
the same kernel exactly when they differ by a scalar in
$(\mathbb Z/p\mathbb Z)^{\times}$, giving
$(p^{n}-1)/(p-1)$ subgroups.
\end{proof}

For $p=2$, this recovers the count $2^n-1$.

\begin{remark}
The same argument applies more generally to a finite-index verbal subgroup
$V(F_n)$ provided every automorphism of $F_n/V(F_n)$ lifts to $F_n$. Under this
hypothesis, every subgroup containing $V(F_n)$ is setwise profinitely rigid.
Indeed, Lemma~\ref{lem:verbal-closure} and
Proposition~\ref{prop:verbal-invariant} make the closed subgroup
$\overline{V(F_n)}=V(\widehat{F_n})$ characteristic in $\widehat{F_n}$, while
finite index gives $V(F_n)=\overline{V(F_n)}\cap F_n$. The lifting hypothesis is
the additional ingredient needed to pass from the quotient back to an ambient
automorphism of $F_n$.
\end{remark}

\section{Marked profinite rigidity and free products}\label{sec:freeproducts}

We now prove the free-product theorem for marked rigidity. The argument has two
parts. First, a measure equivalence on
$H=H_1*\cdots*H_k$ restricts to measure equivalences on the factors $H_i$.
Rigidity of each $H_i$ then produces factorwise ambient automorphisms. Second,
we show that the corresponding images of the ambient factors still generate
$F$, so the factorwise maps glue to a single automorphism.

Throughout the section, fix an internal decomposition
\[
  F=A_1*\cdots*A_k,
  \qquad H_i\le A_i\ \text{finitely generated},
  \qquad H=H_1*\cdots*H_k.
\]

\subsection{A gluing criterion}\label{sec:gluing}

\begin{definition}[Setup]\label{def:setup}
Let $L\le F$ be a finitely generated subgroup and let $\alpha\colon H\to L$ be a
measure equivalence.  For each $i$, set
\[
  L_i:=\alpha(H_i),\qquad \alpha_i:=\alpha|_{H_i}\colon H_i\to L_i .
\]
Since $\alpha$ is an isomorphism and $H=H_1*\dots*H_k$, we obtain an internal
free product decomposition $L=L_1*\dots*L_k$.  Note that the subgroups $L_i$ need
not lie in distinct free factors of $F$.
\end{definition}

The key elementary input is a product formula. For probability measures
$\nu_i$ on finite sets $X_i$, write $\bigotimes_i\nu_i$ for their product
measure on $\prod_iX_i$.

\begin{lemma}\label{lem:product-measure}
Let $F=A_1*\dots*A_k$ be an internal free product decomposition of a finitely
generated free group and let $P_i\le A_i$ be finitely generated subgroups.  Put
$P=P_1*\dots*P_k$.  Then for every finite group $G$, under the canonical
identification $\Hom(P,G)\cong\prod_{i=1}^{k}\Hom(P_i,G)$,
\[
  \mu_{P,G}=\bigotimes_{i=1}^{k}\mu_{P_i,G},
\]
where each $\mu_{P_i,G}$ may equivalently be computed in $A_i$ or in $F$.
\end{lemma}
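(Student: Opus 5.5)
The plan is to compute $\mu_{P,G}$ directly from the uniform measure on $\Hom(F,G)$, using the universal property of the free product twice: once for the ambient decomposition $F=A_1*\dots*A_k$ and once for the internal decomposition $P=P_1*\dots*P_k$.

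First, restriction to the factors gives a bijection
\[
  \Hom(F,G)\longrightarrow\prod_{i=1}^k\Hom(A_i,G),\qquad f\longmapsto(f|_{A_1},\dots,f|_{A_k}).
\]
Under this bijection the uniform measure on the left corresponds to the product of the uniform measures on the $\Hom(A_i,G)$, since the source and target are finite sets of the same size. Second, because $P=P_1*\dots*P_k$ is an internal free product, restriction gives the canonical identification $\Hom(P,G)\cong\prod_i\Hom(P_i,G)$, $h\mapsto(h|_{P_i})_i$. Since $P_i\le A_i$, for any $f\in\Hom(F,G)$ the $i$-th coordinate of $f|_P$ is $f|_{P_i}=(f|_{A_i})|_{P_i}$. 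This depends only on the $i$-th coordinate $f|_{A_i}$.

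The restriction map $\Hom(F,G)\to\Hom(P,G)$ is therefore identified with the product of the maps $r_i\colon\Hom(A_i,G)\to\Hom(P_i,G)$. The pushforward of a product measure along a product of maps is the product of the pushforwards. For $h=(h_1,\dots,h_k)$ this gives
\[
  \Pr\nolimits^{F}_{P,G}(h)=\prod_{i=1}^k\frac{|\{g\in\Hom(A_i,G):g|_{P_i}=h_i\}|}{|\Hom(A_i,G)|}=\prod_{i=1}^k\Pr\nolimits^{A_i}_{P_i,G}(h_i).
\]
This is exactly the identity $\mu_{P,G}=\bigotimes_i\mu^{A_i}_{P_i,G}$.

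Finally, writing $F=A_i*(\ast_{j\ne i}A_j)$ and applying Lemma~\ref{lem:measure-stable} gives $\mu^{A_i}_{P_i,G}=\mu^{F}_{P_i,G}$. This justifies the clause that each factor measure may be computed in $A_i$ or in $F$.

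There is no serious obstacle. The one point needing care is checking that the two identifications are compatible: the coordinates of $f|_P$ in $\prod_i\Hom(P_i,G)$ are obtained by restricting the coordinates of $f$ in $\prod_i\Hom(A_i,G)$, and this uses $P_i\le A_i$. Without this hypothesis the restrictions would not factor coordinatewise and independence would fail.
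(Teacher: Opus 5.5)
Your proposal is correct and follows essentially the same route as the paper. The paper also identifies $\Hom(F,G)$ with $\prod_i\Hom(A_i,G)$ and uses $P_i\le A_i$ to see that restriction factors coordinatewise, which gives the product formula. It then applies Lemma~\ref{lem:measure-stable} to $F=A_i*(\ast_{j\neq i}A_j)$ to show that each $\mu_{P_i,G}$ is the same whether computed in $A_i$ or in $F$.
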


\begin{proof}
Fix a finite group $G$.  The universal property of the free product gives
$\Hom(F,G)\cong\prod_{i=1}^{k}\Hom(A_i,G)$, under which the uniform measure on
$\Hom(F,G)$ is the product of the uniform measures on the factors.  As
$P_i\le A_i$, the restriction $\varphi\mapsto\varphi|_{P_i}$ depends only on the
$A_i$-coordinate; hence the coordinate restrictions are independent and the joint
law of $(\varphi|_{P_1},\dots,\varphi|_{P_k})$ is the product of the laws of the
$\varphi|_{P_i}$.  That $\mu_{P_i,G}$ is the same computed in $A_i$ or in $F$ is
Lemma~\ref{lem:measure-stable}, applied to the decomposition
$F=A_i*(\ast_{j\neq i}A_j)$.
\end{proof}

\begin{lemma}\label{lem:factorwise}
For each $i$, the isomorphism $\alpha_i\colon H_i\to L_i$ is a measure
equivalence: for every finite group $G$ and every $h_i\in\Hom(L_i,G)$,
\[
  \Pr\nolimits_{L_i,G}(h_i)=\Pr\nolimits_{H_i,G}(h_i\circ\alpha_i).
\]
\end{lemma}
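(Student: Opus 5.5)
The plan is a marginalisation argument: obtain the measure on $\Hom(H_i,G)$ as a pushforward of the measure on $\Hom(H,G)$ along restriction, once for $H$ and once for $L$, and compare the two using the measure equivalence $\alpha$.

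Fix a finite group $G$ and $h_i\in\Hom(L_i,G)$. Since $H_i\le H\le F$, restricting a uniform $\varphi\in\Hom(F,G)$ to $H_i$ can be done in two stages, first to $H$ and then to $H_i$. Hence $\mu_{H_i,G}$ is the pushforward of $\mu_{H,G}$ under the restriction map $r_i^H\colon\Hom(H,G)\to\Hom(H_i,G)$. The same reasoning with $L_i\le L\le F$ shows that $\mu_{L_i,G}$ is the pushforward of $\mu_{L,G}$ under $r_i^L\colon\Hom(L,G)\to\Hom(L_i,G)$. No freeness of $L$ relative to the ambient decomposition is needed for this; it is simply transitivity of restriction. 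This matters, because the $L_i$ need not lie in distinct free factors, so Lemma~\ref{lem:product-measure} is unavailable on the $L$ side.

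Next I would write
\[
  \Pr\nolimits_{L_i,G}(h_i)=\sum_{h\in\Hom(L,G),\ h|_{L_i}=h_i}\Pr\nolimits_{L,G}(h)
  =\sum_{h|_{L_i}=h_i}\Pr\nolimits_{H,G}(h\circ\alpha),
\]
where the second equality is \eqref{eq:meq}. The map $h\mapsto h\circ\alpha$ is a bijection $\Hom(L,G)\to\Hom(H,G)$. It carries the set $\{h:h|_{L_i}=h_i\}$ onto $\{g\in\Hom(H,G):g|_{H_i}=h_i\circ\alpha_i\}$, because $\alpha(H_i)=L_i$ gives $(h\circ\alpha)|_{H_i}=h|_{L_i}\circ\alpha_i$. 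The last sum is therefore $\Pr_{H_i,G}(h_i\circ\alpha_i)$, by the pushforward description on the $H$ side.

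The only point needing care is the bijection of fibres, in particular that every $g$ with $g|_{H_i}=h_i\circ\alpha_i$ arises in this way. This holds because $g\circ\alpha^{-1}$ restricts to $h_i$ on $L_i$. I do not expect a genuine obstacle. The lemma is thus a general fact: a measure equivalence restricts to a measure equivalence on any subgroup and its image. The free-product hypotheses are not used here; they enter only in the gluing step that follows.
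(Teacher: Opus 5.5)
Your proof is correct. It differs from the paper's in one respect: you never use the free-product structure.

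The paper first applies Lemma~\ref{lem:product-measure} on the $H$ side to write $\mu_{H,G}=\bigotimes_i\mu_{H_i,G}$. It then transports this through $\alpha$, so that $\mu_{L,G}$ becomes a product measure on $\prod_i\Hom(L_i,G)$ whose $i$-th factor is $h_i\mapsto\Pr_{H_i,G}(h_i\circ\alpha_i)$. Finally it identifies the $i$-th marginal of $\mu_{L,G}$ with $\mu_{L_i,G}$ by transitivity of restriction.

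You use transitivity of restriction on both sides, together with the fibre bijection $h\mapsto h\circ\alpha$. That route proves a more general statement: for any measure equivalence $\alpha\colon H\to L$ and any subgroup $H'\le H$, the restriction $\alpha|_{H'}\colon H'\to\alpha(H')$ is again a measure equivalence. No free-product hypothesis is needed.

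The product formula therefore buys nothing for this lemma. It is genuinely needed only in the converse half of Theorem~\ref{thm:free-prod}, where a measure equivalence on one factor must be promoted to one on the whole free product.

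One small correction to your commentary. The paper does not need Lemma~\ref{lem:product-measure} on the $L$ side either. It obtains the product form of $\mu_{L,G}$ by transport through $\alpha$, not from the ambient decomposition of $F$. Your observation that the $L_i$ need not lie in distinct free factors is true, but it is not an obstacle the paper's argument actually faces.
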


\begin{proof}
Fix a finite group $G$.  By Lemma~\ref{lem:product-measure} applied to
$P_i=H_i$,
\[
  \mu_{H,G}=\bigotimes_{i=1}^{k}\mu_{H_i,G}.
\]
Using $\Hom(L,G)\cong\prod_i\Hom(L_i,G)$, the hypothesis
$\Pr_{L,G}(h)=\Pr_{H,G}(h\circ\alpha)$ becomes
\[
  \Pr\nolimits_{L,G}(h)=\prod_{i=1}^{k}
  \Pr\nolimits_{H_i,G}\bigl(h|_{L_i}\circ\alpha_i\bigr).
\]
Thus $\mu_{L,G}$ is a product measure whose $i$-th marginal is
$h_i\mapsto\Pr_{H_i,G}(h_i\circ\alpha_i)$.  On the other hand, the $i$-th
marginal of $\mu_{L,G}$ is, by definition, the distribution induced by $L_i$,
since restricting a homomorphism to $L$ and then to $L_i$ is the same as
restricting it to $L_i$.  Comparing marginals gives
$\Pr_{L_i,G}(h_i)=\Pr_{H_i,G}(h_i\circ\alpha_i)$.
\end{proof}

\begin{corollary}\label{cor:betai}
Suppose that $H_i$ is profinitely rigid in $F$. Then there exists
$\beta_i\in\Aut(F)$ such that
\[
  \beta_i(H_i)=L_i,\qquad \beta_i|_{H_i}=\alpha_i.
\]
\end{corollary}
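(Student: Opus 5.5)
This follows directly from Lemma~\ref{lem:factorwise} and Definition~\ref{def:rigid}. By Definition~\ref{def:setup}, $L_i=\alpha(H_i)$ is a subgroup of $L\le F$. It is finitely generated because $\alpha_i\colon H_i\to L_i$ is an isomorphism and $H_i$ is finitely generated. Lemma~\ref{lem:factorwise} shows that $\alpha_i$ is a measure equivalence. That lemma computes the measures of $H_i$ and $L_i$ relative to the ambient group $F$; for $H_i$ this uses Lemma~\ref{lem:product-measure}, which says $\mu_{H_i,G}$ is the same whether computed in $A_i$ or in $F$. So $\alpha_i$ is a measure equivalence in the sense of \eqref{eq:meq} with respect to $F$.

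Since $H_i$ is profinitely rigid in $F$, applying Definition~\ref{def:rigid} to the pair $(L_i,\alpha_i)$ gives $\beta_i\in\Aut(F)$ with $\beta_i(H_i)=L_i$ and $\beta_i|_{H_i}=\alpha_i$. This is exactly the claim.

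The only point needing care is the ambient group. Definition~\ref{def:rigid} requires the measure equivalence to be computed in $F$, and one must check that this is the ambient group implicit in Lemma~\ref{lem:factorwise}. It is: the hypothesis on $\alpha$ is relative to $F$, and each marginal of $\mu^F_{L,G}$ is $\mu^F_{L_i,G}$. I expect no further obstacle.
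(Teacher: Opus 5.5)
Your proposal is correct and is the same argument the paper gives: Lemma~\ref{lem:factorwise} makes $\alpha_i$ a measure equivalence relative to $F$, and Definition~\ref{def:rigid} applied to the finitely generated subgroup $L_i$ then yields $\beta_i$. Your two extra checks, that $L_i$ is finitely generated and that the measures are computed in $F$, are exactly the details the paper leaves implicit.
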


\begin{proof}
By Lemma~\ref{lem:factorwise}, $\alpha_i$ is a measure equivalence.  Since
$H_i$ is profinitely rigid in $F$, Definition~\ref{def:rigid} provides such a
$\beta_i$.
\end{proof}

To pass from the factorwise automorphisms to one automorphism of $F$, we use the
following gluing criterion.

\begin{lemma}[Gluing]\label{lem:gluing}
Let $F=A_1*\dots*A_k$ and let $\beta_i\in\Aut(F)$ for each $i$.  Put
$B_i=\beta_i(A_i)$.  If $\langle B_1,\dots,B_k\rangle=F$, then there is a unique
automorphism $\varphi\in\Aut(F)$ with $\varphi|_{A_i}=\beta_i|_{A_i}$ for every
$i$.  In particular $\varphi|_{H_i}=\beta_i|_{H_i}$ for every subgroup
$H_i\le A_i$.
\end{lemma}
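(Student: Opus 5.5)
By the universal property of the free product $F=A_1*\dots*A_k$, the family of homomorphisms $\beta_i|_{A_i}\colon A_i\to F$ determines a unique homomorphism $\varphi\colon F\to F$ with $\varphi|_{A_i}=\beta_i|_{A_i}$ for every $i$. Uniqueness of $\varphi$ is then immediate, since the $A_i$ generate $F$, and the final clause $\varphi|_{H_i}=\beta_i|_{H_i}$ follows by restricting further to $H_i\le A_i$. It therefore remains to show that $\varphi$ is an automorphism.

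For surjectivity, the image of $\varphi$ contains $\varphi(A_i)=\beta_i(A_i)=B_i$ for every $i$. Hence it contains $\langle B_1,\dots,B_k\rangle$, which is $F$ by hypothesis.

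For injectivity I would use that finitely generated free groups are Hopfian: a surjective endomorphism of $F$ is an automorphism. Concretely, $F$ is residually finite and finitely generated, so Mal'cev's theorem applies. Alternatively, and more directly, choose a basis $X_i$ of each $A_i$, so that $X=\bigsqcup_i X_i$ is a basis of $F$. Then $\varphi(X)$ is a generating set of $F$ with $|X|=\rk F$ elements, and a generating set of a free group of rank $n$ with $n$ elements is a basis. Either way $\varphi$ is bijective.

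No step here is a real obstacle. The only point to be careful about is injectivity, which needs finite generation of $F$ (Hopficity). This holds because $F$ is a finitely generated free group.
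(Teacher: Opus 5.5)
Your proposal is correct and follows the paper's proof: the universal property of the free product gives the endomorphism $\varphi$, its image contains every $B_i$ and so is all of $F$, and Hopficity of finitely generated free groups upgrades surjectivity to bijectivity. Your alternative remark, that a generating set of $\rk F$ elements in a free group is a basis, is just the same Hopfian fact stated another way.
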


\begin{proof}
By the universal property of the free product, the homomorphisms
$A_i\xrightarrow{\beta_i}B_i\hookrightarrow F$ determine a unique endomorphism
$\varphi\colon F\to F$ with $\varphi|_{A_i}=\beta_i|_{A_i}$.  Its image is
$\varphi(F)=\langle B_1,\dots,B_k\rangle=F$, so $\varphi$ is surjective.
Finitely generated free groups are Hopfian, so $\varphi$ is an automorphism.  The
final assertion is immediate from $H_i\le A_i$.
\end{proof}

\subsection{The free-product theorem}\label{sec:general-thm}

The gluing condition of Lemma~\ref{lem:gluing} is in fact automatic.  The key
point is that the profinite automorphism supplied by measure equivalence carries
the smallest free profinite factor containing each $H_i$ to the corresponding
factor for its image.  We first make the reduction of
Proposition~\ref{prop:reduce}(iii), and hence assume throughout that
\begin{equation}\label{eq:normalised}
  F=A_1*\dots*A_k,\qquad A_i=\acl_F(H_i)\quad(1\le i\le k).
\end{equation}

\begin{proof}[Proof of each $H_i$ rigid $\Rightarrow$ $H$ rigid]
Let
\[
  I:=\{i\in\{1,\dots,k\}:H_i\neq1\},
  \qquad
  F_0:=\mathop{\ast}_{i\in I}A_i,
  \qquad
  D:=\mathop{\ast}_{i\notin I}A_i.
\]
Then
\[
  F=F_0*D,
  \qquad
  H=\mathop{\ast}_{i\in I}H_i\le F_0.
\]
If $I=\varnothing$, then $H=1$, which is profinitely rigid in every
ambient group, so there is nothing to prove. Assume therefore that
$I\neq\varnothing$.

Since $F_0$ is a free factor of $F$, Theorem~\ref{thm:universal} shows,
for every $i\in I$, that $H_i$ is profinitely rigid in $F_0$ if and only
if it is profinitely rigid in $F$. The same theorem shows that $H$ is
profinitely rigid in $F_0$ if and only if it is profinitely rigid in
$F$. We may therefore replace $F$ by $F_0$ and delete the factors with
$H_i=1$. Thus, from now on, we assume that $H_i\neq1$ for every $i$.

Normalise as in \eqref{eq:normalised}.

\medskip\noindent\emph{Step 1: factorwise correction.}
Let $L\le F$ be finitely generated and let $\alpha\colon H\to L$ be a measure
equivalence.  Set $L_i=\alpha(H_i)$ and $\alpha_i=\alpha|_{H_i}$ as in
Definition~\ref{def:setup}.  By Lemma~\ref{lem:factorwise}, each
$\alpha_i\colon H_i\to L_i$ is a measure equivalence, and by
Corollary~\ref{cor:betai}, there is $\beta_i\in\Aut(F)$ with
\[
  \beta_i(H_i)=L_i,\qquad \beta_i|_{H_i}=\alpha_i .
\]
Since automorphisms of $F$ preserve algebraic closures,
\begin{equation}\label{eq:star}
  \beta_i(A_i)=\beta_i\bigl(\acl_F(H_i)\bigr)
  =\acl_F\bigl(\beta_i(H_i)\bigr)=\acl_F(L_i)=:B_i .
\end{equation}
In particular each $B_i$ is a free factor of $F$, independent of the choice of
$\beta_i$.

\medskip\noindent\emph{Step 2: the corrected factors generate.}
By Proposition~\ref{prop:lerf} there is $\Theta\in\Aut(\Fhat)$ with
$\Theta|_{\overline H}=\widehat\alpha$.  As $\widehat\alpha$ is a homeomorphism
$\overline H\to\overline L$ carrying $\overline{H_i}$ onto $\overline{L_i}$, we
get $\Theta(\overline{H_i})=\overline{L_i}$ for every $i$.  Now $H_i\neq1$, so
Lemma~\ref{lem:smallest-factor} identifies
$\overline{A_i}=\overline{\acl_F(H_i)}$ as the smallest free profinite factor of
$\Fhat$ containing $\overline{H_i}$, and likewise $\overline{B_i}$ as the
smallest free profinite factor containing $\overline{L_i}$.  A continuous
automorphism preserves the class of free profinite factors and preserves
inclusion, hence carries smallest such factors to smallest such factors:
\[
  \Theta\bigl(\overline{A_i}\bigr)=\overline{B_i}\qquad(1\le i\le k).
\]
By Lemma~\ref{lem:free-prof-prod},
$\Fhat=\overline{A_1}\amalg\dots\amalg\overline{A_k}$; applying the automorphism
$\Theta$ to this decomposition gives
\[
  \Fhat=\Theta\bigl(\overline{A_1}\bigr)\amalg\dots\amalg
        \Theta\bigl(\overline{A_k}\bigr)
       =\overline{B_1}\amalg\dots\amalg\overline{B_k}.
\]
In particular $\overline{B_1},\dots,\overline{B_k}$ topologically generate
$\Fhat$.  Put $M=\langle B_1,\dots,B_k\rangle\le F$.  Each $B_i$ is a free factor
of $F$, hence finitely generated, so $M$ is finitely generated; and the closure of
$M$ in $\Fhat$ is a closed subgroup containing every $\overline{B_i}$, hence
equals $\Fhat$.  Thus $M$ is dense.  Because $F$ is LERF and $M$ is finitely
generated, $M$ is closed in the profinite topology of $F$; a proper closed
subgroup cannot be dense, so
\[
  M=\langle B_1,\dots,B_k\rangle=F .
\]

\medskip\noindent\emph{Step 3: gluing.}
By \eqref{eq:star} and Step 2 the automorphisms $\beta_1,\dots,\beta_k$ satisfy
the hypothesis of Lemma~\ref{lem:gluing}, which produces $\varphi\in\Aut(F)$ with
$\varphi|_{A_i}=\beta_i|_{A_i}$ for every $i$.  Consequently
$\varphi|_{H_i}=\beta_i|_{H_i}=\alpha_i$ and $\varphi(H_i)=L_i$ for every $i$.
Since $H=H_1*\dots*H_k$ and $L=L_1*\dots*L_k$, the universal property of the free
product gives $\varphi|_H=\alpha$ and $\varphi(H)=L$.  Hence $H$ is profinitely
rigid in $F$.

The final assertion follows from Theorem~\ref{thm:universal}.
\end{proof}

For the converse, we perturb one factor while leaving the others fixed.  The
product formula then promotes the perturbation to a measure equivalence of the
whole free product.

\begin{proof}[Proof of $H$ rigid $\Rightarrow$ each $H_i$ rigid]
Fix $j$.  If $H_j=1$ there is nothing to prove, since the only subgroup
isomorphic to $H_j$ is trivial and the identity witnesses rigidity; so assume
$H_j\neq1$.  Normalise as in \eqref{eq:normalised}; by
Proposition~\ref{prop:reduce}(i) it then suffices to prove that $H_j$ is
profinitely rigid in $A_j=\acl_F(H_j)$.

\medskip\noindent\emph{Perturbing one factor.}  Let $L_j\le A_j$ be finitely
generated and let $\alpha_j\colon H_j\to L_j$ be a measure equivalence; by
Lemma~\ref{lem:measure-stable} it is immaterial whether the measures are computed
in $A_j$ or in $F$.  Set
\[
  L_i:=H_i,\quad\alpha_i:=\mathrm{id}_{H_i}\ (i\neq j),\qquad
  L:=L_1*\dots*L_k,
\]
an internal free product inside $F$ because $L_i\le A_i$ for every $i$, and let
$\alpha\colon H\to L$ be the isomorphism determined by the $\alpha_i$ through the
universal property of the free product.  By Lemma~\ref{lem:product-measure},
applied once to $(H_i)_i$ and once to $(L_i)_i$, for every finite $G$ and every
$h\in\Hom(L,G)$,
\[
  \Pr\nolimits_{L,G}(h)=\prod_{i=1}^{k}\Pr\nolimits_{L_i,G}\bigl(h|_{L_i}\bigr)
  =\prod_{i=1}^{k}\Pr\nolimits_{H_i,G}\bigl(h|_{L_i}\circ\alpha_i\bigr)
  =\Pr\nolimits_{H,G}(h\circ\alpha),
\]
the middle equality holding for $i=j$ by hypothesis and for $i\neq j$ trivially.
Thus $\alpha$ is a measure equivalence $H\to L$, and profinite
rigidity of $H$ in $F$ supplies $\varphi\in\Aut(F)$ with $\varphi(H)=L$ and
$\varphi|_H=\alpha$.  In particular
\[
  \varphi(H_j)=\alpha(H_j)=L_j,\qquad \varphi|_{H_j}=\alpha_j .
\]

\medskip\noindent\emph{Descending to $A_j$.}  Automorphisms of $F$ preserve
algebraic closures, so $\varphi(A_j)=\varphi(\acl_F(H_j))=\acl_F(L_j)$.  Since
$L_j\le A_j$ and $A_j$ is a free factor of $F$, Lemma~\ref{lem:acl-trans} gives
$\acl_F(L_j)=\acl_{A_j}(L_j)$, a free factor of $A_j$.  Writing
$A_j=\varphi(A_j)*C$ and comparing ranks, $\rk\varphi(A_j)=\rk A_j<\infty$ forces
$\rk C=0$, so $\varphi(A_j)=A_j$.  Hence $\varphi|_{A_j}\in\Aut(A_j)$ satisfies
$\varphi|_{A_j}(H_j)=L_j$ and $\varphi|_{H_j}=\alpha_j$, which is exactly
profinite rigidity of $H_j$ in $A_j$.
\end{proof}

The forward implication uses the marking only to obtain equality on the
individual factors; after that point the argument is setwise.  It therefore has
the following setwise form.

\begin{proposition}\label{prop:free-prod-setwise}
In the setting of Theorem~\ref{thm:free-prod}, if every $H_i$ is setwise
profinitely rigid in $F$ then $H$ is setwise profinitely rigid in $F$, and hence
in every free extension $F*P$.
\end{proposition}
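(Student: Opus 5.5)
We rerun the forward half of the proof of Theorem~\ref{thm:free-prod}, changing only Step~1 so that it uses setwise rather than marked information. The assertion about free extensions then follows from Theorem~\ref{thm:universal}(ii).

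\emph{Reductions.} Exactly as before, delete the factors with $H_i=1$ and pass to $F_0$. By Proposition~\ref{prop:reduce}(iii), applied in its setwise form, we may assume the normalisation \eqref{eq:normalised}. Now fix a finitely generated $L\le F$ and a measure equivalence $\alpha\colon H\to L$, and set $L_i=\alpha(H_i)$. Lemma~\ref{lem:factorwise} shows that each $\alpha_i\colon H_i\to L_i$ is a measure equivalence. Setwise rigidity of $H_i$ therefore gives $\beta_i\in\Aut(F)$ with $\beta_i(H_i)=L_i$, but with no control over $\beta_i|_{H_i}$. As in \eqref{eq:star}, we still have $\beta_i(A_i)=\acl_F(L_i)=:B_i$.

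\emph{Generation and gluing.} Step~2 of the earlier proof used only $\alpha$ itself, through the profinite automorphism $\Theta$ with $\Theta(\overline{H_i})=\overline{L_i}$, together with Lemmas~\ref{lem:smallest-factor} and~\ref{lem:free-prof-prod}. It never used the $\beta_i$. It therefore applies verbatim and gives $\langle B_1,\dots,B_k\rangle=F$. Lemma~\ref{lem:gluing} then produces $\varphi\in\Aut(F)$ with $\varphi|_{A_i}=\beta_i|_{A_i}$. Consequently $\varphi(H_i)=\beta_i(H_i)=L_i$ for every $i$, and hence
\[
  \varphi(H)=\langle\varphi(H_1),\dots,\varphi(H_k)\rangle=\langle L_1,\dots,L_k\rangle=L .
\]
So $L\in\Aut(F)\cdot H$, which is setwise rigidity.

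\emph{Where care is needed.} The only delicate point is checking that nothing in Steps~2 and~3 depended on the marking. Step~2 depends only on $\Theta$ and the $L_i$. Step~3 needs only $\varphi(H_i)=L_i$, not $\varphi|_H=\alpha$, and $\varphi(H_i)=L_i$ holds because the images of the $H_i$ generate $\varphi(H)$. The reduction steps must also be checked in setwise form, but Theorem~\ref{thm:universal}(ii) and Proposition~\ref{prop:reduce} are stated for both notions, so they apply directly.
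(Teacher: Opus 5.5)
Your proposal is correct and is essentially the paper's own proof: you drop the clause $\beta_i|_{H_i}=\alpha_i$ from Corollary~\ref{cor:betai}, observe that \eqref{eq:star} and Step~2 depend only on $\beta_i(H_i)=L_i$ and on $\alpha$ (through $\Theta$), glue via Lemma~\ref{lem:gluing}, and use Theorem~\ref{thm:universal}(ii) for free extensions. One small slip in your closing remark: $\varphi(H_i)=L_i$ comes from $\varphi|_{A_i}=\beta_i|_{A_i}$ together with $\beta_i(H_i)=L_i$, and it is $\varphi(H)=L$ that then follows because the $\varphi(H_i)$ generate $\varphi(H)$ --- which your main argument does state correctly.
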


\begin{proof}
Repeat the proof that each $H_i$ rigid $\Rightarrow$ $H$ rigid, replacing
Corollary~\ref{cor:betai} by its setwise form: Lemma~\ref{lem:factorwise} gives
that $\alpha_i$ is a measure equivalence, and setwise rigidity of $H_i$ supplies
$\beta_i\in\Aut(F)$ with $\beta_i(H_i)=L_i$---the clause
$\beta_i|_{H_i}=\alpha_i$ is simply dropped.  Equation~\eqref{eq:star} uses only
$\beta_i(H_i)=L_i$, so Steps 2 and 3 go through verbatim and produce
$\varphi\in\Aut(F)$ with $\varphi|_{A_i}=\beta_i|_{A_i}$, whence
$\varphi(H_i)=L_i$ for every $i$ and therefore $\varphi(H)=L$.  The last
assertion is Theorem~\ref{thm:universal}(ii).
\end{proof}

We do not claim a converse to Proposition~\ref{prop:free-prod-setwise}.  The
marked converse uses the identity $\varphi|_H=\alpha$ to force
$\varphi(H_j)=L_j$ for the prescribed factor.  Setwise rigidity gives only
$\varphi(H)=L$; the factors may be permuted or mixed, so the same perturbation
argument does not recover rigidity of a specified $H_j$.  Thus the marked and
setwise converse problems are genuinely different, and the proof of
Theorem~\ref{thm:free-prod} does not settle the latter.

\begin{question}\label{q:setwise-freeprod-converse}
In the setting of Theorem~\ref{thm:free-prod}, if
$H=H_1*\dots*H_k$ is setwise profinitely rigid in $F$, must each $H_i$ be
setwise profinitely rigid in $F$?
\end{question}

\subsection{Examples and applications}\label{sec:new-families}

We now combine Theorem~\ref{thm:free-prod} with known rigid words.  The inputs
we use are primitive words \cite{PP,Wilton,GJ}, the words $x^d$ and
$[x,y]^d$ \cite{HMP}, and surface words \cite{Wilton2}. 

The free-product theorem can be packaged as a general amplification
principle for rigid words.

\begin{corollary}\label{cor:assembly}
Let
\[
  F=A_1*\dots*A_k*C
\]
be an internal free product decomposition of a finitely generated free group.
For each $i$, let $1\neq w_i\in A_i$ be a profinitely rigid word in $A_i$.  Then
\[
  H:=\langle w_1,\dots,w_k\rangle
    =\langle w_1\rangle*\dots*\langle w_k\rangle
\]
is profinitely rigid in $F$, and hence in every free extension $F*P$.
Moreover,
\[
  \acl_F(H)=\acl_{A_1}(\langle w_1\rangle)*\dots*
            \acl_{A_k}(\langle w_k\rangle).
\]
In particular, if every $w_i$ is algebraic in $A_i$, then
$\acl_F(H)=A_1*\dots*A_k$; if also $C=1$, then $H$ is algebraic in $F$.
\end{corollary}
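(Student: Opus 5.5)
The plan is to reduce the corollary to Theorem~\ref{thm:free-prod} applied to the $(k+1)$-fold decomposition $F=A_1*\dots*A_k*C$, with $H_i=\langle w_i\rangle\le A_i$ for $i\le k$ and $H_{k+1}=1\le C$. Because the $A_i$ and $C$ are free factors of an internal free product, the subgroup generated by elements $w_i\in A_i$ is the internal free product $\langle w_1\rangle*\dots*\langle w_k\rangle*1$. This follows from the normal form in $F$: a reduced alternating product of nontrivial powers $w_{i_1}^{e_1}\cdots w_{i_m}^{e_m}$ with consecutive indices distinct is a reduced word in the free product $A_1*\dots*A_k*C$, hence nontrivial. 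This gives the displayed equality $H=\langle w_1\rangle*\dots*\langle w_k\rangle$.

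Next I verify the hypotheses of Theorem~\ref{thm:free-prod}. The trivial factor $H_{k+1}=1$ is rigid in $F$ trivially. For $i\le k$, Corollary~\ref{cor:word-cyclic} (applied with ambient group $A_i$, a free group of finite rank, after fixing a basis) turns rigidity of the word $w_i$ in $A_i$ into marked profinite rigidity of the cyclic subgroup $\langle w_i\rangle$ in $A_i$. Since $A_i$ is a free factor of $F$, write $F=A_i*P_i$ with $P_i=\ast_{j\ne i}A_j*C$ finitely generated free. Theorem~\ref{thm:universal}(i) then upgrades this to rigidity of $\langle w_i\rangle$ in $F$. Theorem~\ref{thm:free-prod} gives rigidity of $H$ in $F$, and its final clause, or Theorem~\ref{thm:universal} again, gives rigidity in every $F*P$. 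If $A_i$ happens to have rank one, so that $A_i\cong\mathbb Z$, the same argument still applies, because Corollary~\ref{cor:word-cyclic} is stated for arbitrary $n$.

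For the algebraic closure, I apply Lemma~\ref{lem:acl-freeprod} to the decomposition with factors $A_1,\dots,A_k,C$ and subgroups $\langle w_1\rangle,\dots,\langle w_k\rangle,1$. This gives $\acl_F(H)=\acl_F(\langle w_1\rangle)*\dots*\acl_F(\langle w_k\rangle)*\acl_F(1)$. Here $\acl_F(1)=1$, the intersection of all free factors, which includes the trivial one. Lemma~\ref{lem:acl-trans} replaces each $\acl_F(\langle w_i\rangle)$ by $\acl_{A_i}(\langle w_i\rangle)$. If each $w_i$ is algebraic in $A_i$, these closures are the $A_i$, giving $A_1*\dots*A_k$, and when $C=1$ this is $F$.

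None of these steps is a real obstacle. The only points needing care are the normal-form argument justifying the internal free product and the degenerate trivial factor $C$ in Lemma~\ref{lem:acl-freeprod}. The latter can be avoided entirely by first passing to $F_0=A_1*\dots*A_k$, which is a free factor of $F$, via Proposition~\ref{prop:reduce}.
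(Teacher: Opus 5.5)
Your proposal is correct and follows the paper's proof essentially step for step. Corollary~\ref{cor:word-cyclic} and Theorem~\ref{thm:universal} make each $\langle w_i\rangle$ rigid in $F$, Theorem~\ref{thm:free-prod} is then applied with the trivial subgroup adjoined in $C$, and the closure formula comes from Lemmas~\ref{lem:acl-trans} and~\ref{lem:acl-freeprod}. Your normal-form justification of $H=\langle w_1\rangle*\dots*\langle w_k\rangle$ and your check that $\acl_F(1)=1$ are details the paper leaves implicit.
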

\begin{proof}
By Corollary~\ref{cor:word-cyclic}, each cyclic subgroup $\langle w_i\rangle$
is profinitely rigid in $A_i$.  Theorem~\ref{thm:universal} promotes this to
profinite rigidity in $F$.  Apply Theorem~\ref{thm:free-prod} to the factors
$\langle w_i\rangle\le A_i$, adjoining the trivial subgroup in the factor $C$
when $C\neq1$.  This proves rigidity of $H$.  The formula for the algebraic
closure is Lemmas~\ref{lem:acl-trans} and~\ref{lem:acl-freeprod}; the final
claims are immediate.
\end{proof}

Any further family of profinitely rigid words can be inserted into the same
construction.  The present inputs already
produce rigid subgroups of unbounded rank.

\begin{corollary}\label{cor:examples}
Let $F$ be a finitely generated free group and $F=A_1*\dots*A_k*C$ an internal
free product decomposition in which, for each $i$, the subgroup $H_i\le A_i$ is
of one of the following types:
\begin{enumerate}
\item[(a)] $H_i=A_i$;
\item[(b)] $A_i=\langle u_i\rangle$ has rank one and
           $H_i=\langle u_i^{\,m_i}\rangle$ with
           $m_i\in\mathbb Z\setminus\{0\}$;
\item[(c)] $A_i=\langle u_i,v_i\rangle$ has rank two and
           $H_i=\bigl\langle[u_i,v_i]^{\,d_i}\bigr\rangle$ with
           $d_i\in\mathbb Z\setminus\{0\}$;
\item[(d)] $A_i=\langle u_{i,1},v_{i,1},\dots,u_{i,g},v_{i,g}\rangle$ has rank
           $2g$ and $H_i=\bigl\langle[u_{i,1},v_{i,1}]\cdots[u_{i,g},v_{i,g}]
           \bigr\rangle$;
\item[(e)] $A_i=\langle z_{i,1},\dots,z_{i,c}\rangle$ has rank $c$ and
           $H_i=\bigl\langle z_{i,1}^{2}\cdots z_{i,c}^{2}\bigr\rangle$.
\end{enumerate}
Then $H_1*\dots*H_k$ is profinitely rigid in $F$, and in every free extension
$F*P$ with $P$ finitely generated free.
\end{corollary}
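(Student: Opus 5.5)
The plan is to deduce the statement from Theorem~\ref{thm:free-prod}, which reduces it to showing that each individual $H_i$ is profinitely rigid in $F$. I would first enlarge the decomposition to $F=A_1*\dots*A_k*C$ with the trivial subgroup placed in the factor $C$; the trivial subgroup is rigid in every ambient group. For each $i$, Theorem~\ref{thm:universal}(i), applied to $A_i$ and its free complement in $F$, shows that rigidity of $H_i$ in $A_i$ is equivalent to rigidity in $F$. So it suffices to show that each $H_i$ is profinitely rigid in $A_i$. Once this is done, Theorem~\ref{thm:free-prod} gives rigidity of $H_1*\dots*H_k*1$ in $F$, and also in every free extension $F*P$.

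I would then check the five types one at a time.

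In case (a), $H_i=A_i$ is a free factor of $A_i$, so it is rigid by the corollary stating that free factors are profinitely rigid.

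In cases (b)--(e), $H_i$ is cyclic, generated by a word $w_i$. By Corollary~\ref{cor:word-cyclic} it is enough to know that $w_i$ is a profinitely rigid word in $A_i$ in the sense of Definition~\ref{def:word-rigid}. I would take these facts from the literature.
\begin{enumerate}
\item[(b)] $u^m$ in $F_1=\langle u\rangle$. This is elementary. The measure on $\mathbb Z/N$ induced by $u^m$ is uniform on $m(\mathbb Z/N)$, and this recovers $|m|$. The value $m'=-m$ is realised by the automorphism $u\mapsto u^{-1}$. It also follows from \cite{HMP}, as the $x^d$ family.
\item[(c)] $[u,v]^d$ in $F_2$. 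This is \cite{HMP}.
\item[(d)] The orientable surface word in $F_{2g}$. This is \cite{Wilton2}.
\item[(e)] The non-orientable surface word $z_1^2\cdots z_c^2$ in $F_c$. This is also \cite{Wilton2}.
\end{enumerate}

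The only real obstacle is bookkeeping, in two places. The first is that the cited results must be stated in exactly the rank of $A_i$; if a reference proves rigidity in a larger free group, Theorem~\ref{thm:universal} transfers it back down. The second is the edge cases. For $m_i=0$ and $d_i=0$ the subgroups would be trivial, and these are excluded by the hypotheses. For $c=1$, case (e) is a special case of (b).

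Apart from these points, the proof is just the assembly argument of Corollary~\ref{cor:assembly}. Type (a) factors, which are not words, are handled by Theorem~\ref{thm:free-prod} directly rather than through that corollary.
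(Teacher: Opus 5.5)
Your proposal is correct and follows the paper's proof. You reduce to rigidity of each $H_i$ inside $A_i$ via Theorem~\ref{thm:universal}, turn the known word-rigidity results into rigidity of the cyclic subgroups in cases (b)--(e) via Corollary~\ref{cor:word-cyclic}, and assemble with Theorem~\ref{thm:free-prod} (trivial subgroup in $C$, a step the paper leaves implicit) and Theorem~\ref{thm:universal}. The differences are cosmetic: you give an elementary argument for (b) where the paper cites \cite{HMP}, and for (d) and (e) the paper cites \cite{AF} together with \cite{Wilton2} rather than \cite{Wilton2} alone.
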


\begin{proof}
Each $A_i$ is a free factor of $F$, so Theorem~\ref{thm:universal} allows us to
test rigidity of $H_i$ inside $A_i$.  In case (a) the claim follows from
\cite{PP}.  In cases (b) and (c), after identifying
$A_i$ with $F_1$ or $F_2$, respectively, the generator of $H_i$ becomes
$x^{m_i}$ or $[x,y]^{d_i}$; these words are rigid by \cite{HMP}, and
Corollary~\ref{cor:word-cyclic} gives rigidity of the corresponding cyclic
subgroups.  Cases (d) and (e) follow in the same way from \cite{AF, Wilton2}.

The assertion for free extensions follows from Theorem~\ref{thm:universal}.
\end{proof}

\begin{corollary}\label{cor:powers-basis}
Let $H=\langle x_1^{a_1},\dots,x_n^{a_n}\rangle\le F_n$, where $x_1,\dots,x_n$ is
a free basis of $F_n$ and $a_1,\dots,a_n$ are nonzero integers.  Then $H$ is
profinitely rigid in $F_n$, and hence profinitely rigid in every free extension
$F_n*P$ with $P$ finitely generated free.
\end{corollary}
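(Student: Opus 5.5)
This follows from Corollary~\ref{cor:examples}, so the plan is to match the setup there. Take the internal free product decomposition
\[
  F_n=A_1*\dots*A_n,\qquad A_i=\langle x_i\rangle,
\]
with $C=1$, and set $H_i=\langle x_i^{a_i}\rangle\le A_i$. Each $A_i$ has rank one and generator $u_i=x_i$. Each $H_i$ is then of type (b) with $m_i=a_i\neq0$. Since $H=\langle x_1^{a_1},\dots,x_n^{a_n}\rangle$ is the internal free product $H_1*\dots*H_n$, Corollary~\ref{cor:examples} gives that $H$ is profinitely rigid in $F_n$ and in every free extension $F_n*P$.

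To make the argument self-contained modulo the cited inputs, I would unpack this as follows. First, the word $x^{a}$ is profinitely rigid in $F_1=\langle x\rangle$ by \cite{HMP}, and this can be checked directly. In $F_1\cong\mathbb Z$, Lemma~\ref{lem:abelian-invariant} shows that a measure-equivalent cyclic subgroup is $\langle x^{\pm a}\rangle$. The measure on $\mathbb Z/N$ distinguishes $x^a$ from $x^{-a}$ only up to the automorphism $x\mapsto x^{-1}$, which realises the sign. Second, Corollary~\ref{cor:word-cyclic} turns this into rigidity of the subgroup $\langle x_i^{a_i}\rangle$ in $A_i$. Third, Theorem~\ref{thm:universal} lifts this to rigidity in $F_n$. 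Finally, the forward direction of Theorem~\ref{thm:free-prod} assembles the factors into rigidity of $H$, and its last clause, equivalently Theorem~\ref{thm:universal}, handles the free extensions.

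There is no real obstacle. The only point needing care is confirming that $\langle x_1^{a_1},\dots,x_n^{a_n}\rangle$ is genuinely the internal free product of the $H_i$. This holds because the $H_i$ lie in the distinct free factors $A_i$, and subgroups of distinct free factors generate their free product. One could also remark that Corollary~\ref{cor:powers} is the case of the first $r$ basis elements, handled by adding the factor $C=\langle x_{r+1},\dots,x_n\rangle$.
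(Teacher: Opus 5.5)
Your proposal is correct and is the paper's own proof: apply Corollary~\ref{cor:examples} with $A_i=\langle x_i\rangle$, $H_i=\langle x_i^{a_i}\rangle$ and $C=1$, so that case (b) applies to every $i$. Your side remark is also valid: in $F_1$ the only possible generators of a measure-equivalent subgroup are $x^{\pm a}$, and both lie in the $\Aut(F_1)$-orbit of $x^{a}$. So rank-one rigidity can be checked directly, and \cite{HMP} is not needed for this input.
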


\begin{proof}
Apply Corollary~\ref{cor:examples} with $A_i=\langle x_i\rangle$,
$H_i=\langle x_i^{a_i}\rangle$ and $C=1$; case (b) applies to every $i$. 
\end{proof}

\begin{proof}[Proof of Corollary~\ref{cor:powers}]
For $r\le n$, the subgroup $\langle x_1^{m_1},\dots,x_r^{m_r}\rangle$ lies in the
free factor $F_r=\langle x_1,\dots,x_r\rangle$ of $F_n$, where it is profinitely
rigid by Corollary~\ref{cor:powers-basis}.  By Theorem~\ref{thm:universal}, it
remains profinitely rigid in $F_n=F_r*\langle x_{r+1},\dots,x_n\rangle$.
\end{proof}

\begin{corollary}\label{cor:ranks}
Let $n\ge2$ and let $r$ be an integer with $1\le r\le n$.  Then
$F_n$ contains a profinitely rigid subgroup $H$ of rank $r$ with
$\acl_{F_n}(H)=F_n$, $H\neq F_n$ and $[F_n:H]=\infty$.  It is profinitely rigid
in every free extension $F_n*P$.
\end{corollary}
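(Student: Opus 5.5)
The plan is to build $H$ from Corollary~\ref{cor:assembly} (equivalently Corollary~\ref{cor:examples}) by choosing a decomposition of $F_n$ into exactly $r$ nontrivial free factors. For each factor we pick an algebraic, non-primitive rigid word. Write $F_n=A_1*\dots*A_r$ with $C=1$, where $A_1,\dots,A_{r-1}$ have rank one and $A_r$ has rank $n-r+1\ge1$. In each rank-one factor $A_i=\langle u_i\rangle$ take $H_i=\langle u_i^{2}\rangle$, which is case~(b). For $A_r$ we need a rigid word that is algebraic in $A_r$ and generates a proper subgroup.

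\begin{itemize}
\item If $\rk A_r=1$ (that is, $r=n$), take $u_r^2$ again.
\item If $\rk A_r=c\ge2$, use case~(e): $z_1^2\cdots z_c^2$, which is rigid by \cite{Wilton2}. Case~(d) also works when $c$ is even, and case~(c) when $c=2$.
\end{itemize}

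Corollary~\ref{cor:examples} then gives profinite rigidity of $H=H_1*\dots*H_r$ in $F_n$ and in every $F_n*P$.

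Next, the rank. $H$ is an internal free product of $r$ infinite cyclic groups, so $\rk H=r$. For the algebraic closure, Corollary~\ref{cor:assembly} gives $\acl_{F_n}(H)=F_n$ as long as each generating word is algebraic in its factor. For $u_i^2$ in $\langle u_i\rangle$ this is clear: the only free factors of $\mathbb Z$ are $1$ and $\mathbb Z$. For $w=z_1^2\cdots z_c^2$ we need it to lie in no proper free factor of $A_r$. One route is Whitehead's cut-vertex criterion: the Whitehead graph of $w$ is a cycle through all $c$ letters, so it has no cut vertex. The alternative is to cite that surface words are algebraic, or to use Proposition~\ref{prop:invariants}-style abelianisation: $\pi(w)=(2,\dots,2)$, so $w$ is not primitive, but this alone does not give algebraicity. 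I expect this algebraicity check to be the main point needing care, and I would handle it by citing Whitehead's lemma.

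Finally, infinite index and $H\ne F_n$. Since $n\ge2$ and $\rk H=r$, I would use Schreier's formula. A finite-index subgroup of index $d$ in $F_n$ has rank $d(n-1)+1$. If $r<n$ this rank would exceed $r$ unless $d=1$, and $d=1$ is excluded because $H\ne F_n$. For $H\ne F_n$ itself, the image $\pi(H)$ lies in $2\mathbb Z^{n}$, so $\pi(H)\ne\mathbb Z^n$.

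When $r=n$ the formula only excludes $d\ge2$ together with the rank bound, since $n=d(n-1)+1$ forces $d=1$. This rank argument therefore covers all cases, because $H\ne F_n$ rules out $d=1$.
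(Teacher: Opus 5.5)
Your proof is correct, and its skeleton matches the paper's. The paper also splits a basis as $n=n_1+\dots+n_r$ and takes $H_i=\langle z_{i,1}^{2}\cdots z_{i,n_i}^{2}\rangle$, which is type (e) of Corollary~\ref{cor:examples}. Your decomposition is the special case $n_1=\dots=n_{r-1}=1$, $n_r=n-r+1$. The rigidity step, the rank count, the passage from factorwise algebraicity to $\acl_{F_n}(H)=F_n$ (Lemma~\ref{lem:acl-freeprod}, or equivalently Corollary~\ref{cor:assembly} with $C=1$), and the Nielsen--Schreier index argument all agree with the paper.

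The genuine difference is how you show that $w=z_1^{2}\cdots z_c^{2}$ is algebraic in $A_r$ for $c\ge2$.
\begin{itemize}
\item The paper argues through the one-relator quotient. If $w\in B$ with $A_r=B*D$ and $D\neq1$, then $A_r/\langle\langle w\rangle\rangle\cong(B/\langle\langle w\rangle\rangle_B)*D$. The left factor cannot be trivial, since otherwise $B$ has rank one and $w$ would be primitive, contradicting $\pi(w)=2(e_1+\dots+e_c)$. Hence the non-orientable surface group $N_c$ would be a nontrivial free product. This contradicts its one-endedness (Klein bottle group for $c=2$, hyperbolic surface group for $c\ge3$).
\item Your route uses Whitehead's cut-vertex lemma. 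The Whitehead graph of the cyclically reduced word $w$ is a single cycle through all $2c\ge4$ vertices $z_j^{\pm1}$. It is therefore connected with no cut vertex, so $w$ lies in no proper free factor.
\end{itemize}
Your check is a finite combinatorial verification that needs no surface geometry or theory of ends, so it is more elementary and self-contained. The paper's argument is more conceptual: it ties algebraicity of a non-primitive word to one-endedness of its one-relator quotient.

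Your uniform proof of $H\neq F_n$ via $\pi(H)\subseteq2\mathbb Z^{n}$ also removes the paper's case split between $r<n$ and $r=n$.

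One small slip in the index paragraph: for $r<n$, a subgroup of index $d$ has rank $d(n-1)+1\ge n>r$ for every $d\ge1$, not ``unless $d=1$''. The clean statement is that $r=d(n-1)+1$ together with $r\le n$ and $n\ge2$ forces $d=1$, that is $H=F_n$, which you have already excluded.
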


\begin{proof}
Write $n=n_1+\dots+n_r$ with every $n_i\ge1$; this is possible because
$1\le r\le n$.  Split a free basis of $F_n$ accordingly and take the internal
decomposition
\[
  F_n=A_1*\dots*A_r,\qquad
  A_i=\langle z_{i,1},\dots,z_{i,n_i}\rangle,
\]
with
\[
  H_i=\bigl\langle w_i\bigr\rangle,\qquad
  w_i:=z_{i,1}^{2}z_{i,2}^{2}\cdots z_{i,n_i}^{2}.
\]
Each $H_i$ is of type (e) in Corollary~\ref{cor:examples}, so
$H=H_1*\dots*H_r$ is profinitely rigid in $F_n$ and in every $F_n*P$.  Its rank
is $r$, since the $w_i$ freely generate $H$.

Next, each $H_i$ is algebraic in $A_i$.  For $n_i=1$, this says that the
non-trivial subgroup $\langle z_{i,1}^{2}\rangle$ of $A_i\cong\mathbb Z$ lies in
no proper free factor, which is clear since the only proper free factor of
$\mathbb Z$ is trivial.  For $n_i\ge2$, suppose $A_i=B*D$ with $w_i\in B$ and
$D\neq1$.  Then
\[
  A_i/\langle\langle w_i\rangle\rangle
  \cong\bigl(B/\langle\langle w_i\rangle\rangle_B\bigr)*D,
\]
a free product with $D\neq1$.  The other factor is nontrivial as well.  Indeed,
if $B/\langle\langle w_i\rangle\rangle_B=1$, then the abelianization of $B$ is
cyclic, so $B$ has rank one; in the resulting infinite cyclic group, $w_i$
normally generates only when it is a generator.  This would make $w_i$
primitive in $A_i$, contradicting the fact that its abelianization
$2(e_1+\cdots+e_{n_i})$ is not primitive in $\mathbb Z^{n_i}$.  A free product
of two nontrivial groups has more than one end.  But
$A_i/\langle\langle w_i\rangle\rangle$ is the non-orientable surface group
$N_{n_i}=\langle z_1,\dots,z_{n_i}\mid z_1^{2}\cdots z_{n_i}^{2}\rangle$, which
for $n_i=2$ is the Klein bottle group, virtually $\mathbb Z^{2}$ and hence
one-ended, and for $n_i\ge3$ is a closed hyperbolic surface group, again
one-ended.  This is a contradiction, so $H_i$ is algebraic in $A_i$.  By
Lemma~\ref{lem:acl-freeprod},
\[
  \acl_{F_n}(H)=\acl_{F_n}(H_1)*\dots*\acl_{F_n}(H_r)=A_1*\dots*A_r=F_n .
\]

If $r<n$ then $H\neq F_n$ on rank grounds.  If $r=n$ then every $n_i=1$ and
$H=\langle z_1^{2},\dots,z_n^{2}\rangle$, which is contained in the proper
subgroup $F_n^{\,2}[F_n,F_n]$ and so is not $F_n$.  Finally, if $H$ had finite
index $m$ in $F_n$ then Nielsen--Schreier would give $r=\rk H=1+m(n-1)$; since
$r\le n$ and $n\ge2$ this forces $m(n-1)\le n-1$, hence $m=1$ and $H=F_n$, a
contradiction.  So $[F_n:H]=\infty$.
\end{proof}

\subsection{Further questions: transitivity and the passage back to words}
\label{sec:back-to-words}

Theorem~\ref{thm:free-prod} converts rigid words placed in distinct free factors
into rigid subgroups, but it does not directly produce new rigid words: as soon
as at least two cyclic factors are non-trivial, their free product has rank at
least two, so Corollary~\ref{cor:word-cyclic} no longer applies. A natural
reverse mechanism would therefore be transitivity of rigidity along an
intermediate subgroup. This is not formal, because the measures defining
rigidity of $K$ in $H$ are induced from $\Hom(H,G)$, whereas those defining
rigidity of $K$ in $F$ are induced from $\Hom(F,G)$.

\begin{question}\label{q:transitivity}
Let $K\le H\le F$, where $F$ is a finitely generated free group and $H,K$ are
finitely generated. If $K$ is profinitely rigid in $H$ and $H$ is profinitely
rigid in $F$, must $K$ be profinitely rigid in $F$?
\end{question}

\begin{proposition}\label{prop:acl-intermediate}
Let $K\le H\le F$, where $F$ is a finitely generated free group and $H,K$ are
finitely generated, and suppose $\acl_F(K)\le H$. Then
\[
  \acl_H(K)=\acl_F(K).
\]
Consequently $K$ is profinitely rigid in $H$ if and only if it is profinitely
rigid in $F$. The same equivalence holds for setwise profinite rigidity.
\end{proposition}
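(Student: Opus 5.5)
The plan has two steps: first show that $A:=\acl_F(K)$ is a free factor of $H$ and equals $\acl_H(K)$, then apply Proposition~\ref{prop:reduce}(i) twice, once in $F$ and once in $H$.

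\emph{Step 1: the algebraic closures agree.} Assume $K\neq1$; otherwise every statement is trivial. Write $F=A*C$. Since $A\le H\le F$ and $A$ is a free factor of $F$, the Kurosh subgroup theorem applied to $H$ relative to the splitting $F=A*C$ exhibits $H\cap A=A$ as a free factor of $H$. This is exactly the argument used in the second half of Lemma~\ref{lem:acl-trans}.

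Once $A$ is known to be a free factor of $H$, two facts follow. First, $A$ is finitely generated, and $H$ is finitely generated by hypothesis. Second, Lemma~\ref{lem:acl-trans}, applied to the chain $K\le A\le H$ with $A$ a free factor of $H$, gives $\acl_H(K)=\acl_A(K)$. Applying the same lemma to $K\le A\le F$ gives $\acl_A(K)=\acl_F(K)=A$, the last equality because $K$ is algebraic in $A$. Hence $\acl_H(K)=A=\acl_F(K)$.

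\emph{Step 2: rigidity transfers through $A$.} By Proposition~\ref{prop:reduce}(i), $K$ is (marked or setwise) profinitely rigid in $F$ if and only if it is so in $\acl_F(K)=A$. For the same statement with $H$ in place of $F$, I need $H$ to be a finitely generated free group. It is a subgroup of the free group $F$, hence free by Nielsen--Schreier, and it is finitely generated by hypothesis. So Proposition~\ref{prop:reduce}(i) also applies in $H$ and says that $K$ is rigid in $H$ if and only if it is rigid in $\acl_H(K)=A$.

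Chaining the two equivalences through the common group $A$ gives the equivalence for both marked and setwise rigidity. This works because Theorem~\ref{thm:universal}, on which Proposition~\ref{prop:reduce}(i) rests, treats the two notions simultaneously.

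The only step needing care is the Kurosh argument that $A$ is a free factor of $H$. In general, the Kurosh decomposition of $H$ with respect to $F=A*C$ has factors of the form $H\cap gAg^{-1}$ for suitable double coset representatives $g$. I need to check that one may choose $g=1$ for the coset $H\cdot 1\cdot A$. Then the factor $H\cap A=A$ appears, and the remaining factors, together with a free part, give a complement. This is a standard choice of representatives, and the paper already uses it in Lemma~\ref{lem:acl-trans}, so I will cite it rather than reprove it.
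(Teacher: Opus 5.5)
Your proof is correct and follows the paper's argument. The same Kurosh step shows that $A=\acl_F(K)$ is a free factor of $H$, and rigidity is then transferred through the common algebraic closure $A$ via Theorem~\ref{thm:universal}, which you invoke in its packaged form Proposition~\ref{prop:reduce}(i). The only difference is cosmetic: you get $\acl_H(K)=A$ by citing Lemma~\ref{lem:acl-trans} for the chain $K\le A\le H$, whereas the paper reruns Kurosh on $A\le H=\acl_H(K)*D$, and that lemma's proof is exactly that argument.
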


\begin{proof}
Put $A=\acl_F(K)$. Since $A$ is a free factor of $F$ and
$A\le H\le F$, write $F=A*C$ and apply the Kurosh subgroup theorem to
$H\le A*C$. The factor corresponding to the double coset $H\cdot1\cdot A$ is
$H\cap A=A$, so $A$ is a free factor of $H$. Hence
$B:=\acl_H(K)\le A$.

Write $H=B*D$. Applying Kurosh again, now to $A\le H=B*D$, shows that
$B=A\cap B$ is a free factor of $A$. But $K$ is algebraic in
$A=\acl_F(K)$, and $B$ is a free factor of $A$ containing $K$; therefore
$B=A$. Thus $\acl_H(K)=\acl_F(K)$.

Apply Theorem~\ref{thm:universal} to $K$ inside $F$ and inside $H$. In either
the marked or setwise setting, rigidity is equivalent to rigidity in the
corresponding algebraic closure, which is $A$ in both ambient groups.
\end{proof}

\begin{corollary}\label{cor:conditional-surface}
Assume Question~\ref{q:transitivity} has a positive answer. Let $a,b\ge0$ with
$a+b\ge1$, and let all $d_i,m_j$ be non-zero integers. Then
\[
  [x_1,y_1]^{d_1}\cdots[x_a,y_a]^{d_a}
  z_1^{\,m_1}\cdots z_b^{\,m_b}\in F_{2a+b}
\]
is profinitely rigid. In particular, this would give profinite rigidity of all
words $x_1^{m_1}\cdots x_b^{m_b}$ and
$[x_1,y_1]^{d_1}\cdots[x_a,y_a]^{d_a}$ with non-zero exponents.
\end{corollary}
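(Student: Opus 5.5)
Assuming a positive answer to Question~\ref{q:transitivity}, I will realise the word $w=[x_1,y_1]^{d_1}\cdots[x_a,y_a]^{d_a}z_1^{m_1}\cdots z_b^{m_b}$ as a rigid word inside an intermediate subgroup $H$ that is itself rigid in $F=F_{2a+b}$. Concretely, I take
\[
  H:=\bigl\langle [x_1,y_1]^{d_1},\dots,[x_a,y_a]^{d_a},\,z_1^{m_1},\dots,z_b^{m_b}\bigr\rangle .
\]
This is exactly the free product of cyclic subgroups sitting in the distinct free factors $\langle x_i,y_i\rangle$ and $\langle z_j\rangle$ of $F$. By Corollary~\ref{cor:examples}, using type (c) for the commutator powers and type (b) for the powers $z_j^{m_j}$, the subgroup $H$ is profinitely rigid in $F$. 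Since the $a+b$ listed generators freely generate $H$, the group $H$ is free of rank $a+b$, and in that basis $K:=\langle w\rangle$ is generated by the product $e_1e_2\cdots e_{a+b}$ of the basis elements.

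The next step is to show that $K$ is profinitely rigid in $H$. The product of all basis elements of $H$ is a primitive word in $H$. Hence $K$ is a free factor of $H$, and the Corollary following Proposition~\ref{prop:invariants} (free factors are rigid) gives that $K$ is profinitely rigid in $H$. We need $w\ne1$ and $a+b\ge1$ here. Both hold because every exponent is non-zero and the generators lie in distinct free factors.

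With $K$ rigid in $H$ and $H$ rigid in $F$, the assumed transitivity gives that $K=\langle w\rangle$ is profinitely rigid in $F$. Corollary~\ref{cor:word-cyclic} then converts this into profinite rigidity of the word $w$. The two special families are the cases $a=0$ and $b=0$, after renaming the basis.

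There is no real obstacle once transitivity is assumed. The only points to check carefully are the following.
\begin{enumerate}
\item The free-product decomposition of $H$ really matches the ambient decomposition $F=\ast_i\langle x_i,y_i\rangle*\ast_j\langle z_j\rangle$ required in Corollary~\ref{cor:examples}.
\item Primitivity is used in $H$, not in $F$. The word $w$ itself is far from primitive in $F$, and this is why transitivity, rather than Proposition~\ref{prop:acl-intermediate}, is needed.
\end{enumerate}
Proposition~\ref{prop:acl-intermediate} cannot be used because $\acl_F(K)=F\not\le H$ in general.
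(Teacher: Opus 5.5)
Your proof is correct and follows essentially the same argument as the paper. You use the same intermediate subgroup $H$, obtain its rigidity from the free-product theorem, note that $h_1\cdots h_{a+b}$ is primitive in $H$, and then apply transitivity and Corollary~\ref{cor:word-cyclic}; the only difference is that you cite Corollary~\ref{cor:examples} (types (b) and (c)) where the paper cites Corollary~\ref{cor:assembly}, which comes to the same thing. Your side remark that $\acl_F(K)=F$ holds only typically, not always (for instance $z_1z_2\in F_2$ is primitive), but it plays no role in the argument.
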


\begin{proof}
Let
\[
  H=\mathop{\ast}_{i=1}^{a}\langle [x_i,y_i]^{d_i}\rangle
    *\mathop{\ast}_{j=1}^{b}\langle z_j^{m_j}\rangle
  \le F_{2a+b}.
\]
Each cyclic factor is profinitely rigid, by the rigidity of
$[x,y]^d$ and $z^m$ from \cite{HMP}, and Corollary~\ref{cor:assembly} therefore makes $H$
profinitely rigid in $F_{2a+b}$. If $h_1,\dots,h_{a+b}$ are the displayed free
generators of $H$, then $h_1\cdots h_{a+b}$ is primitive in $H$: it is obtained
from $h_1$ by a Nielsen transformation. Hence
$\langle h_1\cdots h_{a+b}\rangle$ is a free factor of $H$ and is profinitely
rigid there. Question~\ref{q:transitivity} would then make this cyclic subgroup
profinitely rigid in $F_{2a+b}$, and Corollary~\ref{cor:word-cyclic} gives the
claim for the displayed word.
\end{proof}

\section{Tuples and induced distributions}\label{sec:tuples}

We conclude with the tuple formulation.  It is logically independent of the
preceding arguments, but it identifies marked subgroup rigidity with rigidity of
any generating tuple and gives a profinite characterization of tuple
measure-equivalence.

Let $\Gamma$ be a finitely generated group.  For a finite group $G$, denote by
$\Hom(\Gamma,G)$ the set of homomorphisms $\Gamma\to G$ and by
$\Epi(\Gamma,G)$ the subset of surjective homomorphisms.  Given a $k$-tuple
$T=(\gamma_1,\dots,\gamma_k)\in\Gamma^{k}$, the evaluation map
\[
  \mathrm{ev}_T\colon\Hom(\Gamma,G)\longrightarrow G^{k},\qquad
  \varphi\mapsto(\varphi(\gamma_1),\dots,\varphi(\gamma_k)),
\]
pushes forward the uniform measure on $\Hom(\Gamma,G)$ to a probability measure
on $G^{k}$.  Concretely, for $g=(g_1,\dots,g_k)\in G^{k}$ define the evaluation
count
\[
  N_T(G;g):=\bigl|\{\varphi\in\Hom(\Gamma,G)\mid \varphi(\gamma_i)=g_i
  \text{ for all }i\}\bigr| .
\]
When $k=1$ and $T=(w)$ for a word $w\in F_n$, this recovers the word measure of
Section~\ref{sec:words}: the probability that $w_G$ takes a prescribed value
under a uniformly random homomorphism $F_n\to G$.

Two $k$-tuples $T,T'\in\Gamma^{k}$ \emph{induce the same measure} if
\[
  N_T(G;g)=N_{T'}(G;g)
\]
for every finite group $G$ and every $g\in G^{k}$.

\begin{definition}\label{def:tuple-rigid}
A $k$-tuple $T\in\Gamma^{k}$ is \emph{profinitely rigid in $\Gamma$} if every
$T'\in\Gamma^{k}$ inducing the same measure as $T$ on every finite group lies in
the same $\Aut(\Gamma)$-orbit as $T$, that is, $T'=\varphi(T)$ for some
$\varphi\in\Aut(\Gamma)$ acting coordinatewise.
\end{definition}

For $k=1$ and $\Gamma=F_n$, Definition~\ref{def:tuple-rigid} is exactly the
usual rigidity problem for words.  Automorphic equivalence of tuples in a free
group is decidable by Whitehead's algorithm \cite{Whitehead}; the question is
whether the finite-group measures determine the same orbits.

\subsection{A profinite characterisation of measure-equivalence}
\label{sec:tuple-char}

The following theorem extends \cite[Theorem 2.2]{HMP} from words to arbitrary
finite tuples.  We begin with notation.  For $T\in\Gamma^{k}$ and $g\in G^{k}$ set
\[
  \Hom_{(T,g)}(\Gamma,G):=\{\varphi\in\Hom(\Gamma,G)\mid\varphi(\gamma_i)=g_i\ \forall i\},
\]
\[
  \Epi_{(T,g)}(\Gamma,G):=\{\varphi\in\Epi(\Gamma,G)\mid\varphi(\gamma_i)=g_i\ \forall i\},
\]
and
\[
  \ImEpi_T(\Gamma,G):=\{\varphi(T)\in G^{k}\mid\varphi\in\Epi(\Gamma,G)\}.
\]
For a finite group $G$, define the normal subgroup
\[
  K_\Gamma(G):=\bigcap_{\varphi\in\Epi(\Gamma,G)}\ker(\varphi),
\]
with the convention $K_\Gamma(G)=\Gamma$ when $\Epi(\Gamma,G)=\varnothing$, and
set $Q_G:=\Gamma/K_\Gamma(G)$.  Since $\Epi(\Gamma,G)$ is finite, the map
$\gamma\mapsto(\varphi(\gamma))_{\varphi\in\Epi(\Gamma,G)}$ embeds $Q_G$ into the
finite power $G^{|\Epi(\Gamma,G)|}$; in particular $Q_G$ is finite.  Moreover
$K_\Gamma(G)$ is characteristic in $\Gamma$, since $\Aut(\Gamma)$ permutes
$\Epi(\Gamma,G)$.  Every finite quotient of $\Gamma$ isomorphic to $G$ has the
form $\Gamma/N$ with $K_\Gamma(G)\le N$, so the quotients $\{Q_G\}_G$ are
cofinal in the inverse system of all finite quotients of $\Gamma$.

\begin{theorem}\label{thm:tuple-char}
Let $\Gamma$ be a finitely generated group and let $T=(\gamma_1,\dots,\gamma_k)$
and $T'=(\delta_1,\dots,\delta_k)$ be elements of $\Gamma^{k}$.  Write
$\gamma_i,\delta_i$ also for their images in $\widehat\Gamma$ under $\iota$.
The following are equivalent.
\begin{enumerate}
\item There is a continuous automorphism $\Theta\in\Aut(\widehat\Gamma)$ with
      $\Theta(\gamma_i)=\delta_i$ for all $i=1,\dots,k$.
\item For every finite group $G$, the pushforward measures on $G^{k}$ induced by
      $T$ and $T'$ coincide.
\item For every finite group $G$ and every $g\in G^{k}$,
      $\bigl|\Epi_{(T,g)}(\Gamma,G)\bigr|=\bigl|\Epi_{(T',g)}(\Gamma,G)\bigr|$.
\item For every finite group $G$,
      $\ImEpi_T(\Gamma,G)=\ImEpi_{T'}(\Gamma,G)$ as subsets of $G^{k}$.
\item For every finite group $G$ there is an automorphism $f$ of
      $Q_G=\Gamma/K_\Gamma(G)$ such that
      $f(\gamma_iK_\Gamma(G))=\delta_iK_\Gamma(G)$ for all $i$.
\end{enumerate}
\end{theorem}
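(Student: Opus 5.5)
I would prove the cycle of implications (1)$\Rightarrow$(2)$\Rightarrow$(3)$\Rightarrow$(4)$\Rightarrow$(5)$\Rightarrow$(1), following the pattern of \cite[Theorem 2.2]{HMP} with words replaced by $k$-tuples.

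For (1)$\Rightarrow$(2), use the bijection $\Hom(\Gamma,G)\cong\Hom_{\mathrm{cont}}(\widehat\Gamma,G)$. Precomposition with $\Theta$ is a bijection of the latter set, and it carries the fibre of $\mathrm{ev}_{T'}$ over $g$ onto the fibre of $\mathrm{ev}_T$ over $g$. Hence $N_T(G;g)=N_{T'}(G;g)$. This is the argument of Lemma~\ref{lem:aut}, carried out at the profinite level.

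For (2)$\Rightarrow$(3), I would use the standard Möbius inversion over subgroups. Every $\varphi\in\Hom(\Gamma,G)$ is an epimorphism onto its image $U\le G$, so
\[
N_T(G;g)=\sum_{U\le G,\ g\in U^k}\bigl|\Epi_{(T,g)}(\Gamma,U)\bigr|.
\]
Induct on $|G|$. The term with $U=G$ is the equality to be proved, and each proper $U$ contributes counts that agree for $T$ and $T'$ by the inductive hypothesis. Subtracting gives the equality of epimorphism counts.

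(3)$\Rightarrow$(4) is immediate, since $\ImEpi_T(\Gamma,G)$ is exactly the set of $g$ with $|\Epi_{(T,g)}(\Gamma,G)|>0$.

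For (4)$\Rightarrow$(5), apply (4) to the finite group $Q_G$ itself, not to $G$. Let $\pi\colon\Gamma\to Q_G$ be the quotient map. Since $\pi(T)\in\ImEpi_T(\Gamma,Q_G)=\ImEpi_{T'}(\Gamma,Q_G)$, there is an epimorphism $\psi\colon\Gamma\to Q_G$ with $\psi(T')=\pi(T)$. The key point is that $\ker\psi\supseteq K_\Gamma(G)$: for each $\varphi\in\Epi(\Gamma,G)$, the map $\varphi$ factors through $\pi$, and composing with $\psi$ gives another epimorphism onto $G$. So $\ker(\bar\varphi\circ\psi)$ is among the kernels defining $K_\Gamma(G)$, and $\ker\psi=\bigcap_\varphi\ker(\bar\varphi\circ\psi)$. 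Hence $\psi$ descends to a surjection $Q_G\to Q_G$, which is an automorphism by finiteness. Its inverse $f$ sends $\gamma_iK_\Gamma(G)$ to $\delta_iK_\Gamma(G)$. One must also treat the convention $\Epi(\Gamma,G)=\varnothing$, where $Q_G$ is trivial and the statement is vacuous.

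For (5)$\Rightarrow$(1), I would use a compactness argument, which I expect to be the main technical step. For each $G$, let $X_G\subseteq\Aut(Q_G)$ be the nonempty finite set of automorphisms with the required property. Since $K_\Gamma(G)$ is characteristic, these sets need to form an inverse system. I would index over finite groups $G$ ordered by the relation ``$K_\Gamma(G_1)\le K_\Gamma(G_2)$''. This indexing is directed, because $K_\Gamma(G_1\times G_2)\le K_\Gamma(G_1)\cap K_\Gamma(G_2)$: the product map $\Gamma\to G_1\times G_2$ surjects onto some subgroup $S$, and each $\varphi\in\Epi(\Gamma,G_1)$ factors through a quotient of that image. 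More simply, I would take $G=Q_{G_1}\times Q_{G_2}$, or use the cofinality remark already made. The sets are compatible because any automorphism of $Q_{G_1}$ preserves the characteristic image of $K_\Gamma(G_2)$ whenever $K_\Gamma(G_1)\le K_\Gamma(G_2)$; this holds since $K_\Gamma(G_2)/K_\Gamma(G_1)$ is the intersection of kernels of epimorphisms $Q_{G_1}\to G_2$. An inverse limit of nonempty finite sets is nonempty, so choose a compatible family. By cofinality of the $Q_G$, this family defines $\Theta\in\Aut(\widehat\Gamma)=\varprojlim\Aut(Q_G)$ with $\Theta(\gamma_i)=\delta_i$.

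The delicate points are the directedness of this system and the verification that restriction maps $X_{G_1}$ into $X_{G_2}$.
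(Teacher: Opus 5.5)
Your proposal is correct and is essentially the paper's argument. The paper's proof only refers to the cycle of implications in \cite[Theorem~2.2]{HMP}, and your steps are that cycle transposed to $k$-tuples: precomposition with $\Theta$, M\"obius inversion over subgroups, applying (4) to $Q_G$ itself, and compactness over the $Q_G$.

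One correction in (5)$\Rightarrow$(1). The directedness claim $K_\Gamma(G_1\times G_2)\le K_\Gamma(G_1)\cap K_\Gamma(G_2)$ is false in general, and so is the variant with $Q_{G_1}\times Q_{G_2}$. For $\Gamma=\mathbb Z$ and $G_1=G_2=\mathbb Z/2$ there is no epimorphism onto the product, so $K_\Gamma(G_1\times G_2)=\Gamma\not\le 2\mathbb Z$ by the convention. Instead take $G=\Gamma/(K_\Gamma(G_1)\cap K_\Gamma(G_2))$, which is the image of $\Gamma$ in $Q_{G_1}\times Q_{G_2}$. For this $G$ one has $K_\Gamma(G)\le K_\Gamma(G_1)\cap K_\Gamma(G_2)$, because the quotient map is itself one of the epimorphisms; this is the cofinality remark you mention. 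Also index the system by the set of subgroups $K_\Gamma(G)$ rather than by the groups $G$, so that the ordering is a genuine partial order.
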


\begin{proof}
    The proof is similar to the proof of \cite[Theorem~2.2]{HMP}.
\end{proof}

In particular, two tuples in $F_n$ induce the same finite-group measures if
and only if they lie in the same $\Aut(\widehat{F_n})$-orbit.

\subsection{Tuples versus subgroups}\label{sec:tuple-vs-subgroup}

We now compare tuple rigidity with marked subgroup rigidity.  The forward
passage from a rigid tuple to its generated subgroup uses only finite
generation; residual finiteness is needed in the converse to recover relations
from the profinite completion.

\begin{proof}[Proof of Theorem~\ref{thm:tuples}]
Let $H=\langle w_1,\dots,w_k\rangle\le\Gamma$ and write $T=(w_1,\dots,w_k)$.

$(\Leftarrow)$  Suppose the tuple $T$ is profinitely rigid.  Let $L\le\Gamma$ be
finitely generated and $\alpha\colon H\to L$ an isomorphism identifying the
induced distributions.  Put $u_i=\alpha(w_i)\in L$, so that $(u_1,\dots,u_k)$
generates $L$.

We claim that $T=(w_1,\dots,w_k)$ and $T'=(u_1,\dots,u_k)$ induce the same
measure on every finite group $G$.  Fix $g=(g_1,\dots,g_k)\in G^{k}$.  If the
assignment $w_i\mapsto g_i$ extends to a homomorphism $h_g\colon H\to G$ then,
since the $w_i$ generate $H$,
\[
  N_T(G;g)=\bigl|\{\varphi\in\Hom(\Gamma,G)\mid\varphi|_H=h_g\}\bigr|
  =|\Hom(\Gamma,G)|\cdot\Pr\nolimits_{H,G}(h_g);
\]
otherwise $N_T(G;g)=0$.  The assignment $u_i\mapsto g_i$ extends to a
homomorphism $h'_g\colon L\to G$ if and only if $h_g$ exists, in which case
$h'_g=h_g\circ\alpha^{-1}$; and the hypothesis on $\alpha$ gives
$\Pr_{H,G}(h_g)=\Pr_{L,G}(h'_g)$.  Hence $N_T(G;g)=N_{T'}(G;g)$, proving the
claim.  By profinite rigidity of $T$ there is $\varphi\in\Aut(\Gamma)$ with
$\varphi(w_i)=u_i=\alpha(w_i)$ for all $i$, so $\varphi|_H=\alpha$ and
$\varphi(H)=L$.  Therefore $H$ is profinitely rigid.  This direction uses no
hypothesis on $\Gamma$ beyond finite generation.

$(\Rightarrow)$  Suppose $H$ is profinitely rigid.  Let
$(u_1,\dots,u_k)\in\Gamma^{k}$ induce the same measures as $T$ on all finite
groups, and set $H'=\langle u_1,\dots,u_k\rangle$.  By
Theorem~\ref{thm:tuple-char} there is $\Theta\in\Aut(\widehat\Gamma)$ with
$\Theta(\iota(w_i))=\iota(u_i)$ for all $i$.  Every relation
$r(w_1,\dots,w_k)=1$ holding in $H$ then yields
\[
  \iota\bigl(r(u_1,\dots,u_k)\bigr)
  =\Theta\bigl(\iota(r(w_1,\dots,w_k))\bigr)=1,
\]
and since $\Gamma$ is residually finite $\iota$ is injective, so
$r(u_1,\dots,u_k)=1$; applying the same argument to $\Theta^{-1}$ gives the
  converse implication.  Hence $w_i\mapsto u_i$ extends to a group isomorphism
  $\alpha_0\colon H\xrightarrow{\ \sim\ }H'$.  The argument of the previous
  paragraph, read in reverse, shows that $H$ and $H'$ induce the same subgroup
  measures via $\alpha_0$: the mass of $h\in\Hom(H,G)$ is the normalized
  evaluation count at $(h(w_1),\dots,h(w_k))$, and the corresponding count for
  $h\circ\alpha_0^{-1}$ is equal by hypothesis.  By profinite rigidity of $H$ there is
$\varphi\in\Aut(\Gamma)$ with $\varphi|_H=\alpha_0$, so $\varphi(w_i)=u_i$ for
all $i$.  Hence the two tuples are automorphic in $\Gamma$.  Residual finiteness
was used only in this paragraph.
\end{proof}

The case $k=1$ recovers Corollary~\ref{cor:word-cyclic}, so Shalev's
conjecture is precisely the rank-one case of marked subgroup rigidity.  The
same dictionary transfers the preceding subgroup results to generating tuples;
for example:

\begin{corollary}\label{cor:freefactor-tuple}
If $(w_1,\dots,w_k)$ is a tuple in $F_n$ whose generated subgroup is a free
factor of $F_n$, then the tuple is profinitely rigid.
\end{corollary}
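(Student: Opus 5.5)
The plan is to pass through Theorem~\ref{thm:tuples}, which identifies rigidity of a generating tuple with marked rigidity of the subgroup it generates. Let $T=(w_1,\dots,w_k)$ and put $H=\langle w_1,\dots,w_k\rangle$. By hypothesis $H$ is a free factor of $F_n$. Since $F_n$ is finitely generated and residually finite, Theorem~\ref{thm:tuples} applies with $\Gamma=F_n$. It reduces the claim to showing that $H$ is profinitely rigid in $F_n$ in the marked sense of Definition~\ref{def:rigid}.

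That marked rigidity is already available from the corollary following Example-free Proposition~\ref{prop:invariants}, namely the statement that every free factor is setwise profinitely rigid and profinitely rigid. Let me recall the chain so the argument is self-contained.

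First, Proposition~\ref{prop:invariants}(iii) and (i) show that any $L$ measure equivalent to $H$ is a free factor of the same rank. Such an $L$ lies in $\Aut(F_n)\cdot H$, which gives setwise rigidity.

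Second, Corollary~\ref{cor:ff-setwise-marked} upgrades setwise rigidity to marked rigidity. The reason is that for a free factor $F_n=H*H'$, every $\gamma\in\Aut(H)$ extends to $\gamma*\mathrm{id}_{H'}$. Hence $\rho_H$ is surjective, and condition~(ii) of Theorem~\ref{thm:setwise-marked} holds.

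Applying the ``some (equivalently, every) generating tuple'' clause of Theorem~\ref{thm:tuples} to the particular tuple $T$ then yields that $T$ is profinitely rigid.

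The one point I expect to need care is that $T$ need not be a free basis of $H$. The $w_i$ may be redundant, trivial, or satisfy relations, since any generating tuple is allowed. Theorem~\ref{thm:tuples} is stated for arbitrary ordered generating tuples, so this should be harmless. Still, I would check that its proof does not silently assume $k=\rk H$.

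In the direction $(\Rightarrow)$ used here, a tuple $(u_i)$ inducing the same measures gives $\Theta\in\Aut(\widehat{F_n})$ with $\Theta(w_i)=u_i$. Residual finiteness then transports every relation among the $w_i$ to the $u_i$ and back. So $w_i\mapsto u_i$ defines an isomorphism $\alpha_0\colon H\to H'$, and this step is independent of whether the tuple is free.

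The degenerate case $H=1$, where all $w_i=1$, is immediate: measure equality forces $u_i=1$ by residual finiteness, and the identity automorphism witnesses rigidity.
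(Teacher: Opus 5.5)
Your proposal is correct and follows the paper's route: Theorem~\ref{thm:tuples} reduces the claim to marked profinite rigidity of the free factor $H$. The paper simply cites \cite{PP} for that rigidity, whereas you spell out the internal chain (Proposition~\ref{prop:invariants}(i),(iii) followed by Corollary~\ref{cor:ff-setwise-marked}), which makes the setwise-to-marked upgrade explicit. Your checks on non-free generating tuples and on the case $H=1$ are also sound.
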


\begin{proof}
Free factors of $F_n$ are profinitely rigid \cite{PP}, so the claim follows from
Theorem~\ref{thm:tuples}.
\end{proof}

Likewise, every rigid subgroup constructed in
Section~\ref{sec:new-families} yields rigid generating tuples of the
corresponding rank.

\section*{Acknowledgements}

The author acknowledges support from the Department of Atomic Energy,
Government of India, under project no.~RTI4019.


\begin{thebibliography}{99}


\bibitem{AmitVishne} A. Amit and U. Vishne,
\emph{Characters and solutions to equations in finite groups},
J. Algebra Appl. \textbf{10} (2011), no.~4, 675--686.

\bibitem{AF} D. Ascari and J. Fruchter,
\emph{Virtual homological torsion in graphs of free groups with cyclic
edge groups}, preprint, arXiv:2505.20960 (2025).

\bibitem{GJ} A. Garrido and A. Jaikin-Zapirain,
\emph{Free factors and profinite completions},
Int. Math. Res. Not. IMRN (2023), no.~24, 21320--21345.

\bibitem{Hall} M. Hall, Jr.,
\emph{Coset representations in free groups},
Trans. Amer. Math. Soc. \textbf{67} (1949), no.~2, 421--432.

\bibitem{HMP} L. Hanany, C. Meiri, and D. Puder,
\emph{Some orbits of free words that are determined by measures on
finite groups}, J. Algebra \textbf{555} (2020), 305--324.

\bibitem{JSZ} A. Jaikin-Zapirain, H. Souza, and P. Zalesski,
\emph{Profinite detection of free products and free factors},
preprint, arXiv:2603.16674 (2026).

\bibitem{MVW} A. Miasnikov, E. Ventura, and P. Weil,
\emph{Algebraic extensions in free groups},
in Geometric Group Theory, Trends Math.,
Birkh\"auser, Basel, 2007, 225--253.

\bibitem{PP} D. Puder and O. Parzanchevski,
\emph{Measure preserving words are primitive},
J. Amer. Math. Soc. \textbf{28} (2015), no.~1, 63--97.

\bibitem{RZ} L. Ribes and P. Zalesskii,
\emph{Profinite Groups}, 2nd ed.,
Ergeb. Math. Grenzgeb. (3), vol.~40,
Springer, Berlin, 2010.

\bibitem{Shalev} A. Shalev,
\emph{Some results and problems in the theory of word maps},
in Erd\H{o}s Centennial
(L. Lov\'asz, I. Z. Ruzsa, and V. T. S\'os, eds.),
Bolyai Soc. Math. Stud., vol.~25,
Springer, Berlin--Heidelberg, 2013, 611--649.

\bibitem{Whitehead} J. H. C. Whitehead,
\emph{On equivalent sets of elements in a free group},
Ann. of Math. (2) \textbf{37} (1936), no.~4, 782--800.

\bibitem{Wilton} H. Wilton,
\emph{Essential surfaces in graph pairs},
J. Amer. Math. Soc. \textbf{31} (2018), no.~4, 893--919.


\bibitem{Wilton2} H. Wilton,
\emph{On the profinite rigidity of surface groups and surface words},
C. R. Math. Acad. Sci. Paris \textbf{359} (2021), no.~2, 119--122.

\end{thebibliography}
\end{document}